\newtheorem{theorem}{Theorem}
\newtheorem{lemma}[theorem]{Lemma}
\newtheorem{claim}[theorem]{Claim}
\theoremstyle{definition} 
\newtheorem{definition}{Definition}
\theoremstyle{remark}
\numberwithin{equation}{section}
\numberwithin{theorem}{section}
\numberwithin{example}{section}
\numberwithin{definition}{section}
\numberwithin{figure}{section}
\newcommand{\argmax}{\mathop{\mathrm{argmax}}}    
\newcommand{\argmin}{\mathop{\mathrm{argmin}}}    
\newcommand{\barPhi}{\bar{\Phi}}
\newcommand{\frakz}{\mathfrak{z}}
\newcommand{\frakc}{\mathfrak{c} } 
\newcommand{\frake}{\mathfrak{e} }
\newcommand{\cA}{\mathcal{A}}
\newcommand{\cS}{\mathcal{S}}
\newcommand{\cM}{\mathcal{M}}
\newcommand{\cJ}{\mathcal{J}}
\newcommand{\cC}{\mathcal{C}}
\newcommand{\cE}{\mathcal{E}}
\newcommand{\cI}{\mathcal{I}}
\newcommand{\cK}{\mathcal{K}}
\newcommand{\cP}{\mathcal{P}}
\newcommand{\cN}{\mathcal{N}}
\newcommand{\Hf}{H}
\newcommand{\bG}{{\bf G}}
\newcommand{\bZ}{{\bf Z}}
\newcommand{\bX}{{\bf X}}
\newcommand{\bx}{{\bf x}}
\newcommand{\boldi}{{\bf i}}
\newcommand{\var}{\text{var}}
\newcommand{\bR}{\mathbb{R}}
\newcommand{\bN}{\mathbb{N}}
\newcommand{\barS}{{\bar{S}}}
\newcommand{\barxi}{{\bar{\xi}}}
\newcommand{\bE}{\mathbb{E}}
\newcommand{\bEH}{\mathbb{E}_{H}}
\newcommand{\bEIH}{\mathbb{E}_{I, H}}
\newcommand{\PH}{P_{H}}
\newcommand{\hattheta}{\hat{\theta}}
\newcommand{\figref}[1]{Figure~\ref{fig:#1}}
\newcommand{\secref}[1]{Section~\ref{sec:#1}}
\newcommand{\secsref}[1]{Sections~\ref{sec:#1}}
\newcommand{\secssref}[1]{\ref{sec:#1}}
\newcommand{\appref}[1]{Appendix~\ref{app:#1}}
\newcommand{\appsref}[1]{Appendices~\ref{app:#1}}
\newcommand{\appssref}[1]{\ref{app:#1}}
\newcommand{\defref}[1]{Definition~\ref{def:#1}}
\newcommand{\clmref}[1]{Claim~\ref{clm:#1}}
\newcommand{\clmsref}[1]{Claims~\ref{clm:#1}}
\newcommand{\lemref}[1]{Lemma~\ref{lem:#1}}
\newcommand{\lemsref}[1]{Lemmas~\ref{lem:#1}}
\newcommand{\lemssref}[1]{\ref{lem:#1}}
\newcommand{\thmref}[1]{Theorem~\ref{thm:#1}}
\title[]{Singularity-agnostic incomplete U-statistics for  testing polynomial constraints in Gaussian covariance matrices}
\author[D.~Leung]{Dennis Leung} 
\address{University of Melbourne}
\email{dennis.leung@unimelb.edu.au}
\author[N.~Sturma]{Nils Sturma} 
\address{Technical University of Munich}
\email{nils.sturma@tum.de}
\begin{document}

\begin{abstract}

  Testing the goodness-of-fit of a model with its defining functional
%  (polynomial, or model invariant?) 
  constraints in the parameters could date back to \citet{spearman1927}, who analyzed  the famous ``tetrad"  polynomial in the covariance matrix of the observed  variables in a single factor model. Despite its long history,  the  Wald test typically employed to operationalize this approach could produce very inaccurate test sizes in many situations, even when the regular conditions  for the classical normal asymptotics are met and  a very large sample  is available.

 Focusing on testing a polynomial constraint in a Gaussian covariance matrix, we obtained a new   understanding of this  baffling phenomenon: When the null hypothesis is true but ``near-singular", the standardized Wald test exhibits slow weak convergence, owing to the sophisticated dependency structure inherent to the underlying U-statistic  that ultimately drives its limiting distribution; this can also be rigorously explained by a key ratio of moments encoded in the 
  Berry-Esseen bound quantifying the normal approximation error involved. As an  alternative, we advocate the use of an \emph{incomplete} U-statistic to mildly tone down the dependence thereof and render the speed of convergence agnostic to the singularity status of the hypothesis. In parallel,  we develop a Berry-Esseen bound that is mathematically descriptive of the singularity-agnostic nature of our standardized incomplete U-statistic, using some of the finest exponential-type inequalities  in the literature.

\end{abstract}

\keywords{U-statistics, Bernstein's inequality, Stein's method,   Berry-Esseen bounds, hypercontractivity,  decoupling inequalities,  sub-Weibull random variables, algebraic statistics}

\subjclass[2000]{}

\maketitle

\section{Introduction}

\subsection{Background} \label{sec:bkground}
We consider  testing a hypothesis   defined by  a  polynomial equality constraint in the covariance matrix $\Theta =  (\theta_{uv})_{1 \leq u, v \leq p}$ of a  $p$-variate centered Gaussian random vector ${\bf X} \sim \cN_p(0, \Theta)$. That is, for a  non-constant polynomial  $f: \mathbb{R}^{p+1 \choose 2} \rightarrow \mathbb{R}$ in the entries $\theta_{uv}$, we consider a hypothesis of the form
\begin{equation} \label{null_poly_contraint_eq}
H: f(\Theta) = 0;
\end{equation}
the domain  of  $f(\cdot)$ is $\mathbb{R}^{p+1 \choose 2}$ due to the symmetry of $\Theta$. To simplify our exposition, we will throughout assume that 
\begin{equation} \label{non_singular_assumption_Theta}
\Theta \text{ is strictly positive definite} \footnote{See \appref{misc_facts} for a discussion on relaxing this assumption.}.
\end{equation}
Testing a hypothesis like $H$ dates  back to the times of \citet{spearman1927} and \citet{wishart1928sampling}, who considered   constraints of  the form 
\begin{multline} \label{tetrad_forms}
\theta_{uv}\theta_{wz} - \theta_{uz}\theta_{vw} = 0, \quad \theta_{uz}\theta_{vw} - \theta_{uw}\theta_{vz} = 0 , \quad \theta_{uv}\theta_{wz} - \theta_{uw}\theta_{vz} = 0\\
 \text{ for all distinct indices }  u, v, w, z
\end{multline}
%\begin{equation} \label{tetrad}
%f_{tetrad}(\Theta) = \theta_{13} \theta_{24} - \theta_{14} \theta_{23}= 0,
%\end{equation}
in a covariance matrix $\Theta$ with $p \geq 4$.
%where $\Theta = (\theta_{uv})_{1 \leq u, v \leq p}$ is the covariance matrix of a single factor model with $p \geq 4$ observed normal random variables.
 These polynomials 
in the preceding display  are precisely the $2 \times 2$ minors of $\Theta$ that only involve its off-diagonal entries, which are also known as \emph{tetrads}  in the factor-analysis literature. It can be easily shown that if ${\bf X}$  comes from a one-factor model, which prescribes that the covariance matrix has the structure
\begin{equation} \label{one_factor_model}
\Theta = \Psi + L L^T 
\end{equation}
for a positive definite $p$-by-$p$ diagonal \emph{uniqueness} matrix $\Psi$ and a $p$-by-$1$ \emph{factor loading} matrix $L$ with entries in $\bR$, 
the tetrads must all vanish as in \eqref{tetrad_forms} \citep[Theorem 2.1]{Loera1995}. In fact, selected tetrads  in $\Theta$ also vanish if ${\bf X}$ comes from a Gaussian latent tree  \citep{shiers2016, leung2018algebraic}, which includes the one-factor model as a special case; hence,  the null hypothesis   in \eqref{null_poly_contraint_eq} with $f$ being a  tetrad   forms a basis to test the fit of such models.  For a  factor model with $s \geq 2$ factors,  besides the $(s+1) \times (s+1)$ ``off-diagonal" minors of $\Theta$, there are  other polynomials whose vanishing is also a necessary condition for the model to hold, such as the pentads for $s = 2$ \citep{kelley1935essential} and septads for $s=3$   \citep{DrtonSturmfelsSullivantPTRF2007}. More generally, 
 many  instances of testing similar hypotheses concerning polynomial constraints in Gaussian covariance matrices also arise in graphical modeling and  causal discovery;  we refer the readers to \citet{shafer1993generalization, bollen2000tetrad, chen2014testable, spirtes2000causation, drton2008moments, silva2006learning, treksullivant, chen2017identification} for a non-exhaustive list of references, which also serve as the primary motivations for this work. In what follows, let $\nabla f: \bR^{p+1 \choose 2} \rightarrow \bR^{p+1 \choose 2}$ be the gradient function of $f$; at the true covariance matrix $\Theta$, we  call the hypothesis $H$ 
\begin{enumerate}
\item  \emph{regular}, if $\nabla f(\Theta) \neq 0$, or
\item \emph{singular}, if $\nabla f (\Theta) = 0$.
\end{enumerate}
If the hypothesis $H$ is regular, but the Euclidean norm of its gradient $|\nabla f(\Theta)|$ is close to zero, we  describe $H$ as \emph{near-singular}.
 As common in the literature, we also refer to the true covariance parameter $\Theta$ as a singular point or singularity (with respect to its ambient space of strictly positive definite matrices) if $\nabla f (\Theta) = 0$.

% For a pair $1\leq u, v \leq p$, let $\hat{\theta}_{uv}: \bR^p \longrightarrow \bR$ be the function 
% \begin{equation} \label{uv_prod_fn}
%\hat{\theta}_{uv}(\bx) = x_u x_v
% \end{equation}
%that multiplies the $u$-th and $v$-th coordinates of a $p$-vector $\bx = (x_1, \dots, x_p)$.

Suppose we  observe independent and identically distributed (i.i.d.) copies
\begin{equation} \label{iid_data_sample}
\bX_1, \dots, \bX_n \sim_{i.i.d.} \cN_p(0, \Theta)
\end{equation}
that have the same distribution as $\bX$. We can compute the 
 sample covariance matrix  $\hat{\Theta}  \equiv  n^{-1}\sum_{i=1}^n \hat{\Theta}_i$, where $\hat{\Theta}_i \equiv (\hat{\theta}_{uv}(\bX_i))_{1 \leq u,v \leq p}$ and for a pair $1\leq u, v \leq p$,
 \begin{equation} \label{uv_prod_fn}
\hat{\theta}_{uv}(\bx) = x_u x_v
 \end{equation}
is a function that multiplies the $u$-th and $v$-th coordinates of a $p$-vector $\bx = (x_1, \dots, x_p)^T$.
%\[
%\hat{\Theta}
%%= (\hat{\theta}_{uv})_{1 \leq u, v \leq p}
% \equiv  n^{-1}\sum_{i=1}^n \hat{\Theta}_i  ,\\
%% = n^{-1} \sum_{i=1}^n \bX_i \bX_i^T,
%\]
%where  $\hat{\Theta}_i = (\hat{\theta}_{uv}(\bX_i))_{1 \leq u,v \leq p}$
We will use $V(\Theta)$ to denote the ${p \choose 2}$-by-${p \choose 2}$ covariance matrix of the Wishart-distributed outer product ${\bf X}{\bf X}^T$  treated as a ${p+1 \choose 2}$-vector of its upper diagonal entries; in particular, it has the structure
\begin{equation} \label{V_structure}
V(\Theta)_{uv, wz} = \theta_{u  w} \theta_{v  z} + \theta_{u  z} \theta_{v  w}  \text{ for any $1\leq  u \leq v \leq p$ and $1\leq  w \leq z \leq p$},
\end{equation}
and is itself strictly positive definite as a consequence of $\Theta$  being so under \eqref{non_singular_assumption_Theta}; see \citet[Proposition 8.2]{eatonmultistat}.  To test \eqref{null_poly_contraint_eq},  it is a standard practice to deploy the Wald test, which is based on the  Student-type statistic
 \begin{equation*} 
\hat{T}_f =  \frac{ \sqrt{n} f(\hat{\Theta})}{\sqrt{ (\nabla f (\hat{\Theta}))^T V(\hat{\Theta}) \nabla f (\hat{\Theta})}}
\end{equation*}
that estimates the standardized statistic
 \begin{equation} \label{wald_stat_standardized}
T_f =  \frac{ \sqrt{n} f(\hat{\Theta})}{\sqrt{ (\nabla f (\Theta))^T V(\Theta) \nabla f (\Theta)}}
\end{equation}
 normalized by the true limiting variance $ (\nabla f (\Theta))^T V(\Theta) \nabla f (\Theta)$.   In the regular case where  $\nabla f(\Theta) \neq 0$, it is well established by the classical theory that  $T_f$ converges weakly to the standard normal distribution $\cN(0, 1)$ under $H$, which also implies $\hat{T}_f$ has the same weak limit by Slutsky's theorem.  However,  when $H$ is singular,  $\hat{T}_f$ could take on  non-standard limiting distributions; for instance,  if all the off-diagonal entries of $\Theta$ are zero, then \citet{drton2016wald} and \citet{pillai2016unexpected}'s results imply that for any monomial $f(\Theta) = \theta_{u_1v_1}^{a_1}\dots \theta_{u_m v_m}^{a_m}$, where the $(u_i, v_i)$'s are distinct pairs of indices such that $u_i \leq v_i$  and $a_1, \dots, a_m$ are positive exponents,  the \emph{Wald statistic} $\hat{T}_f ^2$ has a null limiting distribution of $\frac{1}{(\sum_{r=1}^m a_r)^2 } \chi_1^2$. More generally, for constraints defined by smooth functions that are not necessarily polynomials,  \citet{gaffke1999asymptotic, gaffke2002asymptotic} obtained the asymptotic distribution of $\hat{T}_f ^2$ at singular points under second-order regularity conditions, and a thorough literature review on more  related studies 
%   \citet{Sargan, andrews1987asymptotic, Glonek1993}, 
 can be found in  \citet{dufour2017wald}.  The latter paper is also perhaps the most accomplished in relation to the context of our present work, as the authors have provided a full characterization of the  limiting distributions of $\hat{T}_f $ for  a polynomial constraint in the model parameters under different so-called \emph{singularity orders} of the hypothesis, as well as  stochastic upper bounds  for all possible limiting distributions.

\begin{figure}[t] \label{fig:warped_size}
\caption{The empirical test sizes (produced by $1000$ repeated experiments) of two types of statistics  with critical values calibrated based on  their  asymptotic null distribution  $\cN(0,1)$, plotted against various target nominal levels.  These statistics test the particular tetrad $f(\Theta) = \theta_{14}\theta_{23} - \theta_{13} \theta_{24}$, and  are computed with a Gaussian data sample of size $n = 100$  generated as in \eqref{iid_data_sample}, with a $4$-by-$4$ covariance matrix $\Theta$  having a one-factor structure as in \eqref{one_factor_model}. The entries in the loading matrix $L$ are  all taken to be $0.2$, with the  uniqueness matrix $\Psi$ picked so that the diagonal entries of $\Theta$ are $1$.
%; these translate into a near-singular hypothesis $H$ for the tetrad because the values $0.04$ on the off-diagonals of $\Theta$ make the gradient $\nabla f(\Theta)$ close to zero.  
$T_f$ and $\hat{T}_f$ are the  standardized and studentized Wald test statistics. With a computational budget of $N = 2n$, $\sqrt{n}U_{n, N}'/\sigma$ and $\sqrt{n}U_{n, N}'/\hat{\sigma}$ are the incomplete U-statistics defined in \eqref{icu_def}, respectively normalized by the true limiting variance $\sigma^2$ in \eqref{rescaling_factor} and a data-driven estimate $\hat{\sigma}^2$ of it. }
%\centering
\includegraphics[width=0.7\textwidth]{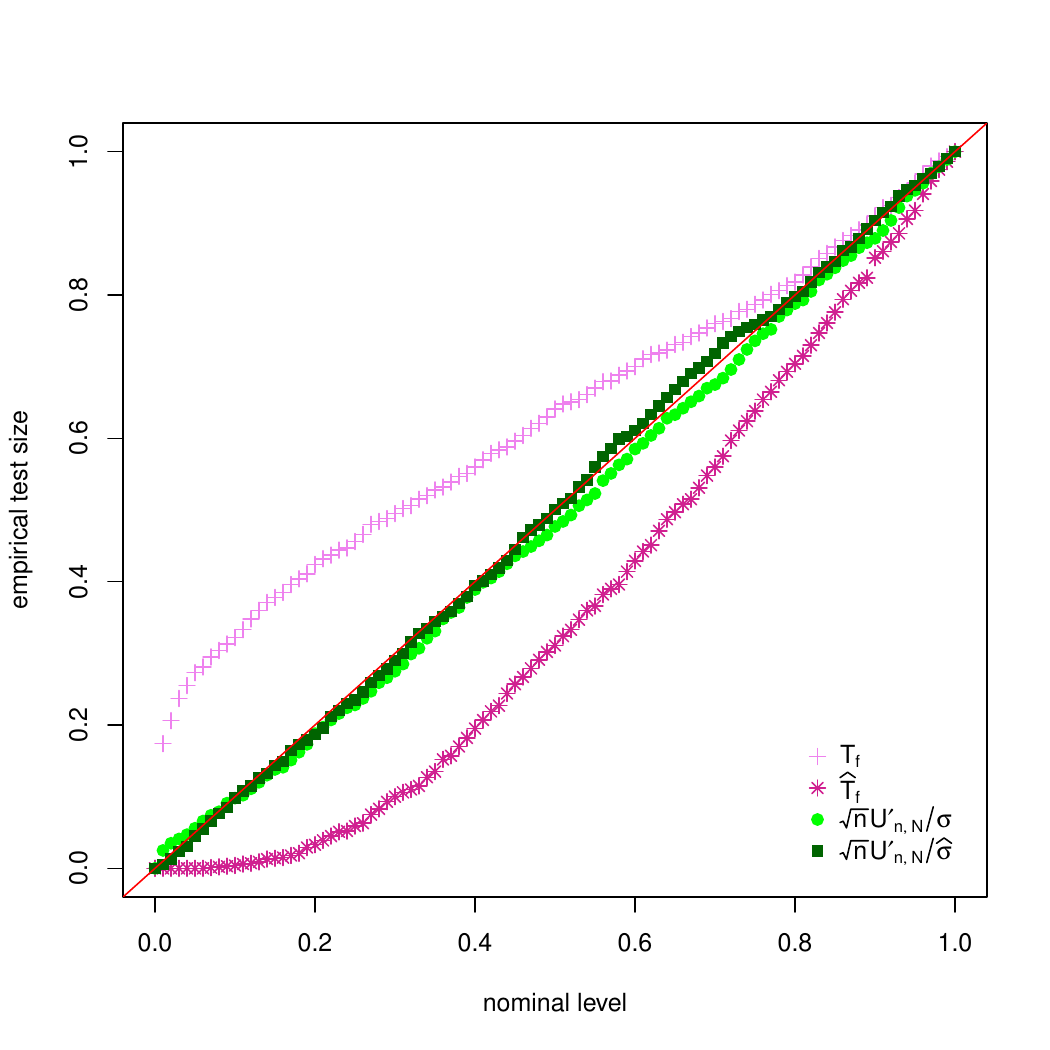}
\end{figure}

However,  most existing works only focus on pinning down the limiting null distributions of $\hat{T}_f$ at singularities, but have not paid much attention to the important scenarios where $H$ is near-singular, i.e. $H$ is regular but $|\nabla f(\Theta)| \approx 0$: While the classical theory guarantees that $T_f$, and hence $\hat{T}_f$, is normally distributed under the null \emph{in the limit},  the near-singularity  can render the weak convergence in question \emph{too slow} for $\cN(0, 1)$  to be accurate as a  reference null  distribution  for any reasonable sample size $n$. For instance, \figref{warped_size} shows that, under a near-singular one-factor model and a sample size of $100$ (which is already moderately large in practice),  for testing a tetrad,  the sizes of $T_f$ and $\hat{T}_f$ calibrated based on their asymptotic null distribution $\cN(0, 1)$ could be very ``off" from the target nominal levels. In fact, for this setting, the curve for $T_f$ doesn't align well with the $45^\circ$  line until $n$ is ramped up to around $400$, and that for $\hat{T}_f$ is still far from aligning.   Note that the one-factor model behind this figure translates into a near-singular $H$ because  the off-diagonal entries of $\Theta$ are $0.04$, which make the gradient vector $\nabla f(\Theta)$ close to zero. Such near-singular situations are arguably even more relevant than exactly singular situations, because unlike the set of singularities  $\{\Theta: \nabla f(\Theta) = 0\}$ which is confined to be a lower-dimensional subset of measure zero within the ambient space, covariance matrices $\Theta$ with the property $|\nabla f(\Theta)| \approx 0$ are prevalent occurrences. In addition, while  stochastic upper bounds on all the possible limiting null distributions of $\hat{T}_f^2$ have been developed in \citep[Theorem 4.1]{dufour2017wald}, in principle they can only be leveraged to offer asymptotically conservative tests in the limit and offer no indictions on how the test sizes  behave in finite samples. 
In light of all these considerations, it would be preferable to have an alternative test to Wald   that is not only \emph{singarity-agnostic} with respect to   the  \emph{limiting distribution}, but also  with respect to the \emph{convergent speed}, in the sense that both aspects are immune to the degree of singularity    of $H$, whether it be regular, nearly singular or exactly singular. In passing, we remark that    \citet{mitchell2019hypothesis} is the only work we are aware of which explicitly addresses near-singularity issues in testing, but their paper concerns the likelihood ratio test which is not our current subject matter. 
This brings us to our recent work  \citet{sturma2022testing} that proposed to use an \emph{incomplete U-statistic} with a high-dimensional symmetric kernel to test a model defined by many functional constraints; for instance, 
to test a one-factor model as in \eqref{one_factor_model}, each element of the kernel is constructed as an unbiased estimator of a given tetrad in \eqref{tetrad_forms}. The proposal was immediately inspired by the  works of \citet{chen2019randomized, SongChenKato2019} on normal approximation for high-dimensional incomplete U-statistics, which are in turn based on the seminal works of \citet{CCKHD13, CCKHD17} on sums of high-dimensional  independent vectors, but could also trace back to a one-dimensional  result in the 80's by \citet[Corollary 1]{janson1984asymptotic}. The latter result implies that, when suitably rescaled, an incomplete U-statistic with an appropriate \emph{computational budget} always converges weakly to the standard normal distribution, regardless of the underlying \emph{degeneracy} status of the kernel; see  \eqref{janson_convg} below. Hence, the main thrust of \citet{sturma2022testing} was to offer a singularity-agnostic test  \emph{with respect to the limiting distribution}. However, while  \citet{sturma2022testing} expanded upon the techniques of \citet{chen2019randomized, SongChenKato2019} to prove a \emph{Berry-Esseen-type}  bound for the accuracy of the high-dimensional normal approximation under  a ``mixed degeneracy" condition, the bound  there was not qualitatively refined enough to  describe another favorable attribute of the  incomplete U-statistic construction -- that it has shown every sign that it is also singularity agnostic \emph{with respect to the convergent speed}, by demonstrating stable and tight test size control in simulation studies. As an aside,  obtaining sharper high-dimensional Berry-Esseen bounds than those originating  in the pioneering works of \citet{CCKHD13, CCKHD17} is an active area of research;  see \citet{fangkoike2021, cckk2022, lopes2022, ccky_2023} for the  state-of-the-art. 

\subsection{New contributions}

In this paper, focusing on a  single polynomial constraint hypothesis on a normal covariance matrix as in \eqref{null_poly_contraint_eq}, we  offer new  insights into  testing  in the  presence of (near-)singularities. Precisely,  we will 
\begin{enumerate}
\item  unravel exactly  how  the limiting normal behavior of $T_f$ is plagued by its slow convergent speed  in the hairy near-singular scenarios, as measured by a Berry-Esseen bound for the underlying U-statistic (\secref{predicament}), and 
\item develop a refined Berry-Esseen (B-E) bound (\thmref{main}) capable of describing  the fact  that our  proposed incomplete U-statistic, when  normalized by its theoretical limiting variance,   is indeed singularity-agnostic \emph{with respect to the convergent speed} when the computational budget is suitably chosen  (\secref{main_results}). 
\end{enumerate}
As it will become  clear later,   these two endeavors complement each other, and the word ``convergent speed" above  encompasses more than just how the distributional approximation accuracy scales with the sample size $n$. In \figref{warped_size}, the two curves in light and dark green align nicely with the $45^\circ$  line, and they respectively  plot the test sizes of $\sqrt{n}U_{n, N}'/\sigma$ and $\sqrt{n}U_{n, N}'/\hat{\sigma}$, which are versions of our proposed incomplete U-statistic normalized by its theoretical limiting variance $\sigma^2$ and the practical data-driven Studentizer $\hat{\sigma}^2$ suggested in \citet[Section 2.3]{sturma2022testing},  respectively. While our main Berry-Esseen bound in \thmref{main} is proven for the former, we believe it sheds light on the excellent test-size control of the latter that we have experienced in practice, and is an important first step towards theoretically justifying   incomplete U-statistics as a strong singularity-agnostic choice for testing $H$. 
Its proof  builds upon the ``iterative conditioning" arguments pioneered by \citet{chen2019randomized, SongChenKato2019} for establishing incomplete U-statistic Berry-Esseen (B-E) bounds, but contains  a   game-changing innovation that differentiates from these prior works: Unlike the aforementioned  works where a    \emph{uniform} B-E bound for the \emph{complete U-statistic}  figures as a proof ingredient,  in order to 
elicit
 a final B-E bound for our incomplete U-statistic that is truly descriptive of its  singularity-agnostic nature,  we have instead used a    \emph{nonuniform} version to crucially tighten the arguments via ``completing the square" (\secref{pf_main_part_one}). 
 Moreover, to successfully pull off the said trick, the nonuniform bound itself amalgamates   some of the finest exponential-type inequalities  in quite an unexpected way, including the moment inequalities of \citet{gine2000exponential, adamczak2006moment}  for canonical U-statistics, the  recent  Bernstein inequalities for  sums of independent sub-Weibull random variables \citep{kuchibhotla2022moving, zhang2022sharper}, as well as the exponential randomized concentration inequalities in the Stein's method literature   \citep{shao2010icm, leung2023another, leung2023nonuniform}.

\subsection{Outline and notation} \label{sec:outline_notation}

\secref{tale} first recaps our proposed incomplete U-statistic in \citet{sturma2022testing}  for testing $H$. Following that, we theoretically decipher how near-singularity can adversely affect the convergent speed of the Wald test, and intuitively explains why one should expect our incomplete U-statistic to be a strong choice for testing $H$ that strikes a balance between being singularity-agnostic and efficient use of the data for power. \secref{main_results} presents our main result and the major parts of its proof. \secref{Bernstein_pf} establishes Bernstein-type tail bounds for the canonical projections in the Hoeffding decomposition of the complete U-statistic, which are useful for rendering the exponential-type nonuniform B-E bound used in  \secref{main_results}. \secref{discussion} concludes this work with a discussion of open issues.

In terms of notation, $P(\cdot)$, $\bE[\cdot]$ and  $\var[\cdot]$ are generic probability, expectation and variance operators, and $P_{H}(\cdot)$, $\bE_{H}[\cdot]$ and  $\var_{H}[\cdot]$ specifically denote these   taken under the null hypothesis $H$ in \eqref{null_poly_contraint_eq}. Given $L \in \{1, \dots, n\}$ and a function $t:(\bR^p)^L \rightarrow \bR$ in $L$  arguments in $\bR^p$, we may  use $\bE[t]$ as shorthand for $\bE[t(\bX_1, \dots, \bX_L)]$ for the data in \eqref{iid_data_sample}, or $\bE_{H}[t]$ if the expectation is taken under $H$. For any $q \geq 1$,  we use $\|Y\|_q = ( \mathbb{E}|Y|^q)^{1/q}$ to denote the $L_p$-norm of  any real-valued random variable $Y$, and $\|Y\|_{q, H}$ is  defined by an expectation taken under $H$. $|\cdot|$ is reserved for denoting the vector norm in the Euclidean space (as  in $|\nabla f(\Theta)|$ above).  Given any natural number $r \in \mathbb{N}$, we will use the shorthand $[r]$ for the set  $\{1, \dots r\}$; moreover, if $r \leq n$,  we let 
$I_{n, r} \equiv \{ (i_1, \dots, i_r): 1 \leq i_1 < \dots < i_r \leq n\}$. $\phi(\cdot)$ and $\Phi(\cdot)$ are respectively the standard normal density and distribution functions, and $\barPhi(\cdot) \equiv 1 - \Phi(\cdot)$; 
$\cN(\mu, v^2)$ and $\cN_r ({\boldsymbol \mu}, V)$  respectively denote the univariate normal distribution with mean $\mu$ and variance $v^2$ and the $r$-variate multvariate normal distributions with mean ${\boldsymbol \mu}$ and covariance matrix $V$.

 $C, c, C_1, c_1, C_2, c_2 \dots$   denote   absolute positive constants, where ``absolute" means they are universal for the statements they appear in; absolute constants likey $C(r)$, $c(r)$, $C_1(r)$, $c_1(r)$, etc. are used to emphasize their exclusive dependence on another quantity such as $r$.  These  constants generally differ in values at different occurences.
 
\section{Why incomplete U-statistics for $H$?} \label{sec:tale}

We shall first recap the construction of our proposed incomplete U-statistic in \citet{sturma2022testing} for testing $H$.

\subsection{Incomplete U-statistic for $H$} 

 Similarly to \citet{sturma2022testing}, a real polynomial $f$ in $\Theta$ of  degree $m$ is first written as 
\begin{equation} \label{form_of_f}
f(\Theta) = a_0 + \sum_{r=1}^m  \sum_{ 1 \leq u_i \leq v_i \leq p \text{ for } i \in [r] }a_{u_1 v_1, \dots, u_r   v_r}  \theta_{u_1 v_1} \dots \theta_{u_r v_r},
\end{equation}
where $a_0$ and  $a_{u_1v_1, \dots, u_r v_r} $'s are the real coefficients; this notation for a multivariate polynomial is somewhat less efficient than the usual multi-index notation because a pair $u_i \leq v_i$ may appear repeatedly in a given combination for $u_1 v_1, \dots, u_r  v_r$, but is adopted for the ease of defining our kernel functions below. 
Recall $\hat{\theta}_{uv}$ in \eqref{uv_prod_fn}, and let $\breve{h}_f: (\mathbb{R}^p)^m  \longrightarrow \mathbb{R}$
be the function in $m$ vector arguments ${\bf x}_1, \dots, {\bf x}_m \in \bR^p$ defined by
\begin{multline} \label{breve_H}
	\breve{h}_f({\bf x}_1, \dots, {\bf x}_m) =  a_0 + \sum_{r=1}^m  \sum_{  1 \leq u_i \leq v_i \leq p \text{ for } i \in [r] }a_{u_1v_1, \dots, u_r v_r} \hat{\theta}_{u_1 v_1}({\bf x}_1) \dots \hat{\theta}_{u_r v_r } ({\bf x}_r),
	\end{multline}
which forms an \emph{unbiased} estimator for $f(\Theta)$ upon  inserting the i.i.d. data from \eqref{iid_data_sample} as $\breve{h}_f({\bf X}_1, \dots, {\bf X}_m)$.
If $\cS_m$ is the set of all permutations of $(1, \dots, m)$, we can then symmetrize $\breve{h}_f$ with respect to its arguments by  defining the function
\begin{equation}\label{polynomial_kernel}
h({\bf x}_1, \dots, {\bf x}_m) = h_f({\bf x}_1, \dots, {\bf x}_m) = \frac{1}{m!} \sum_{\pi \in \cS_m} \breve{h}_f({\bf x}_{\pi_1}, \dots, {\bf x}_{\pi_m}),
\end{equation}
to serve as a \emph{kernel}  for  the U-statistic
\begin{equation} \label{complete_U_stat}
U_n = 
 {n \choose m}^{-1}\sum_{(i_1, \dots, i_m) \in I_{n, m}}  h({\bf X}_{i_1}, \dots, {\bf X}_{i_m}),
\end{equation}
which is also unbiased for $f(\Theta)$; for a classical reference on U-statistic theory, we refer the reader to \citet{korolyuk2013theory}.
 An important  function related to $h(\cdot)$ is the \emph{projection kernel} $g:\bR^p \longrightarrow \bR$ defined by 
 \begin{multline} \label{g_fn_structure}
g({\bf x}) \equiv \bE[h({\bf x}, {\bf X}_2, \dots, {\bf X}_m)] = a_0 +\\
  \frac{1}{m}\sum_{r=1}^{m} \;\ \sum_{j =1}^r  \;\ \sum_{ \substack{ 1 \leq u_i \leq v_i \leq p \\  \text{ for } i \in [r]} }  a_{u_1 v_1, \dots, u_r v_r }  \theta_{u_1 v_1} \dots \theta_{u_{j-1} v_{j-1}} \hat{\theta}_{u_j v_j} (\bx)\theta_{u_{j+1} v_{j+1}} \dots \theta_{u_r v_r}
\end{multline}
%\begin{multline*} 
%g({\bf x}) = g_f ({\bf x}) \equiv  \bE[H({\bf x}, {\bf X}_2, \dots, {\bf X}_m)] =  \\
%a_0 +   \frac{1}{m} \times \sum_{j=1}^m\Bigg\{ \sum_{r = 1}^{j-1} \sum_{ \substack{ 1 \leq u_1,  v_1, \dots, u_r, v_r \leq p:\\ 1 \leq u_i \leq v_i \leq p \text{ for } i \in [r]} } 
%a_{u_1v_1, \dots, u_r v_r} \theta_{u_1 v_1} \dots \theta_{u_r v_r }  \\
%+ \sum_{\substack{ 1 \leq u_1, v_1, \dots, u_j, v_j \leq p:\\ 1 \leq u_i \leq v_i \leq p \text{ for } i \in [j]}}  a_{u_1v_1, \dots, u_j v_j} \theta_{u_1 v_1} \dots \theta_{u_{j-1} v_{j-1}}  \hat{\theta}_{u_j v_j} ({\bf x})  \\
%+  \sum_{r=j+1}^m  \sum_{ \substack{ 1 \leq u_1,  v_1, \dots, u_r, v_r \leq p: \\ 1 \leq u_i \leq v_i \leq p \text{ for } i \in [r]} }a_{u_1v_1, \dots, u_r v_r} \theta_{u_1 v_1} \dots \theta_{u_{j-1} v_{j-1}} \hat{\theta}_{u_j v_j} ({\bf x})  \theta_{u_{j+1} v_{j+1}}\dots \theta_{u_r v_r} \Bigg\},
%\end{multline*}
%where it is implicitly understood that the summations of the forms $\sum_{r = 1}^0 (\cdot)$ and $\sum_{r = m+1}^m (\cdot)$, which    result from $j = 1$ and $j = m$,  equal   zero.
For an integer $N$ such that $1 \leq N \leq {n \choose m}$,  let  $\{Z_{(i_1, \dots, i_m)}\}_{(i_1, \dots, i_m) \in  I_{n, m}}$ be ${n \choose m}$ many independent and identically distributed Bernoulli random variables with success probability 
$
p_{n, N} \equiv N{n \choose m}^{-1}
$
such that 
\[
P(Z_{(i_1, \dots, i_m)} = 1) = 1- P(Z_{(i_1, \dots, i_m)} = 0) = p_{n, N};
\] 
moreover, these $Z_{(i_1, \dots, i_m)}$'s are also independent of the data ${\bf X}_1, \dots, \bX_n$ from \eqref{iid_data_sample}. 
In \citet[Section 4]{sturma2022testing}, we have proposed  using the 
\emph{incomplete} U-statistic under \emph{Bernoulli sampling}
\begin{equation}\label{icu_def}
U'_{n, N} =  \frac{1}{\hat{N}}\sum_{(i_1, \dots, i_m) \in I_{n, m}} Z_{(i_1, \dots, i_m)} h({\bf X}_{i_1}, \dots, {\bf X}_{i_m}),
\end{equation}
to test   \eqref{null_poly_contraint_eq},
where  
%${\bf X}_{\iota} = ({\bf X}_{\iota_1}, \dots, {\bf X}_{\iota_m})$ for  $\iota = (\iota_1, \dots, \iota_m)  \in I_{n, m}$ and 
$\hat{N} \equiv \sum_{(i_1, \dots, i_m) \in I_{n, m}} Z_{(i_1, \dots, i_m)}$ counts the actual number of $h({\bf X}_{i_1}, \dots, {\bf X}_{i_m})$ being summed over; we call $N$ the \emph{computational budget} that induces $\hat{N}$. 
A motivation of our proposal could trace back to a weak convergence result  in   \citet[Corollary 1]{janson1984asymptotic}, which implies that, if $N, n$ tend to $\infty$ in such a way that $\alpha_n \equiv n/N$ converges to a positive constant, then for $m \geq 2$,
 \begin{equation} \label{janson_convg}
\frac{\sqrt{n}U_{n, N}'}{\sigma} \longrightarrow_d \cN(0,   1) \text{ under } H,
%\footnote{some footnote on how Janson's result leads to that} 
\end{equation}
where $\sigma_g^2 \equiv \var_{H}[g({\bf X})] = \bEH[g^2]$, $\sigma_h^2 \equiv \var_{H}[h(\bX_1, \dots, \bX_m)] = \bEH[h^2]$ and
\begin{equation} \label{rescaling_factor}
\sigma^2 \equiv m^2  \sigma_g^2+  \alpha_n \sigma_h^2.
\end{equation}
With  careful calculations, one can check (\lemref{facts}$(i)$) that 
 \begin{equation} \label{sigma_g2_equal_wald_variance}
\sigma_g^2 = \frac{  (\nabla f (\Theta))^T V(\Theta) \nabla f (\Theta)}{m^2},
\end{equation}
 and hence 
 \begin{equation} \label{singular_condition_as_Ustat}
 \text{$H$ is singular if and only if $\sigma_g^2 = 0$}
 \end{equation} 
given that $V(\Theta)$ is strictly positive definite; on the contrary, unlike $\sigma_g$,  the standardizing factor $\sigma$ for $U_{n, N}'$
is always strictly positive, whether $H$ is singular or not, because it must be that $\sigma_h^2 > 0$  under our assumptions (\lemref{facts}$(ii)$).  That the weak limit  in \eqref{janson_convg} is always $\cN(0,1)$ suggests that a test of $H$ based on $U_{n, N}'$ is  singularity-agnostic \emph{with respect to the limiting distribution}. In practice, a data-driven variance estimate $\hat{\sigma}^2$  formed with a ``divide-and-conquer" strategy can be used in place of $\sigma^2$ to normalize $U_{n, N}'$; we refer to  \citet[Section 2.3]{sturma2022testing} for the details.
 
 However, the asymptotic result in \eqref{janson_convg} says nothing about its convergent speed, most naturally gauged by Berry-Esseen-type bounds. 
 To fully appreciate our main result for $U_{n, N}'$ (\thmref{main}) in that respect, we shall first explore why the Wald statistic  with normal quantiles as critical values cannot be expected to perform well when $\nabla f(\Theta) \neq 0$ but ``near singularities" hit, even the classical theory seemingly suggests otherwise by  guaranteeing its limiting normality.
 
% we also analyze  two other closely related candidate statistics for testing $H$, an independent sum and a complete U-statistic. As we explain later, they represent two extremes: The independent sum is singular-agnostic but is wasteful in terms of data usage, while the complete U-statistic fully utilizes the data but is not singularity agnostic with respect to its convergent rate. Our recently proposed statistic  is a middle ground between the two extreme, with our refined Berry-Esseen bound showing its singularity-agnostic nature that is characteristic of the  independent sum,  but more efficiently use the data like the complete U-statistic by virtue of summing across more combinations of the samples.  

 \subsection{The predicament of the Wald test} \label{sec:predicament}
To illustrate our point, we consider the  statistic $T_f$ in \eqref{wald_stat_standardized}
% \begin{equation} \label{wald_stat_standardized}
%T_f =  \frac{ \sqrt{n} f(\hat{\Theta})}{\sqrt{ (\nabla f (\Theta))^T V(\Theta) \nabla f (\Theta)}},
%\end{equation}
 normalized by the theoretical limiting variance $ (\nabla f (\Theta))^T V(\Theta) \nabla f (\Theta)$, whose convergence to normality under a regular $H$ is guaranteed by the classical theory \citep[Chapter 14.4]{TSH2022}. With simple calculations,  one can   show that
\begin{equation} \label{Wf_approx_by_Un}
T_f  = \frac{\sqrt{n}U_n }{m \sigma_g} + o_p(1),
\end{equation}
i.e. $T_f$ is  essentially the  U-statistic in \eqref{complete_U_stat} up to  a small term converging to zero in probability; see  \appref{leading_term_of_Wf}. Hence, in light of   \eqref{singular_condition_as_Ustat}, the normal weak limit of $T_f$ under a regular  $H$ is essentially driven by the  central limit theorem for complete U-statistics \citep[Theorem 4.2.1]{KB1994Ustat}, which states that 
\begin{equation} \label{complete_Ustat_convergence}
\frac{\sqrt{n}U_n }{m \sigma_g} \longrightarrow_d \cN(0, 1) \text{ when } \sigma_g \neq 0.
\end{equation}
 The normal approximation accuracy of \eqref{complete_Ustat_convergence} is typically gauged by the  Berry-Esseen bound for U-statistics  \citep[Theorem 10.3]{chen2010normal} as in 
\begin{equation} \label{BEustat}
\sup_{z \in \bR}\left| P_{H}\left(\frac{\sqrt{n}U_n}{m \sigma_g} \leq z\right) - \Phi(z)\right| \leq \frac{ (1 + \sqrt{2}) (m-1) \sigma_h}{\sqrt{ m (n - m + 1)}\sigma_g} + \frac{6.1 \bE_{H} [|g|^3]}{n^{1/2} \sigma_g^3 }.
\end{equation}
Our \emph{key observation}  is that the ratio of moments
\begin{equation} \label{Achilles_heel}
\frac{\sigma_h}{\sigma_g}
\end{equation}
figuring in the bound \eqref{BEustat} could be extremely large for the  kernel $h$ in \eqref{polynomial_kernel} when $H$ approaches being singular, so much so  that  the   factor $\sqrt{ m (n - m + 1)}$   in the denominator could fail to counter-balance it even with a  large $n$. When this happens, the  vacuous B-E bound in \eqref{BEustat} is actually informing  that the distribution of $\frac{\sqrt{n}U_n}{m \sigma_g}$, and hence that of $T_f$ via the first-order approximation in \eqref{Wf_approx_by_Un},  could be very poorly approximated by $\cN(0, 1)$. 
% as a consequence of what is known as  \emph{Isserlis' theorem} on mixed normal central moments \citep{isserlis1918formula, wick1950}
In what follows, we  shall use  Isserlis' theorem \citep{isserlis1918formula, wick1950}  on mixed normal central moments   to explain this phenomenon.

%Upon squaring $h$ in \eqref{polynomial_kernel}, 
%\[
%h^2(\bX_1, \dots, \bX_m) = \frac{1}{(m!)^2} \sum_{\pi \in S_m} \sum_{\gamma \in S_m} \breve{h}_f(\bX_{\pi_1}, \dots, \bX_{\pi_m})\breve{h}_f(\bX_{\gamma_1}, \dots, \bX_{\gamma_m}),
%\]
%and from  \eqref{breve_H}, one can see that each summand $\breve{h}_f(\bX_{\pi_1}, \dots, \bX_{\pi_m})\breve{h}_f(\bX_{\gamma_1}, \dots, \bX_{\gamma_m})$ is itself a sum of BLAH many terms each looking like
%\begin{multline*}
%a_{u_1v_1, \dots, u_r v_r} 
%a_{w_1z_1, \dots, w_s z_s}
%\hat{\theta}_{u_1 v_1}(\bX_{\pi_1}) \dots \hat{\theta}_{u_r v_r } (\bX_{\pi_r})
% \hat{\theta}_{w_1 z_1}(\bX_{\gamma_1}) \dots \hat{\theta}_{w_s z_s } (\bX_{\gamma_s})\\
% = a_{u_1v_1, \dots, u_r v_r} 
%a_{w_1z_1, \dots, w_s z_s}
%(\bX_{\pi_1 u_1} \bX_{\pi_1 v_1} \cdots  \bX_{\pi_r u_r} \bX_{\pi_r v_r})
%(\bX_{\gamma_1 w_1} \bX_{\gamma_1 z_1} \cdots  \bX_{\gamma_s w_s} \bX_{\gamma_s z_s})\\
%\text{ where } 1 \leq u_i \leq v_i \leq p \text{ for } i \in [r] \text{ and } 1 \leq w_j \leq z_j \leq p \text{ for } j \in [s]
%\end{multline*}

\begin{theorem}[Isserlis' theorem] \label{thm:isserlis}

For $r \in \mathbb{N}$, let  $Y_1, \dots, Y_{2r}$ be  jointly normal variables that are all centered. Then 
\begin{equation} \label{isserlis_expansion_generic}
\bE[Y_1 \cdots Y_{2r}] = \sum_{\cJ} \prod_{\ell = 1}^r \bE[Y_{u_\ell} Y_{v_\ell}],
\end{equation}
where the summation is over all  partitions $\cJ = \big\{\{u_1, v_1\}, \dots, \{u_r, v_r\}\big\}$ of $[2r] = \{1, \dots, 2r\}$ into disjoint pairs $\{u_\ell, v_\ell\}\in [2r]^2$; in  particular, there are $\ (2r)!/ (2^r r!)$ distinct pairings of $[2r]$. 
\end{theorem}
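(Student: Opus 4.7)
The plan is to prove \thmref{isserlis} by matching coefficients in the joint moment generating function (MGF). Set $\sigma_{ij} \equiv \bE[Y_i Y_j]$ and write $(t_1, \ldots, t_{2r}) \in \bR^{2r}$. Because $\sum_{i=1}^{2r} t_i Y_i$ is a centered Gaussian with variance $\sum_{i,j} t_i t_j \sigma_{ij}$, the joint MGF satisfies
\[
\bE\!\left[\exp\!\Big(\sum_{i=1}^{2r} t_i Y_i\Big)\right] \;=\; \exp\!\Big(\tfrac{1}{2}\sum_{i,j=1}^{2r} t_i t_j \sigma_{ij}\Big).
\]
Since each $Y_i$ has moments of every order, both sides extend to entire functions on $\mathbb{C}^{2r}$, so I can extract and compare the coefficient of the monomial $t_1 t_2 \cdots t_{2r}$ in their Taylor expansions unambiguously.

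On the left, Taylor-expanding the exponential as $\sum_{k \ge 0} (\sum_i t_i Y_i)^k/k!$ shows that only $k = 2r$ contributes to a monomial of total degree $2r$, and the multinomial coefficient attached to the multi-index $(1, 1, \ldots, 1)$ in the expansion of $(\sum_i t_i Y_i)^{2r}/(2r)!$ delivers exactly $\bE[Y_1 \cdots Y_{2r}]$. On the right, since each summand $t_i t_j \sigma_{ij}$ is quadratic in $t$, only the $k = r$ term of the exponential series can contribute to degree $2r$, and I would expand
\[
\frac{1}{r!\,2^r}\Big(\sum_{i,j} t_i t_j \sigma_{ij}\Big)^r \;=\; \frac{1}{r!\,2^r}\sum_{(i_1,j_1),\ldots,(i_r,j_r) \in [2r]^2} \prod_{\ell=1}^r t_{i_\ell}\, t_{j_\ell}\, \sigma_{i_\ell j_\ell}.
\]
The ordered tuples $((i_1,j_1), \ldots, (i_r,j_r))$ whose monomial equals $t_1 \cdots t_{2r}$ are precisely those for which $\{i_1, j_1, \ldots, i_r, j_r\}$ enumerates $[2r]$ with multiplicity one; each such tuple is obtained by choosing a perfect matching $\cJ = \{\{u_1,v_1\}, \ldots, \{u_r,v_r\}\}$ of $[2r]$, a block permutation ($r!$ options), and an intra-block orientation ($2^r$ options). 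These multiplicities cancel the prefactor $1/(r!\,2^r)$ exactly, leaving $\sum_\cJ \prod_\ell \sigma_{u_\ell v_\ell}$ as the coefficient on the right. Equating the two coefficients delivers the identity.

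The only non-mechanical step is the combinatorial bookkeeping in the preceding paragraph; as a sanity check, dividing the $(2r)!$ ordered sequences of distinct indices by these $r!\,2^r$ symmetries yields $(2r)!/(r!\,2^r)$ perfect matchings, matching the cardinality stated in the theorem. A second route I could take, which avoids formal power-series manipulation, is induction on $r$ via the Gaussian integration-by-parts identity $\bE[Y_1\, G(Y_2, \ldots, Y_{2r})] = \sum_{j=2}^{2r} \sigma_{1j}\, \bE[\partial_j G(Y_2, \ldots, Y_{2r})]$ applied to $G(y_2, \ldots, y_{2r}) = y_2 \cdots y_{2r}$: this produces $\bE[Y_1 \cdots Y_{2r}] = \sum_{j=2}^{2r} \sigma_{1j}\, \bE\big[\prod_{k \notin \{1,j\}} Y_k\big]$, and since every perfect matching of $[2r]$ uniquely partners the index $1$ with some $j \in \{2, \ldots, 2r\}$, the inductive hypothesis on the $(2r-2)$-fold product reassembles the claimed sum, with the base case $r = 1$ being tautological.
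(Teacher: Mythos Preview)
Your proof is correct. The paper, however, does not give its own proof of \thmref{isserlis}: it simply cites the result as \citet[Theorem~1.28]{jason_gauss_hilbert_space_1997} and uses it as a black box in the tetrad calculations of \secref{predicament}. So there is nothing to compare against on the paper's side; your MGF coefficient-matching argument (and the alternative Stein-identity induction you sketch) are both standard, complete derivations that go beyond what the paper supplies.
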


This result can be found in \citet[Theorem 1.28]{jason_gauss_hilbert_space_1997}, and we will call a summand in \eqref{isserlis_expansion_generic}   an \emph{Isserlis expansion term}. (Also note that the form  in \eqref{V_structure} can be derived from \thmref{isserlis}.) It is the easiest to illustrate our claimed phenomenon about the ratio in \eqref{Achilles_heel} with a prototypical example: the tetrad. In what follows, suppose we are to test the tetrad equality constraint  
\begin{equation} \label{specific_tetrad}
f(\Theta) = \theta_{uz}\theta_{vw} - \theta_{uw}\theta_{vz} = 0 \text{ for } u \neq v \neq w \neq z,
\end{equation}
 which has  
a corresponding degree-two kernel
\begin{equation} \label{Hun_for_tetrad}
h({\bf x}_1,{\bf x}_2) =
\frac{1}{2} \Bigg[
\hat{\theta}_{uz}(\bx_1) \hat{\theta}_{vw}(\bx_2)  -  \hat{\theta}_{uw}(\bx_1) \hat{\theta}_{vz}(\bx_2)+ 
\hat{\theta}_{vw}(\bx_1)  \hat{\theta}_{uz}(\bx_2)  -  \hat{\theta}_{vz}(\bx_1) \hat{\theta}_{uw}(\bx_2)
  \Bigg]
\end{equation} 
  from  \eqref{polynomial_kernel}. 
Moreover,  for simplicity, we further assume
 the covariance matrix $\Theta$ has a specific one-factor structure such that
\begin{equation} \label{equicorrelation_assump}
\theta_{uv}  =  \begin{cases} 
 1& \text{if } u = v;\\
 \rho   & \text{if } u \neq v,
  \end{cases}
\end{equation}
for a constant $\rho \in (0, 1)$, which makes the gradient of \eqref{specific_tetrad} take the form
\begin{equation} \label{tetrad_gradient}
\nabla f (\Theta)
% = \Big(\frac{\partial f}{\partial \theta_{uz}}, \frac{\partial f}{\partial \theta_{vw}}, \frac{\partial f}{\partial \theta_{uw}}, \frac{\partial f}{\partial \theta_{vz}}\Big)^T 
= (\theta_{vw}, \theta_{uz}, - \theta_{vz}, -\theta_{uw})^T = \rho (1, 1, - 1, - 1)^T;
\end{equation}
compare \eqref{one_factor_model}.  The following observations can be made for this example:

\subsubsection{First observation for the tetrad example \eqref{specific_tetrad}: $\sigma_g^2 \rightarrow 0$ as $|\nabla f(\Theta)| \rightarrow 0$} \label{sec:1st_observation}
This is just a simple consequence of the expression for $\sigma_g^2$ in  \eqref{sigma_g2_equal_wald_variance}, the form of the tetrad gradient \eqref{tetrad_gradient} and the structure of $V(\Theta)$ in \eqref{V_structure}, as $|\nabla f (\Theta)| \rightarrow 0$ is equivalent to $\rho \rightarrow 0$. 

 \subsubsection{Second observation for the tetrad example \eqref{specific_tetrad}: $\sigma_h^2 \rightarrow 1$ as $|\nabla f(\Theta)| \rightarrow 0$} \label{sec:2nd_observation}
Upon expanding $h^2({\bf X}_1, \bX_2)$, the variance quantity $\sigma_h^2$ can be seen as
\begin{multline} \label{sigmah_sq_expand}
\sigma_h^2 = \bE_H[h(\bX_1, \bX_2)^2] = \frac{1}{4} \times \Bigg\{ \bE \big[\hat{\theta}_{uz}(\bX_1)^2 \hat{\theta}_{vw}(\bX_2)^2 \big] +  \bE[\hat{\theta}_{uw}(\bX_1)^2 \hat{\theta}_{vz}(\bX_2)^2 ]+ \\
\bE[ \hat{\theta}_{vw}(\bX_1)^2 \hat{\theta}_{uz}(\bX_2)^2] 
+ \bE[ \hat{\theta}_{vz}(\bX_1)^2 \hat{\theta}_{uw}(\bX_2)^2 ] 
+ \underbrace{\cdots\cdots\cdots }_{\text{ $2 \cdot {4 \choose 2}$  expected cross terms}}\Bigg\},
 \end{multline}
where each expectation explicitly shown in the braces of \eqref{sigmah_sq_expand} is the square of one of the four terms
\begin{equation}\label{four_random_terms}
\hat{\theta}_{uz}(\bX_1) \hat{\theta}_{vw}(\bX_2), 
\quad  \hat{\theta}_{uw}(\bX_1) \hat{\theta}_{vz}(\bX_2),
 \quad  \hat{\theta}_{vw}(\bX_1) \hat{\theta}_{uz}(\bX_2)
  \text{ and }  \hat{\theta}_{vz}(\bX_1) \hat{\theta}_{uw}(\bX_2) 
\end{equation}
that come from plugging the data $(\bX_1, \bX_2)$ into the four additive terms in the bracket of \eqref{Hun_for_tetrad}, and the terms hidden in ``$\cdots \cdots \cdots$" are the expectations of the cross terms from multiplying any two terms  in \eqref{four_random_terms}. In particular, with \thmref{isserlis},  it is straight-forward to compute that
\begin{multline} \label{form_of_isserlis_expansion_term}
 \text{ each expectation  explicitly shown in \eqref{sigmah_sq_expand} must be evaluated as}\\
1 +  \underbrace{ \Big\{\substack{ \text{ $\frac{8!}{2^4 4!} - 1$ finitely many   additive terms,   each equal to  either $0$, $\rho^2$ or $\rho^4$} } \Big\}}_{\text{ summing to $4(\rho^4 + \rho^2)$}}.
\end{multline}
For instance, the first expectation $\bE[\hat{\theta}_{uz}(\bX_1)^2 \hat{\theta}_{vw}(\bX_2)^2]$ in \eqref{sigmah_sq_expand}
%$\bE[\hat{\theta}_{uz}(\bX_1)^2 \hat{\theta}_{vw}(\bX_2)^2]$
 can be computed with Isserlis' theorem as
\begin{equation} \label{show_exact_isserlis_term}
\bE[\hat{\theta}_{uz}(\bX_1)^2 \hat{\theta}_{vw}(\bX_2)^2]  = \bE[X_{1u}^2X_{1z}^2 X_{2v}^2X_{2w}^2]  =  \underbrace{ \bE[ X_{1u}^2  ] \bE[X_{1z}^2] \bE[ X_{2v}^2 ] \bE[X_{2w}^2]}_{ = 1\cdot 1 \cdot 1 \cdot 1 = 1} + \cdots
\end{equation}
under \eqref{equicorrelation_assump}, where ${\bf X}_i = (X_{i1}, \dots, X_{ip})^T$ for each $i \in [n]$;
%\begin{align*}
%&\bE[\hat{\theta}_{uz}(\bX_1)^2 \hat{\theta}_{vw}(\bX_2)^2]  = \bE[X_{1u}^2X_{1z}^2 X_{2v}^2X_{2w}^2]  =  \underbrace{ \bE[ X_{1u}^2  ] \bE[X_{1z}^2] \bE[ X_{2v}^2 ] \bE[X_{2w}^2]}_{ = 1\cdot 1 \cdot 1 \cdot 1 = 1} + \cdots, 
%&= \underbrace{ \bE[ X_{1u}^2  ] \bE[X_{1z}^2] \bE[ X_{2v}^2 ] \bE[X_{2w}^2]}_{ = 1} +  \Bigg\{ \frac{\binom{8}{2,2,2, 2}}{4!} - 1  \text{ many  additive terms,   each being one of $0$, $\rho^2$ or $\rho^4$} \Bigg\} \\
%&= \underbrace{ \bE[ X_{1u}^2  ] \bE[X_{1z}^2] \bE[ X_{2v}^2 ] \bE[X_{2w}^2]}_{ = 1} + \underbrace{ \bE[X_{1u}X_{1z}] \bE[ X_{1u}X_{1z}] \bE[ X_{2v}^2 ] \bE[X_{2w}^2]  }_{ = \rho \cdot \rho \cdot 1 \cdot 1 = \rho^2} \\
%&  \underbrace{ \bE[X_{1u}X_{1z}] \bE[ X_{1u}X_{1z}] \bE[ X_{2v} X_{2w} ] \bE[ X_{2v}  X_{2w}]  }_{ = \rho \cdot \rho \cdot \rho  \cdot \rho = \rho^4} + \underbrace{\bE[X_{1u} X_{2v}]  \bE[X_{1u} X_{2v}] \bE[X_{1z}^2] \bE[X_{2w}^2]}_{0 \cdot 0 \cdot 1 \cdot 1 =  0}\\
%& + \cdots \\
%&= 1 +  \Big\{\substack{ \text{ $\binom{8}{2,2,2, 2} \frac{1}{4!} - 1$ finitely many   additive terms,   each being either $0$, $\rho^2$ or $\rho^4$} } \Big\},
%\end{align*}
we have only  displayed the Isserlis expansion term that takes the value $1$ and corresponds to the pairwise partition where each of  the  sample indices (i.e.  $1$ or $2$) \text{ and }  variable indices (i.e. $u$, $v$, $w$ or $z$)  \emph{exactly pairs with itself}. The rest of the Isserlis expansion terms not displayed,  for which such self-pairings are not simultaneously satisfied,  must either be  $0$, $\rho^2$ or $\rho^4$ under the structure of $\Theta$ assumed in \eqref{equicorrelation_assump}, as described in \eqref{form_of_isserlis_expansion_term};  examples of such terms are
\begin{multline*}
 \bE[X_{1u}X_{1z}] \bE[ X_{1u}X_{1z}] \bE[ X_{2v}^2 ] \bE[X_{2w}^2]  =  \rho \cdot \rho \cdot 1 \cdot 1 = \rho^2, \\
 \bE[X_{1u}X_{1z}] \bE[ X_{1u}X_{1z}] \bE[ X_{2v} X_{2w} ] \bE[ X_{2v}  X_{2w}]  =  \rho \cdot \rho \cdot \rho  \cdot \rho = \rho^4 \text{ and }\\
 \bE[X_{1u} X_{2v}]  \bE[X_{1u} X_{2v}] \bE[X_{1z}^2] \bE[X_{2w}^2] = 0 \cdot 0 \cdot 1 \cdot 1 =  0,
 \end{multline*}
which result from different  configurations of the index pairings and can be shown as summing to $4(\rho^4 + \rho^2)$ by simple counting. At the same time, one can also  show that
\begin{multline} \label{cannot_be_bigger_than_rho}
\text{ each of the $2 \cdot {4 \choose 2}$ cross terms not explicitly displayed in \eqref{sigmah_sq_expand} must be} \\
\text{ a sum of
$\frac{8!}{2^4 4!}$ many terms, each with magnitude no larger than $\rho$};
\end{multline}
this is because simultaneous self-pairings of all sample and variable indices as in the first expansion term (with value $1$)  in \eqref{show_exact_isserlis_term} cannot occur. 
For instance,  taking the expectation of the cross term from multiplying the first two terms in \eqref{four_random_terms} and applying \thmref{isserlis} gives
\begin{align*}
\bE[\hat{\theta}_{uz}(\bX_1) \hat{\theta}_{vw}(\bX_2)\hat{\theta}_{uw}(\bX_1) \hat{\theta}_{vz}(\bX_2) ]
&=\bE [X_{1u}^2 (X_{1w} X_{1z} )X_{2v}^2(X_{2w}X_{2z})] \\
&= \underbrace{\bE [X_{1u}^2] \bE[ X_{1w} X_{1z} ] \bE[X_{2v}^2] \bE[X_{2w}X_{2z}]}_{ = 1\cdot \rho \cdot 1 \cdot \rho = \rho^2} + \cdots
\end{align*}
for which all the Isserlis expansion terms cannot form exact self-pairings of all the indices to give the value $1$. In light of both  facts in  \eqref{form_of_isserlis_expansion_term} and \eqref{cannot_be_bigger_than_rho}, one can then conclude from  \eqref{sigmah_sq_expand} that $\sigma_h^2 \rightarrow 1$ as $\rho \rightarrow  0$ (which is equivalent to $|\nabla f(\Theta)| \rightarrow 0$).

The observations in  \secsref{1st_observation} and \secssref{2nd_observation} have essentially shown the following: for the tetrad, the ratio $\sigma_h/\sigma_g$ in  \eqref{Achilles_heel} can be arbitrarily large as the hypothesis $H$ approaches being singular.  However, there is nothing special about this example which we have  picked only for its simplicity. The above phenomenon can also be observed for other  complicated polynomials, and can be similarly explained when applying Isserlis' theorem to evaluate $\sigma_h^2$ and $\sigma_g^2$; in short, ``simultaneous self-pairing" of indices prevents $\sigma_h$ from tending to zero even when the gradient $\nabla f (\Theta)$ (and hence $\sigma_g$) approaches zero.

\subsection{Incomplete U-statistic should strike a fine  balance} \label{sec:strike_balance}
While \secref{predicament}  demonstrated how the normal distributional approximation accuracy of  $T_f$ can be  plagued by the largeness of the ratio in \eqref{Achilles_heel} under near singularity, we now explore why $U_{n, N}'$ is  a  good candidate to circumvent this issue.

Interestingly,  we shall first consider the following average of independent terms:
\begin{equation}\label{Snm}
S_{\lfloor n/m\rfloor} = \frac{1}{\lfloor n/m \rfloor}\sum_{i=1}^{\lfloor n/m \rfloor} h ({\bf X}_{(i-1)m +1}, \dots, {\bf X}_{im}),
\end{equation}
where $h$ is the same kernel in \eqref{polynomial_kernel}, and  the summands are constructed with non-overlapping batches of samples  and are hence independent. By the classical central limit theorem, it must be the case that, under $H$, 
\begin{equation} \label{indep_sum_convg}
\frac{\sqrt{\lfloor n/m\rfloor}S_{\lfloor n/m\rfloor} }{\sigma_h} \longrightarrow_d \cN(0,1) \text{ whether }\sigma_g \text{ equals } 0 \text{ or not};
\end{equation}
recall $\sigma_h^2 >0$. Moreover,  the  classical  B-E bound  \citep{berry1941accuracy, esseen1942} says that  for an absolute constant $C >0$, 
\begin{equation} \label{BEindep}
\sup_{z \in \bR}\left| P_{H}\left(\frac{\sqrt{\lfloor n/m\rfloor}S_{\lfloor n/m\rfloor}}{\sigma_h}  \leq z\right) - \Phi(z)\right| \leq \frac{C \bE_{H}[|h|^3]}{\sqrt{\lfloor n/m\rfloor} \sigma_h^3}.
\end{equation}
At this point, it is appropriate to introduce the following  result about the \emph{hypercontractivity} of a polynomial in jointly normal random variables, which refers to the fact that the lower norms of such a variable can control its  higher norms up to an absolute constant. 
%direct consequence of hypercontractivity of radamacher chaos
\begin{lemma}[Hypercontractivity of polynomials in jointly normal random variables] \label{lem:hypercontraction}
Let $\bG =
 (g_1, \dots, g_r)^T \sim \cN_r(0, \Sigma)$ be an $r$-variate centered normal random vector, and $Q(\bG) = Q(g_1, \dots, g_r)$ be a polynomial of degree $k$ in the variables $g_1, \dots, g_r$. Then for $1 < q_1 \leq q_2 < \infty$ and an absolute constant $C(k) >0$ depending only on $k$, 
\[
\|Q({\bG})  \|_{q_2} \leq C(k) \bigg(\frac{q_2 - 1}{q_1 -1}\bigg)^{k/2} \| Q({\bG}) \|_{q_1}. 
\]
\end{lemma}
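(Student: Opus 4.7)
The plan is to follow the standard chain of reductions: normalize to a standard Gaussian vector, decompose via Wiener chaos, apply Nelson's hypercontractivity theorem chaos-by-chaos, and then combine. First, I would Cholesky factorize $\Sigma = AA^T$ and set $\tilde Q(\bZ) := Q(A\bZ)$ for $\bZ \sim \cN_r(0, I_r)$; then $\tilde Q$ is a polynomial of degree at most $k$ in the entries of $\bZ$, and it suffices to prove the stated inequality with $\bG$ replaced by $\bZ$. Next, I would expand $\tilde Q(\bZ) = \sum_{j=0}^{k} J_j$ in the orthonormal basis of multivariate Hermite polynomials, where $J_j$ denotes the $L^2$-projection onto the $j$-th Wiener chaos.

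I would then apply Nelson's hypercontractivity theorem for the Ornstein-Uhlenbeck semigroup $T_t$ to each homogeneous piece: since $T_t$ acts on $J_j$ as multiplication by $e^{-jt}$, and $T_t$ is a contraction from $L^{q_1}$ to $L^{q_2}$ whenever $e^{2t} \geq (q_2-1)/(q_1-1)$, choosing $t$ so that $e^{2t}=(q_2-1)/(q_1-1)$ yields the per-chaos bound
\[
\|J_j\|_{q_2} \leq \left(\frac{q_2-1}{q_1-1}\right)^{j/2} \|J_j\|_{q_1}, \quad 0\leq j\leq k.
\]
Because $q_2 \geq q_1$ forces $(q_2-1)/(q_1-1)\geq 1$, the triangle inequality then gives
\[
\|\tilde Q\|_{q_2} \leq \left(\frac{q_2-1}{q_1-1}\right)^{k/2} \sum_{j=0}^{k} \|J_j\|_{q_1},
\]
so it remains only to bound $\sum_j \|J_j\|_{q_1}$ by $C(k)\|\tilde Q\|_{q_1}$.

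The hard part is exactly this last projection bound, and the delicacy lies in producing a constant $C(k)$ that depends only on $k$ and \emph{not} on $q_1$. For $q_1=2$ it is immediate by $L^2$-orthogonality. For $1 < q_1 \leq 2$, I would chain $\|J_j\|_{q_1}\leq \|J_j\|_2 \leq \|\tilde Q\|_2$ (using $L^p$-monotonicity on a probability space and orthogonality) and then invoke the norm equivalence $\|\tilde Q\|_2 \leq C(k)\|\tilde Q\|_{q_1}$ for polynomials of degree $\leq k$ in Gaussians; the latter would be derived by combining the chaos-wise hypercontractivity estimate $\|\tilde Q\|_4 \leq C(k)\|\tilde Q\|_2$ (obtained via Cauchy-Schwarz across the finitely many chaoses) with the Hölder interpolation $\|\tilde Q\|_2^2 \leq \|\tilde Q\|_1^{2/3}\|\tilde Q\|_4^{4/3}$. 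For $q_1 > 2$, self-adjointness of the $j$-th chaos projection on $L^2$, combined with duality between $L^{q_1}$ and $L^{q_1/(q_1-1)}$ (whose conjugate exponent lies in $(1,2)$), transfers the bound just obtained at the conjugate exponent to $L^{q_1}$ with the same constant. Assembling these pieces produces the claimed inequality with $C(k)$ equal to $(k+1)$ times the uniform projection-norm constant.
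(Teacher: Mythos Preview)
Your reduction to a standard Gaussian via a matrix square root matches exactly what the paper does. Beyond that step, the paper gives no self-contained argument: it simply invokes Theorem~3.2.10 of de~la~Pe\~na and Gin\'e's \emph{Decoupling} for the identity-covariance case, noting that this in turn rests on the Khinchine--Kahane inequality for Rademacher chaos. Your route via the Wiener chaos decomposition and Nelson's hypercontractivity for the Ornstein--Uhlenbeck semigroup is therefore a genuinely different, more self-contained argument; the per-chaos estimate and your handling of $q_1=2$ and $1<q_1\le 2$ are correct.

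The duality step for $q_1>2$, however, has a gap. Writing $P_j$ for the $j$-th chaos projection and $V_k$ for the space of polynomials of degree at most $k$, what you established at the conjugate exponent $q_1'\in(1,2)$ is $\|P_j g\|_{q_1'}\le C(k)\|g\|_{q_1'}$ \emph{only for} $g\in V_k$, since the chain $\|P_j g\|_{q_1'}\le\|P_j g\|_2\le\|g\|_2\le C(k)\|g\|_{q_1'}$ uses the $L^2$--$L^{q_1'}$ norm equivalence on $V_k$ at the last step. Standard $L^{q_1}$--$L^{q_1'}$ duality for the self-adjoint $P_j$ would require this bound for \emph{arbitrary} test functions $h\in L^{q_1'}$, and the dual of the subspace $(V_k,\|\cdot\|_{q_1'})$ is not $(V_k,\|\cdot\|_{q_1})$. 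One clean fix that keeps $C(k)$ independent of $q_1$: on $V_k$ one has $T_{t_i}=\sum_{j=0}^k e^{-jt_i}P_j$ for any distinct $t_0,\dots,t_k>0$, and inverting this Vandermonde-type system writes each $P_j|_{V_k}$ as a fixed linear combination $\sum_i c_{ji}\,T_{t_i}$ with coefficients depending only on $k$; since every $T_{t_i}$ is an $L^p$ contraction for all $p\ge 1$, this yields $\|P_j f\|_p\le\bigl(\sum_i|c_{ji}|\bigr)\|f\|_p$ for all $f\in V_k$, uniformly in $p$.
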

\noindent \emph{Remark}:  \lemref{hypercontraction} is a direct consequence  of the Khinchine-Kahane inequality for Rademacher chaos \citep[Theorem 3.2.5, p.115]{de2012decoupling}   and  will  be used repeatedly in the sequel; we also note that the original version stated in \citet[Theorem 3.2.10, p.118]{de2012decoupling} assumes $\Sigma$ to be the identity matrix,  but our version here is an immediate consequence because one can write $\bG = \Sigma^{1/2}\bZ$ for a standard normal vector $\bZ$, and it is easy to see that $Q(\bG) = Q(\Sigma^{1/2}\bZ)$ is also a degree-$k$ polynomial in $\bZ$. 

We can now see why $S_{\lfloor n/m\rfloor}$ is  well approximated by a normal distribution even if $\sigma_g$ is very close to zero: Since  $h(\bX_1, \dots, \bX_m)$ is a polynomial of degree $2m$ in jointly Gaussian random variables  by  the construction in \eqref{breve_H}-\eqref{polynomial_kernel}, its hypercontractivity  from  \lemref{hypercontraction} implies that, for an absolute constant $C(m) > 0$,  
\begin{equation}  \label{hypercontractivity_of_h}
\frac{\bE_{H}[|h|^3]}{\sigma_h^3}  \leq C(m).
\end{equation} 
Thus, we can further write the B-E bound in \eqref{BEindep} as
\begin{equation} \label{singular_agnostic_speed_Snm}
\sup_{z \in \bR}\left| P_{H}\left(\frac{\sqrt{\lfloor n/m\rfloor}S_{\lfloor n/m\rfloor}}{\sigma_h}  \leq z\right) - \Phi(z)\right| \leq \frac{C(m) }{\sqrt{n}};
\end{equation}
 unlike the $U_n$ in \eqref{complete_U_stat}, which ultimately drives the asymptotic distribution of $T_f$ and whose normal approximation accuracy  in \eqref{BEustat} could be plagued by the large ratio $\sigma_h/\sigma_g$ when $H$ is nearly singular, the normal approximation of $S_{\lfloor n/m\rfloor}$ is robust no matter what  $\sigma_g$ is. As a side note, since the projection kernel $g({\bX})$ is always  a degree-2 polynomial in Gaussian random variables as per its structure in \eqref{g_fn_structure}, \lemref{hypercontraction} also suggests that the bound  
 \begin{equation} \label{hypercontractivity_of_g}
\frac{\bE_{H} [|g|^3]}{ \sigma_g^3 } \leq C
 \end{equation}
 for an absolute constant $C >0$; unlike the ratio $\frac{\sigma_h}{\sigma_g}$, $\frac{E_{H} [|g|^3] }{\sigma_g^3}$ is not the source of poor normal approximation for $U_n$ despite appearing in the U-statistic B-E bound
 \eqref{BEustat}. The moral  here is that  $S_{\lfloor n/m\rfloor}$ benefits from being a sum of the \emph{independent} hypercontractive terms $\{ h ({\bf X}_{(i-1)m +1}, \dots, {\bf X}_{im})\}_{i =1, \dots, \lfloor n/m \rfloor}$ and 
 has good normal approximation property as suggested in \eqref{singular_agnostic_speed_Snm}. On the contrary, the complete U-statistic $U_n$ in \eqref{complete_U_stat} is a sum of similarly  hypercontractive terms $\{h({\bf X}_{i_1}, \dots, {\bf X}_{i_m})\}_{(i_1, \dots, i_m) \in I_{n, m}}$, but the intricate dependence among them cripples the normal approximation accuracy by injecting the ratio $\frac{\sigma_h}{\sigma_g}$ in the B-E bound \eqref{BEustat}; in fact, an examination of the proof behind \eqref{BEustat}  \citep[Section 10.2.1]{chen2010normal} precisely suggests that $\frac{\sigma_h}{\sigma_g}$ stems from such dependence. 
 
However, despite being  (pleasantly)  
singularity-agnostic with respect to   the convergent speed, 
$S_{\lfloor n/m\rfloor}$ pays the price of essentially dividing down the sample size by  $m$, while $U_n$, which drives the distribution of $T_f$, is aggressive in terms of power by summing over all the ${n \choose m}$ sample indices in $I_{n, m}$;  see \citet[Theorem 5.1.4, p.176]{serfling1980} for an optimality result about  complete U-statistics regarding the latter aspect. Intuitively, one should expect, by summing  over only a random  sample of indices in $I_{n,m}$ as in \eqref{icu_def} with an appropriately chosen computational budget $N$,  $U_{n, N}'$ shall \emph{strike a balanced middle ground} between $S_{\lfloor n/m\rfloor}$ and $U_n$, i.e., summing over more terms than $S_{\lfloor n/m\rfloor}$  on average to be more power efficient,  but  carrying less dependence between the summands than $U_n$ to maintain the normal approximation accuracy under near-singular situations.

\section{A singularity-agnostic Berry-Esseen bound for $U_{n, N}'$}  \label{sec:main_results}

We now state our main result: a  Berry-Esseen bound for the normal convergence in \eqref{janson_convg} that truly mathematizes  our intuition at the end of \secref{strike_balance} about $U_{n, N}'$. We call the bound ``singularity-agnostic" because it captures the fact that the convergent speed of  \eqref{janson_convg} is not affected by the underlying singularity status of $H$,  when the computational budget $N$ is appropriately chosen; recall that singularity can also be characterized by the zeroness of $\sigma_g$ as per \eqref{singular_condition_as_Ustat}.

\begin{theorem}[``Singularity-agnostic" Berry-Esseen bound for $U'_{n, N}$]\label{thm:main}
Let  $U_{n ,N}'$ be constructed as in \eqref{icu_def} with the kernel $h$ in  \eqref{polynomial_kernel} and  data from \eqref{iid_data_sample}; assume \eqref{non_singular_assumption_Theta}, $\log(n) \leq N$, $2 \leq m$ and $2m < n$ hold. Then one has the generic bound \\
\begin{equation} \label{generic_bdd_main}
 \sup_{z \in \bR}\Bigg| P_{H}\Big(\frac{\sqrt{n} U_{n, N}'}{\sigma} \leq z\Big) - \Phi(z) \Bigg| \leq \frac{C}{\sqrt{N}} \Bigg\{\frac{\sqrt{1 - 3p_{n, N} + 3p_{n, N}^2}}{1 - p_{n, N}} + \log n \Bigg\}
  +  \frac{C(m)}{\sqrt{n}} + \mathfrak{R},
\end{equation}
for an error term $\mathfrak{R} \geq 0$ with the property  that:
\begin{enumerate}
\item (Regular  case) If $\sigma_g \neq 0$,  then
\[
\mathfrak{R} \leq  C(m) \Bigg\{   \sum_{r=2}^m\frac{ \sqrt{N}(\log (2n^r +1))^{3r} }{\sqrt{ (1 - p_{n, N})n^{r} }} \Bigg\} + C \Bigg( \frac{p_{n, N}}{1- p_{n, N}} + \frac{1}{\sqrt{n}}\Bigg).
 \]
%  for any $\eta \in (0, 1/2)$, 
%\begin{multline*}
% \Bigg| P\Big(\frac{\sqrt{n} U_{n, N}'}{\sigma} \leq z\Big) - \Phi(z) \Bigg| \leq \frac{\bE[|g|^3]}{\sqrt{n}\sigma_g^3} +  
% C(m) \Bigg\{   \sum_{r=2}^m\frac{ \sqrt{N}(\log (2n^r +1))^{3r} }{\sqrt{ (1 - p_{n, N})n^{r} }} \Bigg\}  + C \sqrt{ \frac{p_{n, N}}{1- p_{n, N}}}  \\
%  + \frac{C_1}{\sqrt{N}} \frac{\sqrt{1 - 3p_{n, N} + 3p_{n, N}^2}}{1 - p_{n, N}} 
%  \Bigg( \frac{\sqrt{1 +n^{- \eta}}}{1 - n^{ -\eta}} \Bigg)  \\
%+ \frac{C_2 \sqrt{m}}{\sqrt{n} (1 - n^{-\eta})} + 6 \exp \Big(  -  \frac{  n^{1 -2 \eta}}{C_3(m) }\Big)  
%\end{multline*}

\item (Singular case) If $\sigma_g =  0$, then
\[
\mathfrak{R} \leq  \frac{C(m) \sqrt{N}(\log (2n^m +1 ))^{3m}}{n\sqrt{1 - p_{n, N}}} +  C  \frac{p_{n, N}}{1- p_{n, N}}.
 \]
% for any $\eta \in (0, 1/2)$, 
%\begin{multline*} \label{BE_bdd_Uprime_degenerate}
% \Bigg| P\Big(\frac{\sqrt{n} U_{n, N}'}{\sigma} \leq z\Big) - \Phi(z) \Bigg| \leq \frac{C_1(m)}{\sqrt{n}}+   \frac{C_2(m) \sqrt{N}(\log (2n^m +1 ))^{4m}}{n} \\ \frac{C_3}{\sqrt{N}} \Bigg(\frac{\sqrt{1 - 3p_{n, N} + 3p_{n,N}^2}}{(1 - p_{n, N})}  +  \log n\Bigg)
% + 6 \exp \Big(  -  \frac{  n^{1 -2 \eta}}{C_4(m) }\Big)  + C_5 \sqrt{\frac{p_{n, N}}{1 - p_{n, N}}}
%\end{multline*}

\end{enumerate}
In particular, with the additional assumptions that  $n \leq N \leq mn$ and $2m+1 < n$,  we have the overall bound 
\begin{equation} \label{sing_agn_BE_bdd}
 \sup_{z \in \bR}\Bigg| P_{H}\Bigg(\frac{\sqrt{n} U_{n, N}'}{\sigma} \leq z\Bigg) - \Phi(z) \Bigg| \leq \frac{C(m) \log (2n^m +1 ))^{3m}}{\sqrt{n}},
\end{equation}
 regardless of the value of $\sigma_g$. 
\end{theorem}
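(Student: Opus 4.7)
The plan is to exploit the decomposition
\[
\sqrt{n}U'_{n,N} \;=\; \sqrt{n}U_n \;+\; \sqrt{n}(U'_{n,N} - U_n),
\]
in which, conditional on the data, the second summand is a sum of $\binom{n}{m}$ independent centered random variables driven by the Bernoulli weights $Z_\iota - p_{n,N}$, while the first summand is a complete U-statistic whose limit is driven by the projection kernel $g$. The variance identity $\sigma^2 = m^2\sigma_g^2 + \alpha_n\sigma_h^2$ makes these two contributions ``Pythagorean'' in the limit, which motivates an iterative conditioning argument in the spirit of \citet{chen2019randomized, SongChenKato2019}.

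First I would replace the random denominator $\hat{N}$ by its mean $N$: a Bernstein bound for the Bernoulli sum $\hat{N}$, combined with Cauchy--Schwarz and standard U-statistic variance bounds, converts this swap into an error of order $p_{n,N}/(1-p_{n,N}) + 1/\sqrt{n}$. Next I apply the classical univariate Berry--Esseen bound to $\sqrt{n}(\tilde{U}'_{n,N} - U_n)$, with $\tilde{U}'_{n,N} = N^{-1}\sum_\iota Z_\iota h_\iota$, \emph{conditionally} on the data. The conditional variance $p_{n,N}(1-p_{n,N})N^{-2}\sum_\iota h_\iota^2$ concentrates around $(1-p_{n,N})\alpha_n\sigma_h^2/n$ via a U-statistic variance calculation applied to $h^2$, while the conditional third-moment ratio is handled by the hypercontractivity of $h$ (\lemref{hypercontraction}); a tail-truncation step for the sub-Weibull variables $h_\iota$ on a high-probability event produces the $\log n$ factor in the leading contribution $\tfrac{1}{\sqrt{N}}\{\sqrt{1-3p_{n,N}+3p_{n,N}^2}/(1-p_{n,N}) + \log n\}$ of \eqref{generic_bdd_main}.

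The core of the argument, and the source of its novelty, is then to bound
\[
\Big|\, \bE_H\Big[\Phi\Big(\tfrac{z\sigma - \sqrt{n}U_n}{\sqrt{\alpha_n(1-p_{n,N})}\,\sigma_h}\Big)\Big] \;-\; \Phi(z)\, \Big|.
\]
If $\sqrt{n}U_n/(m\sigma_g)$ were exactly $\cN(0,1)$, this expectation would collapse to $\Phi(z)$, modulo the mismatch between $\sigma^2$ and $\alpha_n(1-p_{n,N})\sigma_h^2 + m^2\sigma_g^2$, which accounts for the further $p_{n,N}/(1-p_{n,N})$ term. The deviation of $\sqrt{n}U_n/(m\sigma_g)$ from standard normal is then quantified using a \emph{nonuniform} Berry--Esseen bound for the complete U-statistic, integrated by parts against the smooth test function above. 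Unlike the uniform bound \eqref{BEustat}, which injects the ratio $\sigma_h/\sigma_g$ that the paper seeks to avoid, the nonuniform bound's exponential Gaussian prefactor is absorbed into the decay of $\Phi'$, yielding an error expressed in terms of exponential-type moments of the canonical projections in the Hoeffding decomposition of $U_n$. In the singular case $\sigma_g = 0$ the projection kernel $g$ vanishes $P_H$-almost surely, the Hoeffding decomposition of $U_n$ starts at degree two, $\sqrt{n}U_n/\sigma$ is $O_P(n^{-1/2})$ by $L_2$ bounds for degenerate U-statistics, and $\mathfrak{R}$ reduces to contributions from the higher-order canonical projections alone.

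The main obstacle is constructing that nonuniform Berry--Esseen bound for $U_n$ with the correct qualitative shape: it must carry an exponential-type Gaussian prefactor (so as to couple well with $\Phi'$ under the ``completing the square'' step) while its numerator is expressible purely through canonical-projection moments, whose tails are Bernstein-controlled in \secref{Bernstein_pf} via the moment inequalities for canonical U-statistics, the Bernstein bounds for sub-Weibull sums, and the exponential randomized concentration inequalities from the Stein's-method literature highlighted in the introduction. Once \eqref{generic_bdd_main} is in hand, \eqref{sing_agn_BE_bdd} follows by choosing $n \leq N \leq mn$ so that the $1/\sqrt{N}$ term and the $\sqrt{N}(\log(2n^r+1))^{3r}/\sqrt{(1-p_{n,N})n^r}$ terms balance to the advertised $C(m)(\log n)^{3m}/\sqrt{n}$ rate.
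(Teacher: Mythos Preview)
Your proposal is correct and follows essentially the same route as the paper: the $\hat N \to N$ reduction via Bernstein, the conditional Berry--Esseen bound for the Bernoulli-driven piece $B_n$, and---crucially---the use of a \emph{nonuniform} B-E bound for $U_n$ whose exponential decay is absorbed via ``completing the square'' against the Gaussian smoothing (the paper realizes your ``integration by parts'' step by introducing an auxiliary $Z_1\sim\cN(0,1)$ and swapping the order of conditioning, which is equivalent). One small misattribution: the $\log n$ factor in \eqref{generic_bdd_main} actually arises from the Bernstein threshold $\sqrt{2\log n}/\sqrt{N}$ in the $\hat N/N$ step, not from truncating the $h_\iota$'s.
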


%We comment on the following features of our result:
%\begin{enumerate}
%\item
%\item 
%\end{enumerate}

 The  bound  
\eqref{sing_agn_BE_bdd}  is arguably our most important statement, 
as the  $C(m)$ in \eqref{sing_agn_BE_bdd} is a constant that \emph{only} depends on $m$. Just like  the B-E bound for $S_{\lfloor n/m\rfloor}$ in \eqref{singular_agnostic_speed_Snm}, it informs that the convergent speed of $U_{n, N}'$ to normality won't be severely slowed down by the singularity status of $H$, as opposed to its complete counterpart $U_n$ whose slow convergence is suggested by the largeness of the ratio in \eqref{Achilles_heel} under near-singular situations. At the same time, when $n \leq N \leq mn$,  $U_{n, N}'$  is summing over more terms on average than $n/m$,  achieving more efficient use of the data than $S_{\lfloor n/m\rfloor}$. 
  In particular, $N = 2n$ is a  ``balanced" choice for the computational budget advocated in \citet{sturma2022testing}, which is also adopted for the small simulation study behind \figref{warped_size}.  We also comment on three other noteworthy aspects:

\begin{enumerate} [(a)]

\item As $N$ approaches ${n \choose m}$, the ``success probability" $p_{n, N}$ of the  Bernoulli samplers tends to $1$; since $1  - p_{n, N}$ figures in the denominators,   the right hand side of the generic bound in \eqref{generic_bdd_main} will blow up. This is  natural:  the incomplete $U_{n, N}'$ should behave very much like  $U_n$ when $N$ is close to ${n \choose m}$, and that the bound becoming vacuous simply indicates that the normal approximation accuracy could be very low, much like the large ratio in \eqref{Achilles_heel} advises that the convergence in \eqref{complete_Ustat_convergence} could be slow  under near-singular situations. 

\item \thmref{main}$(i)$ is most comparable to  a B-E bound for one-dimensional incomplete U-statistic proven in \citet{peng2022rates}. Precisely, under the same assumptions and $\sigma_g \neq 0$, \citet[Theorem 5]{peng2022rates} suggests that  
\begin{align}
&\sup_{z \in \mathbb{R}} \left| P_{H}\left( \frac{ \sqrt{n}U_{n, N}' }{\sigma} \leq z \right) - \Phi(z)\right| \notag \\
&\leq C \Bigg\{\frac{\bE_{H} [|h|^3]}{\sqrt{N} \sigma_h^3} + \left(\frac{m}{n}\right)^{1/3} + \frac{\bE_{H}[|g|^3]}{\sqrt{n}  \sigma_g^3} + \left[ \frac{m}{n} \left( \frac{\sigma_h^2}{ \sigma_g^2m} - 1\right) \right]^{1/2} \Bigg\} \notag \\
&\leq C(m) \Bigg\{ \frac{1}{\sqrt{N}} +\frac{1}{n^{1/3}} + \left[ \frac{1}{n} \left( \frac{(\sigma_h/\sigma_g)^2}{ m} - 1\right) \right]^{1/2} \Bigg\} \label{peng_bdd}, 
\end{align}
where the last inequality is an immediate consequence of the hypercontractivity in  \eqref{hypercontractivity_of_h} and \eqref{hypercontractivity_of_g}. However, due to the existence of the ratio $\sigma_h/\sigma_g$ which can be extremely large as described in \secref{predicament} for a near-singular $H$, \eqref{peng_bdd} is not refined enough to capture the singularity-agnostic nature of $U_{n, N}'$  as in \eqref{sing_agn_BE_bdd}. Moreover, compared to the rate $n^{-1/3}$ in \eqref{peng_bdd}, our bound in \eqref{sing_agn_BE_bdd} has the rate $n^{-1/2}$ up to some logarithm factors, which is  near-optimal.
\item Our paper really concerns a non-trivial kernel $h$ with degree $m \geq 2$, and  \thmref{main} is also proven for this  case. If $m =1$, then $\sigma_h = \sigma_g$; from a singularity perspective, one can simply use $S_{\lfloor n/1\rfloor} = n^{-1}\sum_{i=1}^n h(\bX_i)$, which has the singularity-agnostic B-E bound \eqref{singular_agnostic_speed_Snm}.
\end{enumerate}

%We  now set the scene for establishing \thmref{main}: To simplify notation,  for any index ${\bf i} =  (i_1, \dots, i_m) \in I_{n,m}$, we will use  the shorthand ${\bf X}_{\bf i} = ({\bf X}_{i_1}, \dots, {\bf X}_{i_m})$, $Z_{\bf i} = Z_{(i_1, \dots, i_m)}$ and $h({\bf X}_{\bf i}) = h({\bf X}_{i_1}, \dots, {\bf X}_{i_m})$. 
%Like existing works  \citep{chen2019randomized, SongChenKato2019, peng2022rates, sturma2022testing} that offer B-E bounds for incomplete U-statistics, we rewrite $U_{n, N}'$ as
%\begin{equation} \label{U_prime_as_U_star}
%U_{n, N}'  = \frac{N}{\hat{N}} W_n,
%\end{equation}
%where $W_n$ can be further decomposed as
%\begin{equation} \label{Wn_decomp}
%W_n = U_n + \sqrt{1 - p_{n,N}}  B_n,
%\end{equation}
%for 
%\begin{equation}\label{Bn_def}
%B_n \equiv  \frac{1}{N} \sum_{{\bf i} \in I_{n, m}} \frac{Z_{{\bf i}} - p_{n,N}}{\sqrt{1 - p_{n,N}}} h({\bf X}_{\bf i});
%\end{equation}
%essentially, up to the factor $\sqrt{1 - p_{n,N}}$, $W_n$ is a sum of the complete U-statistic $U_n$ and $B_n$. It is a simple matter to check that \eqref{U_prime_as_U_star}-\eqref{Bn_def} hold.

The proof of \thmref{main} involves  analyzing the tail behavior of the \emph{Hoeffding projections} of  $U_n$, which is one of our main technical innovations.   In what follows, if $P_1, \dots, P_m$ are $m$ distributions on $\mathbb{R}^p$ and $\Psi: (\bR^p)^m \longrightarrow \bR$ is a function with $m$ arguments in $\bR^p$, we use the notation $P_1 \times \dots \times P_m \Psi = \int \Psi({\bf x}_1, \dots, {\bf x}_m) d(P_1 \times \dots \times P_m)$.  Let $\delta_{\bf x}$ denote the Dirac measure at ${\bf x} \in \bR^p$. For each  $r \in [m]$ and a  \emph{symmetric} kernel $\Psi: (\bR^p)^m \longrightarrow \bR$,  let $\pi_r (\Psi): (\bR^p)^r \longrightarrow \bR$ be the Hoeffding projection kernel 
\begin{equation} \label{projection_fns}
\pi_r (\Psi)({\bf x}_1, \dots, {\bf x}_r) = (\delta_{{\bf x}_1} - P_{H}) \times \cdots \times (\delta_{{\bf x}_r} - P_{H})  \times P_{\Hf}^{m-r} \Psi,
\end{equation}
which is also symmetric in ${\bf x}_1, \dots, {\bf x}_r$ and is said to be \emph{canonical} (or \emph{completely degenerate}), in the sense that, for any $i = 1, \dots, r$
\begin{equation} \label{canonical_property_hoeffding_proj}
\int  \pi_r (\Psi) ({\bf x}_1, \dots, {\bf x}_r) dP_{\Hf}({\bf x}_i) = 0 \text{ almost surely}.
\end{equation}
In particular, for our interested kernel $h$ defined in \eqref{polynomial_kernel},  $\pi_1(h) = g$; moreover, 
if
\[
d \equiv \max \Bigg\{1 \leq r \leq m: \int h({\bf x}_1, \dots, {\bf x}_m) dP_{\Hf}^{m - r +1} = 0 \text{ almost surely}\Bigg\},
\]
 i.e. $h$ is $P_H$-degenerate of order $d -1$, 
 then 
we have
the  \emph{Hoeffding decomposition}
\begin{equation} \label{H_decomp}
U_n = \sum_{r=d}^m {m \choose r} U_n^{(r)},
\end{equation}
where 
\begin{equation} \label{H_proj_u_stat}
U_n^{(r)} \equiv {n \choose r}^{-1} \sum_{(i_1, \dots, i_r) \in I_{n, r}}  \pi_r (h)({\bf X}_{i_1}, \dots, {\bf X}_{i_r})
\end{equation}
 is each a canonical U-statistic of degree $r$ known as a \emph{Hoeffding projection}; 
we refer  to  \citet[p.138]{de2012decoupling} for  these standard results.  The following  inequalities, which stem from a Bernstein inequality proven  for the Hoeffding projections in  \secref{Bernstein_pf}, are crucial for establishing \thmref{main}.

\begin{lemma}[``Consequences" of Bernstein inequalities for the $U_n^{(r)}$'s] \label{lem:Bernstein_ineq_h_proj}
Let $U_n^{(r)}$ be constructed as in \eqref{H_proj_u_stat} with the kernel $h$ in  \eqref{polynomial_kernel} and  data from \eqref{iid_data_sample}. Assume \eqref{non_singular_assumption_Theta} and $2m < n$. There exist absolute constants $C_1(m), C_2(m), C_3(m), c_1(m),  c_2(m) >0$ such that, for all $r = d, \dots, m$,
\begin{enumerate}
 
 \item $P_{H}\Big(\Big|\sqrt{n} U_n^{(r)}\Big| >     C_1(m)  \sigma_h n^{(1-r)/2}  (\log (2n^r +1 ))^{3r}\Big)  \leq \frac{C_2(m)}{n} $  and
  %$x' = \cK_m \cdot e \cdot (\log (2 n^m +1))^m n^{m/2} \|\frakh\|_2 \cdot  (\log (n+ 2))^{2.5m}$
  
%  \begin{equation*}  \label{large_dev_tail_bound}
%P_{H}\Big(\Big|\sqrt{n} U_n^{(r)}\Big| >     C_1(m)  \sigma_h n^{(1-r)/2}  (\log (2n^r +1 ))^{3r}\Big)
% \leq \frac{C_2(m)}{n}, \text{ and }
%\end{equation*}

\item $ P_{H}\Big( |\sqrt{n} U_n^{(r)}|> z\Big) \leq      C_3(m) \exp\Big(-  \frac{c_1(m) z^2 n^{r-1}}{ \sigma_h^2 }   \Big) $ for $0 \leq  z \leq c_2(m)   \sigma_h   n^{\frac{1-r}{2}}$

% and for $0 \leq  z \leq c_2(m)   \sigma_h  \cdot n^{\frac{1-r}{2}}$,
%  \begin{equation*} \label{part_sub_Gaussian_tail}
%   P_{H}\Bigg( |\sqrt{n} U_n^{(r)}|> z\Bigg) \leq      C_3(m) \exp\Bigg(-  \frac{c_1(m) z^2 n^{r-1}}{ \sigma_h^2 }   \Bigg) 
%  \end{equation*}
 \end{enumerate}

\end{lemma}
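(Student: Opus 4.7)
My plan is to prove an underlying Bernstein-type tail inequality for $\sqrt{n}\, U_n^{(r)}$, from which both parts (i) and (ii) follow by specializing the deviation level $z$. Two structural features of the projection kernel $\pi_r(h)$ drive the argument. First, $\pi_r(h)$ is completely degenerate of order $r$ by the definition in \eqref{projection_fns}, so canonical orthogonality pins down $\var_H(U_n^{(r)}) = \binom{n}{r}^{-1}\|\pi_r(h)\|_{2,H}^2$, and the orthogonality of distinct Hoeffding components of $h$ gives $\|\pi_r(h)\|_{2,H} \leq \sigma_h$, since $\bEH[h] = f(\Theta) = 0$ forces $\|h\|_{2,H} = \sigma_h$. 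Second, $\pi_r(h)(\bx_1, \ldots, \bx_r)$ is a polynomial of total degree at most $2m$ in the jointly Gaussian coordinates of $\bx_1, \ldots, \bx_r$, because $h$ is such a polynomial by \eqref{breve_H}--\eqref{polynomial_kernel} and integrating $m - r$ arguments out against $P_H$ only replaces the corresponding monomials by their Gaussian moments without raising the degree in the remaining arguments.

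The first step is a moment bound on the kernel: applying \lemref{hypercontraction} to the Gaussian polynomial $\pi_r(h)$ gives $\|\pi_r(h)(\bX_1, \ldots, \bX_r)\|_{q, H} \leq C(m)\, q^{m}\, \sigma_h$ for all $q \geq 2$. The second step is to feed this into the moment inequalities for canonical U-statistics of Gin\'e--Zinn and Adamczak referenced in the introduction. Their bound expresses $\|U_n^{(r)}\|_{q, H}$ as a sum over non-trivial partitions of $[r]$ of decoupled norms of $\pi_r(h)$; since each such norm is an $L_2$ or $L_q$ norm of a polynomial in Gaussians of degree at most $2m$, it can be controlled by the display above. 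Collecting the contributions produces a moment bound of schematic form
\[
\|\sqrt{n}\, U_n^{(r)}\|_{q, H} \leq C(m)\, \sigma_h\, n^{(1-r)/2}\, q^{\alpha(m, r)},
\]
in which the exponent $\alpha(m, r)$ absorbs both the hypercontractivity cost $m$ from the Gaussian chaos and the decoupling cost from level-$r$ degeneracy.

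Converting moments to tails by the sub-Weibull Bernstein bounds of \citet{kuchibhotla2022moving, zhang2022sharper} then yields a two-regime tail inequality
\[
P_{H}\bigl(|\sqrt{n}\, U_n^{(r)}| > z\bigr) \leq C(m)\, \exp\!\Bigl(-c(m)\, \min\bigl\{z^2 n^{r-1}/\sigma_h^2,\ (z n^{(r-1)/2}/\sigma_h)^{1/\alpha(m, r)}\bigr\}\Bigr).
\]
Part (ii) is precisely the Gaussian regime, valid on the window $z \leq c_2(m)\sigma_h n^{(1-r)/2}$ where the first argument of the minimum is active, since for such $z$ the quantity $z n^{(r-1)/2}/\sigma_h$ is bounded by a constant and hence its $1/\alpha(m, r)$-th power cannot win. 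Part (i) corresponds to the deeper polynomial-tail regime: choosing $z = C_1(m)\sigma_h n^{(1-r)/2}(\log(2n^r+1))^{3r}$ drives the second argument above $r \log n$ as soon as $3r$ dominates $\alpha(m, r)$, pushing the right-hand side below $C_2(m)/n$. The same conclusion also follows directly from the moment bound by Markov's inequality with $q \asymp r \log(2n^r + 1)$.

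The main obstacle is the execution of the U-statistic moment inequality in Step 2. The Adamczak bound is a sum over non-trivial partitions of $[r]$ involving several distinct randomized norms, and to ensure that the final exponent $\alpha(m, r)$ is no larger than $3r$ while keeping the constants dependent only on $m$ and not on $n$ or $r$, Gaussian hypercontractivity must be applied carefully to each of these norms with attention to its dependence on the partition structure. This bookkeeping is exactly the technical content the paper defers to \secref{Bernstein_pf}.
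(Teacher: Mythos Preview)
Your overall strategy---hypercontractivity of the Gaussian polynomial $\pi_r(h)$, the Adamczak moment inequality for canonical U-statistics, then conversion to tails---matches the paper's route through \thmref{sharp_ustat_moment} and \thmref{Bernstein_ineq}. Two concrete gaps separate your sketch from a proof, however.

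First, the single-exponent ``schematic'' moment bound $\|\sqrt{n}\,U_n^{(r)}\|_{q,H} \le C(m)\,\sigma_h\,n^{(1-r)/2}\,q^{\alpha(m,r)}$ cannot yield the two-regime tail you display next. A single sub-Weibull order $\alpha$ produces only the one-regime tail $\exp\bigl(-c(z/M)^{1/\alpha}\bigr)$, and if $\alpha>1/2$ you lose the sub-Gaussian window needed for part~(ii). The paper retains the full partition structure: the $p$-th moment of $\binom{n}{r}U_n^{(r)}$ is bounded (equation~\eqref{k_moment_bdd_in_MIJ}) by a sum over $(I,\cJ)$ with \emph{distinct} exponents $2|I^c|+|\cJ|/2$ and \emph{distinct} scales $M_I=(\log(2n^{|I^c|}+1))^{|I^c|}\,n^{|I|/2}\,\sigma_h$. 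The sub-Gaussian regime is precisely the term $(I,\cJ)=([r],\{[r]\})$, with exponent $1/2$ and scale $n^{r/2}\sigma_h$; \clmref{claim2} is a separate optimization showing this term wins the minimum for $z\le c(r)\sigma_h n^{(1-r)/2}$. You must keep at least this term apart from the rest.

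Second, the ``sub-Weibull Bernstein bounds'' of \citet{kuchibhotla2022moving, zhang2022sharper} that you invoke (the paper's \lemref{bernstein_sum_weibull}) are for sums of \emph{independent} sub-Weibull variables, not a device for turning a moment bound on a single U-statistic into a multi-regime tail. The paper instead applies Markov's inequality at the optimized exponent $p_x$ in~\eqref{px_def}, and the minimum over $(I,\cJ)$ in \thmref{Bernstein_ineq} emerges directly from this optimization. Relatedly, the power $(\log(2n^r+1))^{3r}$ in part~(i) is not produced by your kernel-level bound $\|\pi_r(h)\|_q\le C(m)q^m\sigma_h$; it comes from combining the factor $(\log(2n^{|I^c|}+1))^{|I^c|}$ that the sub-Weibull maximal inequality (\lemref{max_ineq}) contributes to $M_I$ when bounding $\max_{{\bf i}_{I^c}}\bE_{I,H}\bigl[\sum_{{\bf i}_I}(h^{(r)}_{\bf i})^2\bigr]$ in \clmref{claim1}, with the worst-case exponent $2|I^c|+|\cJ|/2\le 2r$ attained at $I=\emptyset$. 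Your final paragraph gestures toward this bookkeeping, but identifying these two mechanisms is essential, not optional.
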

 \lemref{Bernstein_ineq_h_proj}$(ii)$ is particularly remarkable, as it captures the sub-Gaussian behavior of the tail of $U_n^{(r)}$ for smaller values of $z$, just like the classical Bernstein's inequality for a sum of independent sub-exponential variables \citep[Proposition 3.1.8]{GN2016}. 
Like existing works  on  B-E bounds for incomplete U-statistics \citep{chen2019randomized, SongChenKato2019, peng2022rates, sturma2022testing}, we will rewrite $U_{n, N}'$ as
\begin{equation} \label{U_prime_as_U_star}
U_{n, N}'  = \frac{N}{\hat{N}} W_n,
\end{equation}
where 
\begin{equation} \label{Wn_decomp}
W_n \equiv U_n + \sqrt{1 - p_{n,N}}  B_n,
\end{equation}
for 
\begin{equation}\label{Bn_def}
B_n \equiv  \frac{1}{N} \sum_{(i_1, \dots, i_m) \in I_{n, m}} \frac{Z_{(i_1, \dots, i_m)} - p_{n,N}}{\sqrt{1 - p_{n,N}}} h({\bf X}_{i_1}, \dots, {\bf X}_{i_m});
\end{equation}
essentially, up to the factor $\sqrt{1 - p_{n,N}}$, $W_n$ is a sum of the complete U-statistic $U_n$ and $B_n$, and it is a simple matter to check that \eqref{U_prime_as_U_star}-\eqref{Bn_def} hold.  With these preparations, we are now primed to prove \thmref{main}. 
\begin{proof}[Proof of \thmref{main}]
It is a direct consequence of \eqref{U_prime_as_U_star} that
\begin{equation} \label{CK_decomposition_rescale}
\frac{\sqrt{n}U_{n, N}' }{\sigma} = \Bigg(\frac{N}{\hat{N}} - 1\Bigg)\frac{\sqrt{n} W_n}{\sigma} +\frac{\sqrt{n} W_n}{\sigma},
\end{equation}
which allows us to simplify the problem slightly: 
By Bernstein's inequality for bounded random variables  \citep[Lemma 2.2.9, p.102]{vaart1996weak}, 
%\[
%P \Bigg( \Bigg|\frac{\hat{N}}{N}-  1\Bigg| > \frac{t}{N}\Bigg) \leq 2 \exp \Bigg( - \frac{1}{2}\frac{t^2}{ N (1 - p_n)+ t/3 }\Bigg)
%\]
%Pick $t = \sqrt{2 N \log(n)}$, we get
\begin{align} 
P \Bigg( \Bigg|\frac{\hat{N}}{N}-  1\Bigg| > \frac{\sqrt{2\log n}}{\sqrt{N}}\Bigg) 
%&\leq 2 \exp \Bigg( - \frac{N \log(n)}{ {n \choose m} p_{n, N}(1- p_{n, N})+  \frac{\max(p_{n, V}, 1 - p_{n, V})}{3}\sqrt{2N \log(n)} }\Bigg) \notag \\
&\leq  2 \exp \Bigg(- \frac{\log (n)}{1- p_{n, N} +\frac{\sqrt{2}}{3} \max(p_{n, V}, 1 - p_{n, V}) \sqrt{\frac{\log(n)}{N}} }   \Bigg) \notag\\
 &\leq 2 n^{-1/2},  \label{Nhat_over_N_tail_bdd}
\end{align}
where the last inequality uses the assumption $\log(n) \leq N$. 
On the other hand, since $\bar{\Phi}(z) \leq  \min \big(\frac{1}{2}, \frac{1}{z \sqrt{2 \pi}} \big) e^{-z^2/2}$ for any $z > 0$ \citep[p.16]{chen2010normal},
%\[
%P\Bigg( \Bigg|\frac{\sqrt{n} W_n}{\sigma}\Bigg| > z \Bigg) \leq
%  \Bigg| P\Bigg( \Bigg|\frac{\sqrt{n} W_n}{\sigma}\Bigg| > z \Bigg)  - \bar{\Phi}(z)\Bigg|
%  + \min \Bigg(\frac{1}{2}, \frac{1}{z \sqrt{2 \pi}} \Bigg) e^{-z^2/2},
%\]
%taking $z = \sqrt{2 \log n}$
\begin{equation} \label{scaled_Wn_tail_bdd}
P_H\Bigg( \Bigg|\frac{\sqrt{n} W_n}{\sigma}\Bigg| > \sqrt{2 \log n} \Bigg) \leq    \sup_{z \in \bR}\Bigg| P_H\Bigg( \Bigg|\frac{\sqrt{n} W_n}{\sigma}\Bigg| > z \Bigg)  - \bar{\Phi}(z)\Bigg| + \frac{1}{2 n}.
\end{equation}
Since  
$
|\Phi(z) - \Phi(z + t)| \leq \phi(0) |t| \leq \frac{|t|}{\sqrt{2 \pi}}
$ for any $z, t \in \bR$, combining the two tail inequalities \eqref{Nhat_over_N_tail_bdd}  and \eqref{scaled_Wn_tail_bdd}, from \eqref{CK_decomposition_rescale}, 
%we have the sandwich inequalities
%\begin{multline*}
%P\Big(\frac{\sqrt{n} W_n}{\sigma} \leq z -  \frac{2 \log n}{\sqrt{N}}\Big) - 2 n^{-1/2} - \Bigg| P\Bigg( \Bigg|\frac{\sqrt{n} W_n}{\sigma}\Bigg| > z \Bigg)  - \bar{\Phi}(z)\Bigg|+ \frac{1}{2 n}\\
%\leq P\Big(\frac{\sqrt{n}U_{n, N}' }{\sigma} \leq z \Big) \leq\\
% P\Big(\frac{\sqrt{n} W_n}{\sigma} \leq z +  \frac{2 \log n}{\sqrt{N}}\Big) + 2 n^{-1/2} + \Bigg| P\Bigg( \Bigg|\frac{\sqrt{n} W_n}{\sigma}\Bigg| > z \Bigg)  - \bar{\Phi}(z)\Bigg|+ \frac{1}{2 n}. \\
%\end{multline*}
%Since for any $z \geq 0 $, 
%$
%|\Phi(z) - \Phi(z + t)| \leq \phi(z) t \leq \frac{t}{\sqrt{2 \pi}}
%$,
%from the sandwich inequality above
 we further have 
\begin{equation} \label{general_BE_bdd_for_rescaled_icu_stat}
\Bigg|P_{H}\Bigg(\frac{\sqrt{n}U_{n, N}' }{\sigma} \leq z \Bigg)- \Phi(z) \Bigg| \leq C \Bigg\{\frac{1}{\sqrt{n}} + \frac{\log n}{\sqrt{N}} +   \sup_{z \in \bR} \Bigg| P_{H}\Bigg(\frac{\sqrt{n} W_n}{\sigma} \leq z\Bigg) - \Phi(z) \Bigg| \Bigg\}.
\end{equation}
So for proving  \thmref{main}, it suffices to prove a B-E bound of the same form for 
\begin{equation*} \label{LHS_BE_bdd_Wn}
\sup_{z \in \bR}\Bigg|P_{H}\Bigg(\frac{\sqrt{n} W_n}{\sigma} \leq z\Bigg) - \Phi(z) \Bigg|
\end{equation*}
 on the right hand side of \eqref{general_BE_bdd_for_rescaled_icu_stat}, which will utilize the decomposition of $W_n$ into a sum between $U_n$ and $B_n$ in \eqref{Wn_decomp}.  Note that, conditional on the observed data $\{{\bf X}_i\}_{ i \in [n]}$, $B_n$ is a sum of independent terms driven by the randomness of the Bernoulli flips $Z_{\bf i}$, so it is expected to follow some form of central limit theorem. In particular, the following B-E bound for $B_n$  proven  in \appref{BE_bdd_Bn_pf_sec} will be useful.

\begin{lemma}[B-E bound for $B_n$] \label{lem:BE_bdd_Bn} Under the same assumptions as \thmref{main}, we have  a B-E bound of the form
\begin{equation*} \label{BE_Bn_eta_one_third}
\sup_{z' \in \bR}\Bigg| P_H\Bigg(  \frac{ \sqrt{N} B_n }{\sigma_h } \leq  z'\Bigg) -  \Phi(z') \Bigg| \leq  \frac{C}{\sqrt{N}} \frac{\sqrt{1 - 3p_{n, N} + 3p_{n, N}^2}}{1 - p_{n, N}} 
  +  \frac{C(m)}{\sqrt{n}}.
\end{equation*}
\end{lemma}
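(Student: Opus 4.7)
The plan is to work conditionally on the data $\bX_1, \dots, \bX_n$, so that, writing $h_{\bf i} \equiv h(\bX_{i_1}, \dots, \bX_{i_m})$, the statistic $\sqrt{N} B_n = \frac{1}{\sqrt{N}}\sum_{{\bf i} \in I_{n,m}} h_{\bf i}(Z_{\bf i} - p_{n,N})/\sqrt{1 - p_{n,N}}$ becomes a weighted sum of independent, centered Bernoulli-type increments driven purely by the randomness of the $Z_{\bf i}$. A direct computation gives the conditional variance $V_n^2 \equiv \var(\sqrt{N} B_n \mid \text{data}) = \binom{n}{m}^{-1} \sum_{\bf i} h_{\bf i}^2$, which is itself a U-statistic with mean $\sigma_h^2$; analogously, the associated conditional third absolute-moment sum is governed by $M_n \equiv \binom{n}{m}^{-1} \sum_{\bf i} |h_{\bf i}|^3$, a U-statistic with mean $\bE_H |h|^3$. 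I would then apply a classical Lyapunov-type Berry-Esseen bound to the conditional normalized sum $\sqrt{N} B_n / V_n$, which yields
\begin{equation*}
\sup_{z'} \bigl| P_H\bigl(\sqrt{N} B_n / V_n \le z' \mid \text{data}\bigr) - \Phi(z')\bigr| \le \frac{C\,\beta(p_{n,N})}{\sqrt{N}} \cdot \frac{M_n}{V_n^3},
\end{equation*}
where the Bernoulli factor $\beta(p)$ arising from $\bE|Z-p|^3$ and $p(1-p)$ simplifies after algebra to $((1-p)^2 + p^2)/\sqrt{1-p}$, which is pointwise dominated on $[0,1)$ by the paper's $\sqrt{1 - 3p + 3p^2}/(1-p)$ up to an absolute constant.

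To pass from $V_n$ to $\sigma_h$ in the denominator while integrating out the conditioning, I would decompose
\begin{equation*}
P_H\bigl(\sqrt{N} B_n/\sigma_h \le z'\bigr) - \Phi(z') = \bE_H\Bigl[ P_H\bigl(\tfrac{\sqrt{N} B_n}{V_n} \le \tfrac{z' V_n}{\sigma_h} \,\big\vert\, \text{data}\bigr) - \Phi\bigl(\tfrac{z' V_n}{\sigma_h}\bigr)\Bigr] + \bE_H\bigl[\Phi(z' V_n/\sigma_h) - \Phi(z')\bigr].
\end{equation*}
The first expectation inherits the conditional Berry-Esseen bound, so I would need to control $\bE_H[M_n/V_n^3]$. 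I would do this by truncating on the event $\cA = \{V_n \ge \sigma_h/2\}$: on $\cA$ one has $M_n/V_n^3 \le 8 M_n/\sigma_h^3$ with $\bE_H M_n = \bE_H|h|^3 \le C(m)\sigma_h^3$ via the hypercontractivity \lemref{hypercontraction} applied to $h$ as a degree-$2m$ polynomial in jointly Gaussian variables per \eqref{breve_H}--\eqref{polynomial_kernel}; off $\cA$ I use the trivial bound $1$ together with $P_H(\cA^c) \le C(m)/n$, obtained by Chebyshev from the U-statistic variance estimate $\var_H(V_n^2) \le C(m)\sigma_h^4/n$, where the latter in turn relies on $\bE_H h^4 \le C(m)\sigma_h^4$ by hypercontractivity again.

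For the second expectation, I would invoke the uniform smoothness estimate $|\Phi(cz) - \Phi(z)| \le C|c - 1|$ valid for $c \ge 0$ and $z \in \bR$ (which follows from $\sup_u |u\phi(u)| < \infty$) to obtain $\bE_H|\Phi(z' V_n/\sigma_h) - \Phi(z')| \le C \bE_H|V_n/\sigma_h - 1|$, and then bound $\bE_H|V_n/\sigma_h - 1| \le C(m)/\sqrt{n}$ by writing $|V_n - \sigma_h| \le |V_n^2 - \sigma_h^2|/\sigma_h$ and invoking the same variance estimate. This contributes the $C(m)/\sqrt{n}$ term in the claimed bound. The main obstacle is the delicate truncation for $\bE_H[M_n/V_n^3]$: pitting the conditional Berry-Esseen estimate against the reciprocal of a random denominator that may approach zero requires showing that $V_n$ concentrates around $\sigma_h$ in a way that is quantitatively uniform in the singularity status of $H$. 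Hypercontractivity is essential here, since it controls $\bE_H h^4/\sigma_h^4$ and $\bE_H|h|^3/\sigma_h^3$ by constants depending only on $m$, regardless of whether $\sigma_g$ vanishes, thereby preserving the singularity-agnostic character of the bound.
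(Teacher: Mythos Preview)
Your proposal is correct and shares the paper's overall architecture: condition on the data so that $B_n$ becomes a weighted sum of centered Bernoulli increments, apply a conditional Berry--Esseen bound, then split the unconditional error into (i) the expected conditional BE error and (ii) the cost of swapping the random conditional scale $V_n$ for $\sigma_h$. The execution, however, is more elementary than the paper's. The paper bounds the conditional Lyapunov sum via Cauchy--Schwarz, replacing $\sum |h_{\bf i}|^3$ by $\sqrt{U_{h^4}}$, and then needs concentration for \emph{both} $U_{h^2}$ and $U_{h^4}$; it obtains this by Hoeffding-decomposing each, applying a sub-Weibull Bernstein inequality to the first-order projections and Markov to the rest, all parametrized by an auxiliary exponent $\eta\in(3/7,1/2)$. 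You instead keep the third-moment U-statistic $M_n$ directly, truncate on $\{V_n\ge\sigma_h/2\}$, and control $P_H(V_n<\sigma_h/2)$ by plain Chebyshev from $\var_H(U_{h^2})\le C(m)\sigma_h^4/n$ (hypercontractivity), which already yields the $C(m)/n$ you need without the sub-Weibull machinery. Your Bernoulli factor $((1-p)^2+p^2)/\sqrt{1-p}$ is indeed dominated by the paper's $\sqrt{1-3p+3p^2}/(1-p)$, since $((1-p)^2+p^2)^2\le(1-p)^3+p^3=1-3p+3p^2$. One small caveat: the smoothness estimate $|\Phi(cz)-\Phi(z)|\le C|c-1|$ for all $c\ge 0$ is true, but the stated justification via $\sup_u|u\phi(u)|<\infty$ only covers $c$ bounded away from $0$ and $\infty$; for $|c-1|\ge 1/2$ you should also invoke the trivial bound $|\Phi(cz)-\Phi(z)|\le 1\le 2|c-1|$. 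The paper sidesteps this by restricting to the concentration event before Taylor-expanding.
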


From this point on, we will separate the proof  into the regular and singular cases corresponding to $\sigma_g \neq 0$ and $\sigma_g = 0$. 
 We will focus on proving \thmref{main}$(i)$ here, where another technical  innovation lies due to  the need to handle the delicate near-singular situations with a \emph{nonuniform} Berry-Esseen bound for $U_n$. The proof of \thmref{main}$(ii)$  is deferred to \appref{pf_degenerate_case}.

\subsection{Proof of \thmref{main}$(i)$} \label{sec:pf_main_part_one}

To bound $\sup_{z \in \bR} |\PH(\frac{\sqrt{n} W_n}{\sigma} \leq z) - \Phi(z) |$ in \eqref{general_BE_bdd_for_rescaled_icu_stat} when $\sigma_g >0$, it suffices to consider $z \geq 0$ only, or else one can replace the kernel $h(\cdot)$ with $-h(\cdot)$. Let $Z_1$ and $Z_2$ be two independent $\cN(0,   1)$ random variables that are also independent of the data $\{{\bf X}_i\}_{i \in [n]}$, and use $p = p_{n, N}$ and $\alpha= \alpha_n$ to simplify notation. From \eqref{Wn_decomp}, we write the chain of inequalities
\begin{align*}
&\PH\Bigg(\frac{\sqrt{n} W_n}{\sigma} \leq z \Bigg) \\
& =  \bEH\Bigg[ P\left( \frac{\sqrt{N} B_n }{\sigma_h}\leq 
\frac{z \sigma}{\sigma_h\sqrt{ \alpha (1- p)}} 
-  \sqrt{\frac{N}{1 - p}} \frac{U_n}{\sigma_h} \Bigg| \{{\bf X}_i\}_{i \in [n]} \right) \Bigg]\\
&\leq P_H\left( Z_1 \leq \frac{z \sigma}{\sigma_h\sqrt{\alpha (1- p)}}- \sqrt{\frac{N}{1 - p}} \frac{U_n}{\sigma_h}\right) + \sup_{z' \in \bR} \Big| P(  \sqrt{N} B_n \sigma_h^{-1} \leq  z' ) -  \Phi(z')\Big|\\
&= \bE \Bigg[ P_H\Bigg( \frac{\sqrt{n} U_n }{m \sigma_g} \leq \frac{z \sigma}{m \sigma_g} - \frac{ \sigma_h\sqrt{ \alpha (1- p) } }{m \sigma_g} Z_1 \Bigg| Z_1 \Bigg) \Bigg]+ \sup_{z' \in \bR}| P(  \sqrt{N} B_n \sigma_h^{-1} \leq  z' ) -  \Phi(z')|\\
&\leq  P\Big(Z_2 \leq  \frac{z \sigma}{m \sigma_g} - \frac{ \sigma_h\sqrt{ \alpha (1- p) } }{m \sigma_g} Z_1\Big) + \sup_{z' \in \bR}| P(  \sqrt{N} B_n \sigma_h^{-1} \leq  z' ) -  \Phi(z')| + \varepsilon_1\\
%&= P\Bigg(\frac{m \sigma_g Z_2 + \sigma_h \sqrt{\alpha (1- p)} Z_1}{\sigma} \leq z\Bigg) +\sup_{z' \in \bR}| P(  \sqrt{N} B_n \sigma_h^{-1} \leq  z' ) -  \Phi(z')| + \varepsilon_1 \\
&= \Phi \Bigg(\frac{\sigma z}{\sqrt{m^2  \sigma_g^2+  \alpha (1 - p) \sigma_h^2}}  \Bigg) + \sup_{z' \in \bR}| P(  \sqrt{N} B_n \sigma_h^{-1} \leq  z' ) -  \Phi(z')| + \varepsilon_1 \\
&\leq \Phi(z) +\sup_{z' \in \bR}| P(  \sqrt{N} B_n \sigma_h^{-1} \leq  z' ) -  \Phi(z')| + \varepsilon_1 + \varepsilon_2,
\end{align*}
where
\begin{equation} \label{varep1_def}
\varepsilon_1 \equiv \bE\Bigg[  \Big| P_H\Big(\frac{\sqrt{n} U_n }{m\sigma_g} \leq  \frac{z \sigma}{m \sigma_g} - \frac{ \sigma_h\sqrt{ \alpha (1- p) } }{m \sigma_g} Z_1 \Big| Z_1 \Big) - \Phi\Big( \frac{z \sigma}{m \sigma_g} - \frac{ \sigma_h\sqrt{ \alpha (1- p) } }{m \sigma_g} Z_1\Big)  \Big| \Bigg],
\end{equation}
and
\[
\varepsilon_2 \equiv \Bigg| \Phi\Bigg(\frac{\sigma z}{\sqrt{m^2  \sigma_g^2+  \alpha (1 - p) \sigma_h^2}}  \Bigg) - \Phi(z)\Bigg|;
\]
the last equality above uses that $m \sigma_g Z_2 + \sigma_h \sqrt{\alpha (1- p)} Z_1$ is distributed as a $\cN(0, m^2  \sigma_g^2+  \alpha (1 - p) \sigma_h^2)$ random variable. By an analogous and reverse argument, 
\[
P\Bigg(\frac{\sqrt{n} W_n}{\sigma} \leq z \Bigg)  \geq \Phi(z)  - \sup_{z' \in \bR}| P(  \sqrt{N} B_n \sigma_h^{-1} \leq  z' ) -  \Phi(z')| - \varepsilon_1 - \varepsilon_2,
\]
which together with the previous inequality implies 
\begin{equation} \label{BE_bdd_for_Wn_non_degen}
\Bigg|P\Bigg(\frac{\sqrt{n} W_n}{\sigma} \leq z \Bigg)  - \Phi(z) \Bigg| \leq \sup_{z' \in \bR}| P(  \sqrt{N} B_n \sigma_h^{-1} \leq  z' ) -  \Phi(z')|+ \varepsilon_1 + \varepsilon_2.
\end{equation}
Combining \eqref{general_BE_bdd_for_rescaled_icu_stat}, \eqref{BE_bdd_for_Wn_non_degen}, and \lemref{BE_bdd_Bn}, to finish proving  \thmref{main}$(i)$, it suffices to let $\mathfrak{R} = \varepsilon_1 + \varepsilon_2$ and form a bound for it by  bounding $\varepsilon_1$ and $\varepsilon_2$ separately:

\subsubsection{Bound on $\varepsilon_1$} 
%[\textcolor{red}{NEED TO SPELL OUT THE NONUNIFORM BE BOUND  AT SOME POINT}]
A critical ingredient to develop our bound for $\varepsilon_1$ is the following \emph{nonuniform} B-E bound for $U_n$ with  terms that decay exponentially in the magnitude of the argument in the distribution function.

\begin{lemma} [A nonuniform B-E bound for $U_n$] \label{lem:nonunif_BE}
Let $U_n$ be constructed as in \eqref{complete_U_stat} with the kernel $h$ in  \eqref{polynomial_kernel} and  data from \eqref{iid_data_sample};  assume  $2m<n$. For any $x \in \bR$,  there exist constants $C_1, C_2 >0$ such that
\begin{multline*}
\Bigg|\PH\Bigg( \frac{\sqrt{n}}{m \sigma_g} U_n  \leq x \Bigg) -\Phi(x)\Bigg| \leq C_1 \frac{\bE_{H}[|g|^3]}{\sqrt{n}\sigma_g^3} + \\
\PH\left(\Bigg|\frac{\sqrt{n}}{ m \sigma_g} \sum_{r=2}^m {m \choose r} U_n^{(r)}\Bigg| >  \frac{|x| +1}{3} \right) +   C_2 e^{-|x|/6} \Bigg( 
%\frac{\sqrt{2}(m-1) \sigma_h}{ \sqrt{m(n-m+1)}\sigma_g} +
\frac{ (\sqrt{2}+2)(m-1) \sigma_h}{ \sqrt{m (n-m+1)}\sigma_g} \Bigg).
\end{multline*}
\end{lemma}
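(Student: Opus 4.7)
The plan is to exploit the Hoeffding decomposition
$$\frac{\sqrt{n}U_n}{m\sigma_g} = T + R, \qquad T := \frac{1}{\sqrt{n}\sigma_g}\sum_{i=1}^n g(\bX_i), \qquad R := \frac{\sqrt{n}}{m\sigma_g}\sum_{r=2}^{m}{m \choose r}U_n^{(r)}.$$
Here $T$ is a standardized i.i.d.\ sum with variance $1$ under $H$, and canonicity of the Hoeffding projections gives $\bEH[R]=0$ together with the orthogonality-based variance estimate $\bEH[R^2] \leq (n/(m^2\sigma_g^2))\sum_{r=2}^m {m \choose r}^2{n \choose r}^{-1}\sigma_h^2$, whose leading ($r=2$) term is of order $(m-1)^2\sigma_h^2/[m(n-m+1)\sigma_g^2]$.

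Setting $\eta := (|x|+1)/3$, I will first apply the standard two-sided truncation
$$\PH(T \leq x-\eta) - \PH(|R|>\eta) \;\leq\; \PH\!\Big(\tfrac{\sqrt{n}U_n}{m\sigma_g} \leq x\Big) \;\leq\; \PH(T \leq x+\eta) + \PH(|R|>\eta),$$
which immediately peels off the term $\PH(|R| > (|x|+1)/3)$ in the stated bound. The residuals $|\PH(T \leq x\pm\eta) - \Phi(x)|$ split via the triangle inequality into (i) $|\PH(T \leq x\pm\eta) - \Phi(x\pm\eta)|$, controlled by the classical uniform Berry--Esseen bound for i.i.d.\ sums at the rate $C_1\bEH[|g|^3]/(\sqrt{n}\sigma_g^3)$; and (ii) the Gaussian gap $|\Phi(x\pm\eta)-\Phi(x)|$.

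To deliver the third term with its $C_2 e^{-|x|/6}(m-1)\sigma_h/[\sqrt{m(n-m+1)}\sigma_g]$ form, I will not bound the Gaussian gap by $\eta\sup\phi$ alone; instead the plan is to (a) verify the density estimate $\sup_{|t-x|\le \eta}\phi(t) \leq C e^{-|x|/6}$ directly from $\phi(t)= e^{-t^2/2}/\sqrt{2\pi}$, by checking $(2|x|-1)^2/18 \ge |x|/6$ for large $|x|$ and absorbing bounded-$|x|$ cases into the constant; and (b) couple this with a first-order Taylor identity $\Phi(x-R)-\Phi(x)=-R\phi(\xi)$ and Cauchy--Schwarz $\bEH[|R|] \leq \sqrt{\bEH[R^2]}$, so that the $\eta$ in the Gaussian gap is effectively replaced by the smaller $\sqrt{\bEH[R^2]} \sim (m-1)\sigma_h/[\sqrt{m(n-m+1)}\sigma_g]$.

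The main obstacle will be producing both the exponential decay in $|x|$ and only the \emph{first} power of $\sigma_h/(\sqrt{n}\sigma_g)$---a second-order Taylor expansion paired with $\bEH[R] = 0$ would give the square of the desired prefactor, while the conditioning is subtle because $T$ and $R$ are not independent (although they are uncorrelated via canonicity). The trick is to keep only the first-order Taylor term and invoke Cauchy--Schwarz at the right moment, while maintaining uniformity of the Gaussian decay estimate over all $x\in\mathbb{R}$ by absorbing the small-$|x|$ regime into $C_2$.
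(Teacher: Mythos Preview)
Your truncation-and-Taylor plan has a genuine gap at the step where you hope to ``effectively replace $\eta$ by $\sqrt{\bEH[R^2]}$''. Once you truncate at the deterministic level $\eta=(|x|+1)/3$, the residual you must control is $|\Phi(x\pm\eta)-\Phi(x)|$, and this quantity depends only on $x$: it carries no factor of $\sigma_h/(\sqrt{n}\,\sigma_g)$ at all and does not shrink with $n$. For instance, at $x=1$ one has $\Phi(1)-\Phi(1/3)\approx 0.21$, an order-one constant. So the truncation route produces an extra $O(1)$ term that is simply absent from the lemma's right-hand side. Your attempted fix---switching to the Taylor identity $\Phi(x-R)-\Phi(x)=-R\,\phi(\xi)$ and bounding via $\bEH|R|$---is a different decomposition, namely
\[
\PH(T+R\le x)-\Phi(x)=\bEH\!\big[I(T\le x-R)-\Phi(x-R)\big]+\bEH\!\big[\Phi(x-R)-\Phi(x)\big],
\]
and its first piece cannot be handled by the i.i.d.\ Berry--Esseen bound because $T$ and $R$ are \emph{not} independent (you note this yourself, but offer no mechanism to overcome it). Uncorrelatedness from canonicity is far too weak here.

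The paper's proof does not attempt either of these shortcuts. After peeling off $\PH(|\Delta|>(|x|+1)/3)$ and the i.i.d.\ Berry--Esseen term for $S$, the remaining piece is of the form $\PH(x-|\bar\Delta_x|\le \bar S\le x)$ with censored variables, and the key device is an \emph{exponential randomized concentration inequality} from the Stein's-method literature (\lemref{modified_RCI_bdd}), which bounds $\bEH[e^{\bar S/2}I(\Delta_1\le\bar S\le\Delta_2)]$ in terms of leave-one-out differences $|\Delta-\Delta^{(i)}|$ and moments like $\|\Delta\|_{2,H}$. Combined with Bennett's inequality for the censored sum (\lemref{bennett_censored}), this is what produces the factor $e^{-|x|/6}$ \emph{multiplied by} $\|\Delta\|_{2,H}\asymp (m-1)\sigma_h/\big(\sqrt{m(n-m+1)}\,\sigma_g\big)$. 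The leave-one-out structure is precisely what handles the dependence between $S$ and $\Delta$ that blocks your conditioning argument.
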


The proof is  in \appref{nonunif_BE_pf_sec}. Note that the first term $C_1\bEH[|g|^3]/(\sqrt{n}\sigma_g^3)$ in the bound has no dependence on $x$ so \lemref{nonunif_BE} is, strictly speaking, ``partially nonuniform". With more effort, we could have  made this term dependent on $x$ as well, which is not necessarily for our purpose; the  crucial part  is that the last term containing the ratio $\sigma_h/\sigma_g$ from \eqref{Achilles_heel} decays in $|x|$  \emph{exponentially}, which provides a pathway for eliminating this ratio in our final B-E bound for $U'_{n, N}$. 

Now, we define
$
\mathfrak{z}_{z,Z_1} \equiv \frac{z \sigma}{m \sigma_g} - \frac{ \sigma_h\sqrt{ \alpha (1- p) } }{m \sigma_g} Z_1
$
 , with the understanding that $\mathfrak{z}_{z,Z_1}$ depends on both $z$ and $Z_1$. Moreover, it is easy to see that
 \begin{equation} \label{frakz_normality}
 \mathfrak{z}_{z,Z_1} \sim \cN \Big(\frac{z \sigma}{m \sigma_g},  \tau^2\Big)
 \end{equation}
 with randomness driven by $Z_1$, where  $\tau \equiv \frac{\sqrt{ \alpha (1 - p)} \sigma_h}{m \sigma_g}$. By 
treating $Z_1$  as fixed and apply \lemref{nonunif_BE} to the absolute difference inside the expectation operator in the definition of $\varepsilon_1$ in  \eqref{varep1_def}, we get
\begin{multline} \label{after_apply_nonunifBE}
\varepsilon_1 \leq \frac{C_1\bE_{H}[|g|^3]}{\sqrt{n}\sigma_g^3} + \frac{C_2 (\sqrt{2}+2)(m-1) \sigma_h}{ \sqrt{m (n-m+1)}\sigma_g} \bE_{Z_1}\Big[  e^{-|\frakz_{z, Z_1}|/6}  \Big] + \\
\bE_{Z_1}\Big[ P_H\Big(\Big|\frac{\sqrt{n}}{ m \sigma_g} \sum_{r=2}^m {m \choose r} U_n^{(r)}\Big| >  \frac{|\frakz_{z, Z_1} |+1}{3} \Big| Z_1 \Big) \Big] 
,
\end{multline}
where the subscript in $\bE_{Z_1}[\cdot]$ emphasizes that the expectation is driven by the random variable $Z_1$.   We now have to bound the  two expectations of the type $\bE_{Z_1}[\cdot]$  on the right hand side of \eqref{after_apply_nonunifBE}. For the first one, we note from \eqref{frakz_normality} that
\[
\bE_{Z_1}\Big[  e^{-|\frakz_{z, Z_1}|/6}  \Big] = \int_{\bR} e^{-|x|/6} \phi\Bigg( \frac{x - \frac{z \sigma}{m \sigma_g}}{\tau}\Bigg) dx;
\]
since $e^{-|x|/6}$ is symmetrically decreasing as $|x| \longrightarrow \infty$, intuitively, $\bE_{Z_1}\Big[  e^{-|\frakz_{z, Z_1}|/6}  \Big]$ should attain the largest value when the highest mass from the $\cN (\frac{z \sigma}{m \sigma_g},  \tau^2)$ density can be placed at $x = 0$, which happens when $z = 0$. Indeed, that $\bE_{Z_1}[  e^{-|\frakz_{z, Z_1}|/6} ] \leq \bE_{Z_1}[  e^{-|\frakz_{0, Z_1}|/6} ] = \frac{2}{ \sqrt{ 2 \pi } \tau}\int_{0}^\infty  e^{-x/6} \exp\Big( - \frac{x^2}{2 \tau^2}\Big)dx$ is implied by the \emph{rearrangement inequality} \citep[Theorem 3.4, p.82]{LL2001}, so we can proceed using ``completing the square" as
\begin{align}
\bE_{Z_1}\Bigg[  e^{-|\frakz_{z, Z_1}|/6} \Bigg] 
&\leq 
 \frac{2}{ \sqrt{ 2 \pi } \tau}\int_{0}^\infty  e^{-x/6} \exp\Big( - \frac{x^2}{2 \tau^2}\Big)dx \notag\\
%&= \exp\left(  \frac{\tau^26^{-2}}{2}\right) \frac{2}{\sqrt{2 \pi} \tau}
%\int_0^\infty \exp\left( -\frac{ x^2}{2 \tau^2} + \frac{2 \tau^2( - 6^{-1}) x}{2 \tau^2} - \frac{\tau^46^{-2}}{2 \tau^2}\right) dx \\
&=\exp\left( \frac{\tau^2 }{72}\right) \frac{2}{\sqrt{2 \pi} \tau} \int_0^\infty \exp\left( -   \frac{(x  - (-  6^{-1}\tau^2))^2}{2 \cdot \tau^2} \right) dx \notag\\
&= 2 \exp\left( \frac{\tau^2 }{72}\right) \barPhi(   \tau/6) \notag\\
&\leq 2 \exp\left( \frac{\tau^2 }{72}\right) \min\left(\frac{1}{2} , \frac{6}{ \tau \sqrt{2 \pi}}  \right) \exp\left( \frac{- \tau^2}{72}\right) \notag\\
&= 2 \min\left(\frac{1}{2} , \frac{6}{ \tau \sqrt{2 \pi}}  \right) 
\label{upper_bdd_nonunif_exp_term},
\end{align}
where the last inequality uses the well-known $\barPhi(x) \leq \min(\frac{1}{2}, \frac{1}{x \sqrt{2 \pi}})e^{-x^2/2}$ for $x >0$ \citep[p.16]{chen2010normal}.
 For the second  $\bE_{Z_1}[\cdot]$ expectation in  \eqref{after_apply_nonunifBE},  we first apply a union bound to the tail probability   and  obtain 
\begin{multline} \label{union_bdd}
\PH\Big(\Big|\frac{\sqrt{n}}{ m \sigma_g} \sum_{r=2}^m {m \choose r} U_n^{(r)}\Big| >  \frac{|\frakz_{z, Z_1} |+1}{3}  \Big| Z_1 \Big)  
\leq \\
 \sum_{r=2}^m \PH\Big(|\sqrt{n} U_n^{(r)}| >  \frac{m \sigma_g(|\frakz_{z, Z_1}| +1)}{3{m \choose r}  (m-1)}  \Big| Z_1 \Big);
\end{multline}
each  summand on the right hand side can be bounded using \lemref{Bernstein_ineq_h_proj} as
\begin{multline} \label{each_summand_sub_Gauss_tail_prob}
\PH\Big(|\sqrt{n} U_n^{(r)}| >  \frac{m \sigma_g(|\frakz_{z, Z_1}| +1)}{3{m \choose r}  (m-1)} \Big| Z_1 \Big) \\
 \leq
  \begin{cases} 
 C(m) \exp \Big( - \frac{c(m) \sigma_g^2\frakz_{z, Z_1}^2 n^{r-1}}{\sigma_h^2}\Big)& \text{if } |\frakz_{z, Z_1}| +1 \leq  \frakc_m  n^{\frac{1-r}{2}} \frac{\sigma_h}{\sigma_g};\\
   \frac{C(m)}{n}   & \text{if }  |\frakz_{z, Z_1}| +1 >  \frak{C}_m   n^\frac{1- r}{2}(\log (2n^r +1))^{3r} \frac{\sigma_h}{\sigma_g},
  \end{cases}
\end{multline}
for some special constants $\frakc_m, \frak{C}_m> 0$ depending only on $m$, whose values will \emph{not} change in their ensuing reoccurrences. Combining \eqref{union_bdd} and \eqref{each_summand_sub_Gauss_tail_prob} and take the outer expectation $\bE_{Z_1}[\cdot]$, we get
\begin{align}
&\bE_{Z_1}\Big[P_H\Big(\Big|\frac{\sqrt{n}}{ m \sigma_g} \sum_{r=2}^m {m \choose r} U_n^{(r)}\Big| >  \frac{|\frakz_{z, Z_1} |+1}{3}  \Big| Z_1 \Big)  \Big]  \notag \\
%&\leq C(m) \sum_{r=2}^m \Bigg\{\bE_{Z_1}\Big[ \exp \Big( - \frac{c(m) \sigma_g^2\frakz_{z, Z_1}^2 n^{r-1}}{\sigma_h^2}\Big) \Big] \notag \\
%& \quad + \bE_{Z_1} \Big[   I\Big(  \frakc_m  n^{\frac{1-r}{2}} \frac{\sigma_h}{\sigma_g} < |\frakz_{z, Z_1} |+1 \leq  \frak{C}_m   \frac{ n^{1/2}(\log (2n^r +1))^{3r}}{n^{r/2}}\frac{\sigma_h}{\sigma_g}\Big)\Big] 
%+ \frac{1}{n}\Bigg\} \notag \\
&\leq \frac{C_1(m)}{n} + C_2(m) \sum_{r=2}^m \Bigg\{\bE_{Z_1}\Big[ \exp \Big( - \frac{c(m) \sigma_g^2\frakz_{0, Z_1}^2 n^{r-1}}{\sigma_h^2}\Big) \Big] \notag \\
& \quad + \bE_{Z_1} \Big[   I\Big(  \frakc_m  n^{\frac{1-r}{2}} \frac{\sigma_h}{\sigma_g} -1 < |\frakz_{z, Z_1} | \leq  \frak{C}_m    n^\frac{1-r}{2}(\log (2n^r +1))^{3r}\frac{\sigma_h}{\sigma_g} -1\Big)\Big] 
\Bigg\}. \label{overarch_expect_Z1_bdd}
\end{align}
Now we bound the two $\bE_{Z_1}[\cdot]$ expectations in \eqref{overarch_expect_Z1_bdd}. First, from \eqref{frakz_normality} we have
\begin{align}
&\bE_{Z_1}\Big[ \exp \Big( - \frac{c(m) \sigma_g^2\frakz_{0, Z_1}^2 n^{r-1}}{\sigma_h^2}\Big) \Big] \notag\\
&= \frac{m \sigma_g}{ \sqrt{ 2 \pi \alpha (1 - p) \sigma_h}  }\int_{\bR} 
 \exp \Bigg\{ - \Bigg[ \frac{ \sigma_g^2  }{\sigma_h^2}  \Bigg( c(m)  n^{r-1} + \frac{m^2  }{2 \alpha (1-p)}\Bigg)\Bigg] x^2 \Bigg \}dx \notag\\
%&= \frac{m }{ \sqrt{ \alpha (1 - p) }  } \frac{\sqrt{\alpha (1 - p)}}{\sqrt{2 \alpha (1 - p) c(m) n^{r-1} + m^2}}\\
&= \frac{m }{\sqrt{2 \alpha (1 - p) c(m) n^{r-1} + m^2}} \notag \\
&\leq 
%\frac{C(m)}{\sqrt{ \alpha (1- p)  n^{r-1}}} = 
\frac{C(m)\sqrt{N}}{\sqrt{  \big(1- p\big)  n^r}}. \label{first_expect_Z1_bdd}
\end{align}
Second, from the normal distribution of $\frakz_{z, Z_1}$ in \eqref{frakz_normality} again,
\begin{align}
& \bE_{Z_1} \Big[   I\Big(  \frakc_m  n^{\frac{1-r}{2}} \frac{\sigma_h}{\sigma_g} -1 < |\frakz_{z, Z_1} | \leq  \frak{C}_m    n^\frac{1-r}{2}(\log (2n^r +1))^{3r}\frac{\sigma_h}{\sigma_g} -1\Big)\Big] \notag  \\
&\leq   \frac{n^{\frac{1-r}{2}}\sigma_h}{\sqrt{2 \pi } \tau \sigma_g} (\frak{C}_m (\log (2n^r +1))^{3r} - \frakc_m) \notag\\
&\leq   \frac{C(m) \sqrt{N}[(\log (2n^r +1))^{3r} - 1]}{\sqrt{ (1 - p)n^{r} }} \label{second_expect_Z1_bdd}
\end{align}
Now, gathering  \eqref{overarch_expect_Z1_bdd}, \eqref{first_expect_Z1_bdd} and \eqref{second_expect_Z1_bdd}, we obtain
\begin{multline} \label{upper_bdd_expect_Z1_tail_prob}
\bE_{Z_1}\Bigg[P_H\Bigg(\Big|\frac{\sqrt{n}}{ m \sigma_g} \sum_{r=2}^m {m \choose r} U_n^{(r)}\Big| >  \frac{|\frakz_{z, Z_1} |+1}{3}  \Bigg| Z_1 \Bigg)  \Bigg] \leq \\
 C(m) \Bigg\{ \frac{1}{n} +  \sum_{r=2}^m\frac{ \sqrt{N}(\log (2n^r +1))^{3r} }{\sqrt{ (1 - p)n^{r} }} \Bigg\}.
\end{multline}
Finally, applying both the bounds \eqref{upper_bdd_nonunif_exp_term} and \eqref{upper_bdd_expect_Z1_tail_prob} to \eqref{after_apply_nonunifBE}, we get
\begin{align} 
\varepsilon_1 &\leq \frac{\bE[|g|^3]}{\sqrt{n}\sigma_g^3} +  
 C(m) \Bigg\{   \sum_{r=2}^m\frac{ \sqrt{N}(\log (2n^r +1))^{3r} }{\sqrt{ (1 - p)n^{r} }} \Bigg\} \notag\\
 &\leq   \frac{C}{\sqrt{n}} +  
 C(m) \Bigg\{   \sum_{r=2}^m\frac{ \sqrt{N}(\log (2n^r +1))^{3r} }{\sqrt{ (1 - p)n^{r} }} \Bigg\}  \label{varep1_ultimate_bdd},
\end{align}
where the last inequality uses the hypercontractivity of $g({\bf X})$ from \eqref{hypercontractivity_of_g}.

 \subsubsection{Bound on $\varepsilon_2$} To simplify notation we let $\tilde{\sigma} = \sqrt{m^2  \sigma_g^2+  \alpha (1 - p) \sigma_h^2}$. By Taylor's expansion around $z$, that $\sigma \geq \tilde{\sigma}$, and the assumption that $z \geq 0$, we get 
\begin{align} 
\varepsilon_2 &\leq     \phi(z) z \Bigg(\frac{\sigma  }{\tilde{\sigma}}  - 1 \Bigg) \notag\\
&\leq  C  \Bigg(\frac{\sigma  - \tilde{\sigma} }{\tilde{\sigma}}   \Bigg) \text{ by $\sup_{z \geq 0} z \phi(z) \leq C$}\notag \\
&\leq  C  \Bigg(\frac{n {n \choose m}^{-1} \sigma_h^2 }{\tilde{\sigma}^2} \Bigg) \text{ by multiplying  $\frac{\sigma + \tilde{\sigma}}{\tilde{\sigma}}$ to the previous expression}\notag \\
&\leq C \Bigg(\frac{n {n \choose m}^{-1} \sigma_h^2 }{ \alpha (1 - p) \sigma_h^2 } \Bigg) =  \frac{Cp}{1 - p}
\label{varep2_ultimate_bdd}
\end{align}

Finally, collecting  \eqref{varep1_ultimate_bdd} and \eqref{varep2_ultimate_bdd}, we get 
\begin{equation*} \label{varepsilon_bdd}
\mathfrak{R} = \varepsilon_1  + \varepsilon_2  \leq  
 C(m) \Bigg\{   \sum_{r=2}^m\frac{ \sqrt{N}(\log (2n^r +1))^{3r} }{\sqrt{ (1 - p_{n, N})n^{r} }} \Bigg\} + C \Bigg(  \frac{p_{n, N}}{1- p_{n, N}} + \frac{1}{\sqrt{n}}\Bigg),
\end{equation*}
as claimed in \thmref{main}$(i)$.

\subsection{Proof for the overall bound in 
\eqref{sing_agn_BE_bdd} } \label{sec:overall_main_bdd}
It is a simple consequence of combining parts $(i)$ and $(ii)$ because
% by letting $N \leq  2n$, in light of $m \geq 2$ and $n > 2m +1$, we get 
\begin{align*}
p_{n, N} = N {n \choose m}^{-1} &\leq  \frac{ m!}{(n-1) \cdots (n - m+1)}\cdot m \quad \text{ (by $N \leq mn$)}\\
&\leq  \frac{m(m-1)\cdots 2}{(2m+1) \dots (m+3)} \cdot m \quad \text{ (by $2m+1 < n$)} \\
&< 1.
\end{align*}
%and
%\[
% \frac{p_{n, N}}{1- p_{n, N}} \leq n { n \choose m}^{-1} = \frac{m!}{(n-1) \cdots (n - m+1)} \leq \frac{m}{n-1}.
%\]

\end{proof}

\section{Bernstein's inequalities for the Hoeffding projections} \label{sec:Bernstein_pf}
In this section we will establish a Bernstein inequality for  the Hoeffding projections $U_n^{(r)}$ from \eqref{H_proj_u_stat} that implies \lemref{Bernstein_ineq_h_proj} as a corollary. Before that, however, we shall present a slightly more general result  that concerns the moments of generalized decoupled U-statistics that are canonical. 
Throughout this section, calligraphic constants such as $\cC_r$, $\cK_r \dots$ denote special constants whose value \emph{do not} vary from places to places (as opposed to the generic constants in normal font such as $C(r)$ defined in \secref{outline_notation}), where their subscripts   specify exclusive dependence on another quantity such as $r$.
For a  measurable space $(M, \cM)$ and $r \in \bN$, 
let $\{Y_i^{(l)}\}_{1 \leq i \leq n, 1 \leq l \leq r}$ be independent  $M$-valued random variables, and $\kappa: M^r \longrightarrow \bR$ be a real-valued measurable function not necessarily symmetric in its $r$ arguments. 
 Moreover,   
let 
\begin{equation} \label{kappa_kernels_def}
\kappa_{\bf i} = \kappa(Y_{i_1}^{(1)}, \dots, Y_{i_r}^{(r)}) \text{ for each } {\bf i} \equiv (i_1, \dots, i_r) \in [n]^r,
\end{equation}
% i.e. $\kappa_{\bf i}$ is a decoupled generalized U-statistic kernel
% \citep[Chapter 3]{de2012decoupling}, 
 which is assumed to have the \emph{canonical property} that
\begin{equation} \label{kappa_canonical_prop}
\bE_l[\kappa_{\bf i}] = 0 \text{ for each } l \in [r],
\end{equation}
where  $\bE_l[\cdot]$ denotes integration with respect to the variables $\{Y_i^{(l)}\}_{1 \leq i \leq n}$. For a non-empty $I \subset [r]$, we similarly let $\bE_I[\cdot]$ denote integration with respect to the variables $\{Y_i^{(l)}\}_{1 \leq i \leq n, l \in I}$; moreover, we let $|I|$ be the cardinality of $I$, $I^c = [r] \backslash I$ be the complement of $I$ in $[r]$, ${\bf i}_I = (i_l)_{l \in I}$ for any ${\bf i} \in [n]^r$,  and  $\cP_I$ be the collection of all partitions $\cJ = \{J_1, \dots, J_k\}$ of $I$ into non-empty  disjoint subsets, where we also define $|\cJ|  = k$ for a $\cJ$ as above. By convention, we also let $\cP_{\emptyset} = \{\emptyset\}$ with $|\emptyset| = 0$.

\begin{definition}[$\|\cdot\|_{\cJ}$-norm for a partition $\cJ \in \cP_I$ ] \label{def:opnorm}
For $r \in \bN$ and a non-empty set $I \subset [r]$, consider $\cJ = \{J_1, \dots, J_k\} \in \cP_I$. For the array  $(\kappa_{\bf i})_{{\bf i} \in [n]^r}$ defined in \eqref{kappa_kernels_def} with the canonical property \eqref{kappa_canonical_prop} and any fixed value of ${\bf i}_{I^c}$, define
\begin{multline*}
\| (\kappa_{\bf i})_{{\bf i}_I }\|_{\cJ} = \sup \Bigg\{
\Bigg| \bE_I\Bigg[  \sum_{{\bf i}_I}
\kappa  (Y_{i_1}^{(1)}, \dots, Y_{i_r}^{(r)})
\prod_{j=1}^{|\cJ|}  f_{{\bf i}_{J_j}}^{(j)}  \Big( (Y^{(l)}_{i_l})_{l \in J_j}\Big)
\Bigg]
\Bigg|
: \\
\bE 
\Bigg[
\sum_{{\bf i}_{J_j}} \Big|  f_{{\bf i}_{J_j}}^{(j)}  \Big( (Y^{(l)}_{i_l})_{l \in J_j}\Big)
 \Big|^2
\Bigg] \leq 1 \text{ for } j = 1, \dots, |\cJ|
\Bigg\},
\end{multline*}
where $\sum_{{\bf i}_{J_j}}(\cdot)$ means summing over all ${\bf i}_{J_j} \in [n]^{|J_j|}$, i.e. it is a sum of $n^{|J_j|}$ many terms; $\sum_{{\bf i}_I}(\cdot)$ is a summation similarly defined.
Moreover, by convention, we let $\|(\kappa_{\bf i})_{{\bf i}_\emptyset}\|_\emptyset = |\kappa_{\bf i}|$.
\end{definition}
\defref{opnorm} comes from \citet[Definition 3, p.9]{adamczak2006moment}, and note that,  $\| (\kappa_{\bf i})_{{\bf i}_I }\|_{\cJ}$ is a function in the variables 
$\{Y_i^{(l)}\}_{i \in [n], l \in I^c}$, so it is random unless $I = [r]$. 
A useful result regarding the \emph{canonical generalized decoupled U-statistic}
\[
\sum_{{\bf i} \in [n]^r} \kappa_{\bf i}
\] is the following sharp moment inequality, which is proved in  \appref{pf_moment_ineq_decouple_u_stat} by further developing the moment inequalities for  such statistics in the fundamental works of  \citet{gine2000exponential, adamczak2006moment} with an inductive argument.

\begin{theorem}[Sharp moment inequality for canonical generalized decoupled U-statistics] \label{thm:sharp_ustat_moment}
For the array  $(\kappa_{\bf i})_{{\bf i} \in [n]^r}$ defined in \eqref{kappa_kernels_def} with the canonical property \eqref{kappa_canonical_prop}, there exists an absolute constant $\cK_r$ depending only on $r$ such that, for any $p \geq 2$,
\begin{equation*}
\bE\Bigg[\Big|\sum_{{\bf i} \in [n]^r} \kappa_{\bf i}\Big|^p \Bigg] 
%&\leq \cK_r^p \sum_{I \subset [r]} \sum_{\cJ \in \cP_I} p^{p(|I^c|+ |\cJ|/2)}
%\bE_{I^c}\Big[ \max_{{\bf i}_{I^c}} \| (\kappa_{\bf i})_{{\bf i}_I }\|^p_{\cJ} \Big]\\
\leq \cK_r^p \sum_{I \subset [r]} \sum_{\cJ \in \cP_I} p^{p(|I^c|+ |\cJ|/2)}
\bE_{I^c}\Bigg[ \max_{{\bf i}_{I^c}}  \Big(\bE_I \big[\sum_{{\bf i}_I } \kappa_{\bf i}^2\big]\Big)^{p/2}\Bigg].
\end{equation*}

\end{theorem}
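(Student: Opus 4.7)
I would argue by strong induction on the degree $r$ of the decoupled kernel, at each stage peeling off one coordinate via a Rosenthal-type inequality for independent centered sums. This general strategy follows the moment-inequality techniques of \citet{gine2000exponential, adamczak2006moment}, but is refined here to land on the specific ``max $+$ $L^2$'' decomposition on the RHS of \thmref{sharp_ustat_moment}, which is what makes the bound operational for the tail estimates in \lemref{Bernstein_ineq_h_proj} and, ultimately, for the nonuniform B-E argument in \secref{pf_main_part_one}. For the base case $r=1$, the quantity $\sum_i \kappa(Y_i)$ is a sum of independent centered random variables, so the classical Rosenthal--Hitczenko--Montgomery-Smith--Latała inequality directly produces the two summands on the RHS of \thmref{sharp_ustat_moment} corresponding to $(I,\cJ)=(\{1\},\{\{1\}\})$ and $(\emptyset,\emptyset)$, namely a bound of the form $\cK_1^p\{p^{p/2}(\sum_i\bE\kappa(Y_i)^2)^{p/2} + p^p\,\bE\max_i|\kappa(Y_i)|^p\}$.

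For the inductive step, assuming the theorem in degree $r-1$, I would condition on $\{Y^{(l)}\}_{l\geq 2}$ and write $\sum_\bi \kappa_\bi = \sum_{i_1} T_{i_1}$, where $T_{i_1} = \sum_{i_2,\dots,i_r} \kappa(Y^{(1)}_{i_1},Y^{(2)}_{i_2},\dots,Y^{(r)}_{i_r})$; canonicity in the first coordinate makes the $T_{i_1}$'s conditionally independent and mean-zero under the randomness of $Y^{(1)}$, so the base-case Rosenthal applied conditionally (and then pulled outside by the tower property) yields
\[
\bE\Big|\sum_\bi \kappa_\bi\Big|^p \;\leq\; \cK^p\Big\{p^{p/2}\,\bE\Big[\Big(\sum_{i_1}\bE_1 T_{i_1}^2\Big)^{p/2}\Big] \;+\; p^p\,\bE\max_{i_1}\bE_1|T_{i_1}|^p\Big\}.
\]
For the max term, each $\bE_1|T_{i_1}|^p$ is a $p$-th moment of a canonical decoupled U-statistic of degree $r-1$ in $\{Y^{(l)}\}_{l\geq 2}$ with $Y^{(1)}_{i_1}$ treated as fixed, so the induction hypothesis applies pointwise in $Y^{(1)}_{i_1}$; passing the $\max_{i_1}$ and outer expectation through reindexes its contribution into $(I,\cJ)$-summands with $1 \in I^c$, consistent with the $p^p = p^{p\cdot 1}$ increment to $p^{p|I^c|}$. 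For the $L^2$ term, expanding $T_{i_1}^2$ and integrating over $Y^{(1)}_{i_1}$ re-expresses $\sum_{i_1}\bE_1 T_{i_1}^2$ as a non-negative combination of canonical decoupled statistics of degree $\leq r-1$ in the paired variables from $\{Y^{(l)}\}_{l\geq 2}$, once the paired kernel $\bE_1[\kappa_\bi \kappa_{\bi'}]$ has been Hoeffding-decomposed; applying the induction hypothesis with exponent $p/2$ to each canonical piece yields an additional $p^{p/2}$ factor that corresponds precisely to assigning $1$ to $I$ as a new singleton block of $\cJ$.

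Across the $r$ induction rounds, each coordinate $l \in [r]$ is ultimately placed either into $I$ (with a $p^{p/2}$ increment per round) or into $I^c$ (with a $p^p$ increment), and the accumulated exponents together with the inner squared-kernel expressions assemble into the final weight $p^{p(|I^c|+|\cJ|/2)}$ and the quantity $\bE_{I^c}[\max_{\bi_{I^c}}(\bE_I[\sum_{\bi_I}\kappa_\bi^2])^{p/2}]$ on the RHS. The main obstacle is the $L^2$ branch: after squaring $T_{i_1}$, the paired kernel $\bE_1[\kappa_\bi \kappa_{\bi'}]$ is generally not canonical in its $r-1$ paired arguments, and diagonal contributions where some $i_l=i'_l$ for $l\geq 2$ produce pieces of lower effective degree that must be re-absorbed without loss. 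Systematically Hoeffding-decomposing this paired kernel and certifying that every resulting canonical component matches a pair $(I,\cJ) \in \{I \subset [r]\} \times \cP_I$ in the RHS with the correct power of $p$ is where the operator-norm machinery of \defref{opnorm} and the combinatorial partition indexing from \citet{adamczak2006moment} are brought in most critically; the overall constant $\cK_r$ is then extracted by tracking the compounding of base-case Rosenthal constants through the $r$ inductive steps.
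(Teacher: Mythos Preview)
Your inductive Rosenthal-peeling strategy is essentially the route by which \citet{adamczak2006moment} proves his Theorem~6, and could in principle be carried to completion, but it is \emph{not} what the paper does. The paper's proof is a two-liner: it quotes Adamczak's Theorem~6 verbatim to obtain
\[
\bE\Big|\sum_{\bf i}\kappa_{\bf i}\Big|^p \leq \cK_r^p \sum_{I\subset[r]}\sum_{\cJ\in\cP_I} p^{p(|I^c|+|\cJ|/2)}\,\bE_{I^c}\Big[\max_{{\bf i}_{I^c}}\|(\kappa_{\bf i})_{{\bf i}_I}\|_\cJ^p\Big],
\]
and then shows, by an iterated Cauchy--Schwarz over the blocks $J_1,\dots,J_k$ of $\cJ$, that each operator norm $\|(\kappa_{\bf i})_{{\bf i}_I}\|_\cJ$ is dominated by the $L^2$ quantity $(\bE_I[\sum_{{\bf i}_I}\kappa_{\bf i}^2])^{1/2}$. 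This second step is elementary: fixing test functions $f^{(j)}$ with $\bE\sum|f^{(j)}|^2\leq 1$, one writes the defining inner product in \defref{opnorm} as nested expectations $\bE_{J_1}[\cdots\bE_{J_k}[\cdots]]$ and applies Cauchy--Schwarz once per block, inductively collapsing to the claimed bound.

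So the paper's novelty here is not a new induction on $r$ but the observation that Adamczak's operator norms can be uniformly replaced by the much simpler $L^2$ quantity, which is what makes the bound tractable downstream for the sub-Weibull estimates in \clmref{claim1}. Your plan would work but amounts to reproving Adamczak's theorem from scratch; the ``$L^2$ branch'' obstacle you flag---Hoeffding-decomposing the paired kernel $\bE_1[\kappa_{\bf i}\kappa_{{\bf i}'}]$---is precisely the hard part of Adamczak's own proof, and there is no need to redo it. You would be better served by citing Adamczak directly and focusing your effort on the Cauchy--Schwarz reduction, which is short and is the actual content of the paper's argument.
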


We are now ready to prove a Bernstein's inequality for $U_n^{(r)}$ that implies \lemref{Bernstein_ineq_h_proj}. To facilitate the application of  \thmref{sharp_ustat_moment} to that end, we let $\{\bX_1^{(k)}, \dots, \bX_n^{(k)}\}_{k =1}^r$ be $r$ independent copies of our data in \eqref{iid_data_sample}, and for each ${\bf i} \equiv (i_1, \dots, i_r) \in [n]^r$,  we  define  the  random ``decoupled" quantity
\begin{equation} \label{frakh_def}
h_{\bf i}^{(r)}  \equiv 
  \begin{cases} 
  \frac{\pi_r(h)(\bX_{i_1}^{(1)}, \dots, \bX_{i_r}^{(r)})}{r!}  & \text{if } i_1 \neq \dots \neq i_r; \\
   0     & \text{if otherwise},
  \end{cases}
\end{equation}
with the projection kernel  $\pi_r(h)$.
%so one can write
%\begin{equation}\label{rewriting_Unr}
%U^{(r)}_n =  {n \choose r}^{-1} \sum_{{\bf i}  \in [n]^r}  \frakh_{\bf i}^{(r)}.
%\end{equation}
Moreover, by letting $\bE_{l, H}[\cdot]$ denote the expectation with respect to $\{\bX^{(l)}_i\}_{1 \leq i \leq n}$ for $l \in [r]$ and under $H$, each $h_{\bf i}^{(r)}$ in \eqref{frakh_def} inherits the canonical property
\begin{equation} \label{canonical_prop_of_frakh}
\bE_{l, H} [h^{(r)}_{\bf i}] = 0
\end{equation}
from $\pi_r(h)$; recall \eqref{canonical_property_hoeffding_proj}. We now present our  Bernstein's inequality for $U_n^{(r)}$; it is stated in terms of the sum 
\begin{equation} \label{alt_form_for_nchooser_Unr}
{n \choose r}  U_n^{(r)} = \sum_{(i_1, \dots, i_r) \in I_{n, r}}  \pi_r (h)({\bf X}_{i_1}, \dots, {\bf X}_{i_r}),
\end{equation}
 which is very close in form to the ``decoupled" sum $\sum_{{\bf i} \in [n]^r} h^{(r)}_{\bf i}$, except the latter has the $r$ independent  copies of the original data in \eqref{iid_data_sample} as input.

\begin{theorem}[Bernstein inequality for $U_n^{(r)}$] \label{thm:Bernstein_ineq}
Suppose $2m < n$. For $r \in [m]$, consider $U_n^{(r)}$ from \eqref{H_proj_u_stat} with our kernel $h$ defined in \eqref{polynomial_kernel}. Then, for all $x \geq 0$, there are constants $C(r), \cC_r, \cK_r > 0$ such that 
\begin{multline}  \label{most_general_bernstein_ineq}
P_H\Bigg(\Big|  {n \choose r}  U_n^{(r)}\Big| > x\Bigg) \leq  \\
C(r) \exp\Bigg(- \min_{\substack{I \subset [r]\\ \cJ \in \cP_I}} \Bigg( \frac{x}{\cC_r \cdot \cK_r \cdot e \cdot (\log ( 2 n^{|I^c|} + 1) )^{|I^c|} 
n^{|I|/2} \sigma_h  }  \Bigg)^{\frac{1}{2|I^c|+ |\cJ|/2}} 
\Bigg);
\end{multline}
note that $\cK_r$ is  the constant from \thmref{sharp_ustat_moment}.
%or equivalently,
%  \[
%   P\Bigg(\frac{\sqrt{n}}{{ n \choose m}}\Big|\sum_{{\bf i} \in [n]^m} \frakh_{\bf i}\Big| > z\Bigg) \leq      C_1(m) \exp\Bigg(-  \frac{c(m) z^2 n^{m-1}}{ \|\frakh\|_2^2 }   \Bigg) \text{ for  } 0 \leq  z \leq C_2(m)   \|\frakh\|_2  \cdot n^{(1-m)/2}.
%  \]

%or equivalently, 
%  \begin{equation*} 
%P\Big(\frac{\sqrt{n}}{{n \choose m}}\Big|\sum_{{\bf i} \in [n]^m} \frakh_{\bf i}\Big| > \frac{\sqrt{n}x'}{{n \choose m}}\Big) \leq  C(m) \exp\Bigg(- \Bigg( \frac{\log(2n^m +1 ) }{\log(3)} \Bigg)^{\frac{1}{2.5 m}} 
%\Bigg),
%\end{equation*}

\end{theorem}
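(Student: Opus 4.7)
The plan is to combine three ingredients: a decoupling inequality for canonical U-statistics, the sharp moment bound of \thmref{sharp_ustat_moment}, and the hypercontractivity principle of \lemref{hypercontraction}. First, I would invoke the classical decoupling inequality for completely degenerate U-statistics (see, e.g., \citet{de2012decoupling}, Theorem 3.1.1) to reduce moments of the coupled sum ${n \choose r}U_n^{(r)}$ to moments of its decoupled counterpart $\sum_{{\bf i} \in [n]^r} h^{(r)}_{{\bf i}}$, at the cost of an $r$-dependent multiplicative constant. The decoupled array $\kappa_{\bf i} \equiv h^{(r)}_{\bf i}$ is canonical in the sense of \eqref{kappa_canonical_prop} by virtue of \eqref{canonical_prop_of_frakh}, so \thmref{sharp_ustat_moment} applies and yields, for every $p \ge 2$,
\begin{equation*}
\bE_H\Bigl[\Bigl|\sum_{{\bf i}} h^{(r)}_{{\bf i}}\Bigr|^p\Bigr] \le \cK_r^p \sum_{I \subset [r]} \sum_{\cJ \in \cP_I} p^{p(|I^c|+|\cJ|/2)}\, M_{I,\cJ,p},
\end{equation*}
with $M_{I,\cJ,p} \equiv \bE_{I^c}\bigl[\max_{{\bf i}_{I^c}} V^{p/2}_{{\bf i}_{I^c}}\bigr]$ and $V_{{\bf i}_{I^c}} \equiv \bE_I\bigl[\sum_{{\bf i}_I}(h^{(r)}_{{\bf i}})^2\bigr]$.

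The main work is to bound $M_{I,\cJ,p}$ sharply. The key structural observation is that because our kernel $h$ is a polynomial of degree $2m$ in jointly Gaussian coordinates (cf.\ \eqref{breve_H}--\eqref{polynomial_kernel}), $\pi_r(h)^2$ has degree at most $4m$, and partial integration $\bE_I$ preserves polynomiality in the leftover coordinates. Hence, for each fixed ${\bf i}_{I^c}$, $V_{{\bf i}_{I^c}}$ is a non-negative polynomial of degree $\le 4m$ in the Gaussian variables $\{\bX^{(l)}_{i_l}: l \in I^c\}$, whose mean satisfies
\begin{equation*}
\bE[V_{{\bf i}_{I^c}}] \;=\; \sum_{{\bf i}_I}\bE[(h^{(r)}_{{\bf i}})^2] \;\le\; \frac{n^{|I|}\sigma_h^2}{(r!)^2},
\end{equation*}
where we used the non-expansiveness of Hoeffding projections, $\|\pi_r(h)\|_2 \le \|h\|_2 = \sigma_h$. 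Hypercontractivity then upgrades this $L^1$-bound to the sub-Weibull-type estimate $\|V_{{\bf i}_{I^c}}\|_q \le C(m)\, q^{2m}\,\bE[V_{{\bf i}_{I^c}}]$ for all $q \ge 2$. Passing to the maximum over the $n^{|I^c|}$ choices of ${\bf i}_{I^c}$ via an $L^q$ maximal inequality tailored to sub-Weibull variables then introduces the logarithmic factor $(\log(2n^{|I^c|}+1))^{|I^c|}$, a price paid proportional to the number of ``free'' coordinate directions in $I^c$.

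Assembling everything yields a moment estimate of the form
\begin{equation*}
\bE_H\Bigl[\Bigl|\sum_{{\bf i}} h^{(r)}_{{\bf i}}\Bigr|^p\Bigr]^{1/p} \;\le\; \sum_{I,\cJ} \cC_r\, \cK_r\, (\log(2n^{|I^c|}+1))^{|I^c|}\, n^{|I|/2}\, \sigma_h \cdot p^{2|I^c|+|\cJ|/2},
\end{equation*}
whereupon Markov's inequality, optimized separately over $p$ for each $(I,\cJ)$ and followed by a union bound over the finitely many pairs (whose count depends only on $r$), converts this into the claimed tail bound \eqref{most_general_bernstein_ineq}; the optimal choice for the $(I,\cJ)$ term is $p \asymp (x/(\cC_r\cK_r e\, (\log(2n^{|I^c|}+1))^{|I^c|}\, n^{|I|/2}\sigma_h))^{1/(2|I^c|+|\cJ|/2)}$, which explains the exponent $1/(2|I^c|+|\cJ|/2)$ in the statement. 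The main obstacle I foresee lies in the sharp handling of the max-over-${\bf i}_{I^c}$ step: the naive $\|\max_k Y_k\|_p \le N^{1/p}\max_k\|Y_k\|_p$ would spoil the bound with a spurious $n^{|I^c|}$ factor, so one must carefully exploit the hypercontractive sub-Weibull decay of each $V_{{\bf i}_{I^c}}$ to extract only the claimed $(\log n)^{|I^c|}$-type loss, and the extra polynomial-in-$p$ price paid there must add cleanly with the $p^{p(|I^c|+|\cJ|/2)}$ factor from \thmref{sharp_ustat_moment} to produce the total exponent $2|I^c|+|\cJ|/2$ in the final tail bound.
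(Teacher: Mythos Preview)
Your outline matches the paper's proof closely: decoupling, \thmref{sharp_ustat_moment}, a sub-Weibull/hypercontractive bound on $\max_{{\bf i}_{I^c}} V_{{\bf i}_{I^c}}$, and Markov's inequality with an optimized $p$. The one concrete gap is the polynomial degree you assign to $V_{{\bf i}_{I^c}}$. You write that $\pi_r(h)^2$ has degree at most $4m$ and deduce $\|V_{{\bf i}_{I^c}}\|_q \le C(m)\, q^{2m}\,\bE[V_{{\bf i}_{I^c}}]$; this is too loose and, if carried through, would yield a total $p$-exponent of $m+|I^c|+|\cJ|/2$ and a log-factor $(\log n)^{m}$ rather than the required $2|I^c|+|\cJ|/2$ and $(\log n)^{|I^c|}$. (Your assembled moment estimate already writes the correct exponent $2|I^c|+|\cJ|/2$, so the proposal is internally inconsistent at this point.)

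The resolution---which is exactly the ``main obstacle'' you flag---is that $\pi_r(h)$ has degree at most $2$ in \emph{each} of its $r$ arguments (inspect \eqref{breve_H}--\eqref{polynomial_kernel} and the definition \eqref{projection_fns}), so after squaring and integrating over the variables indexed by $I$, the polynomial $V_{{\bf i}_{I^c}}$ has degree at most $4|I^c|$ in the remaining Gaussians and is therefore sub-Weibull of order $1/(2|I^c|)$. Hypercontractivity then gives the sharper $\|V_{{\bf i}_{I^c}}\|_q \le C(r)\, q^{2|I^c|}\,\bE[V_{{\bf i}_{I^c}}]$, and the sub-Weibull maximal inequality (\lemref{max_ineq}) introduces $(\log(2n^{|I^c|}+1))^{2|I^c|}$ on the $L^{p/2}$-norm of the max. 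With this correction the $p$-exponents combine as $|I^c|+|\cJ|/2$ (from \thmref{sharp_ustat_moment}) plus $|I^c|$ (from $p^{2|I^c|}$ on the $p/2$-norm, i.e.\ $p^{p|I^c|}$ on the moment) to give exactly $2|I^c|+|\cJ|/2$, and the log- and $n$-powers match \eqref{most_general_bernstein_ineq}. This degree count is the content of the paper's \lemref{subWeibull_our_kernel}, and the full max bound is packaged as \clmref{claim1}; once you sharpen $4m$ to $4|I^c|$, your argument coincides with the paper's.
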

We will first prove  \lemref{Bernstein_ineq_h_proj} as  a corollary, then  \thmref{Bernstein_ineq}:

\begin{proof} [Proof of \lemref{Bernstein_ineq_h_proj} as a corollary of \thmref{Bernstein_ineq}]

It suffices to show that, for $r \in [m]$,  if $x' = \cC_r \cdot \cK_r \cdot e \cdot (\log (2 n^r +1))^r n^{r/2} \sigma_h \cdot  (\log (n+ 2))^{2r}$, then there exists constants $C_1(r), C_2(r), c_1(r), c_2(r) >0$ such that
  \begin{equation}  \label{non_sub_Gauss_tail_general}
P_H\Bigg(\Bigg|  {n \choose r}  U_n^{(r)}\Bigg| > x'\Bigg) \leq    \frac{C_1(r)}{n +2}
\end{equation}
and
 \begin{equation} \label{sub_Gauss_tail_general}
 P_H\Bigg(\Bigg|  {n \choose r}  U_n^{(r)}\Bigg|  > x\Bigg) \leq   
    C_2(r) \exp\Bigg(-   \frac{c_1(r)x^2}{ n^r \sigma_h^2 } \Bigg) 
 \text{ for  } 0 \leq x \leq c_2(r)   \sigma_h  \cdot n^{r/2}.
 \end{equation}
Note that \eqref{non_sub_Gauss_tail_general} and \eqref{sub_Gauss_tail_general} readily implies \lemref{Bernstein_ineq_h_proj}$(i)$ and $(ii)$ respectively, as the constants depending on $r$ can be changed to the ``big" and ``small" constants depending on $m$ in \lemref{Bernstein_ineq_h_proj}, given that $r \in [m]$.

Define
\begin{equation} \label{MIJ_def_2}
M_I \equiv \bigg(\log ( 2 n^{|I^c|} + 1) \bigg)^{|I^c|} 
n^{|I|/2}  \sigma_h \text{ for any } I \subset [r].
\end{equation}
In particular, since $\frac{x'}{\cC_r \cdot \cK_r \cdot e \cdot M_I} \geq 1$ for all $I$ (because $\log (n+2) > 1$ even for $n=1$),  it must be that
\begin{equation} \label{larger_statement_in_x_prime}
\Bigg( \frac{x'}{\cC_r \cdot \cK_r \cdot e \cdot M_I }  \Bigg)^{\frac{1}{2|I^c|+ |\cJ|/2}}   \geq \Bigg( \frac{x'}{\cC_r \cdot \cK_r \cdot e \cdot M_I }  \Bigg)^{\frac{1}{2r}}.
\end{equation}
Hence, \thmref{Bernstein_ineq} applied with the threshold $x'$ together with \eqref{larger_statement_in_x_prime} implies
\[
 P_H\Bigg(\Bigg|  {n \choose r}  U_n^{(r)}\Bigg| > x'\Bigg) \leq \exp (- \log (n+2)),
\]
which is \eqref{non_sub_Gauss_tail_general}.
Moreover, if  
%\[
%\Bigg( \frac{x}{\cK_m \cdot e \cdot M_{[m], \{[m]\}} }  \Bigg)^2
% \leq \Bigg( \frac{x}{\cK_m \cdot e \cdot M_{I, \cJ} }  \Bigg)^{\frac{1}{2|I^c|+ |\cJ|/2}}  \text{ for all } (I, \cJ) \neq ([m], \{[m]\}),
%\]
%then 
\begin{equation} \label{sub_Gaussian_condition}
\bigg([r], \{[r]\}\bigg) = \argmin_{I \subset [r], \cJ \in \cP_I} \Bigg( \frac{x}{\cC_r \cdot \cK_r \cdot e \cdot M_I }  \Bigg)^{\frac{1}{2|I^c|+ |\cJ|/2}},
\end{equation}
the right hand side of the bound in  \thmref{Bernstein_ineq}  will take the sub-Gaussian form
  \[
%  C(m) \exp\Bigg(-  \Bigg( \frac{x}{\cK_m \cdot e \cdot M_{[m], \{[m]\}} }  \Bigg)^2 \Bigg) = 
    C(r) \exp\Bigg(-  \Bigg( \frac{x}{\cC_r \cdot \cK_r \cdot e \cdot n^{r/2} \sigma_h }  \Bigg)^2 \Bigg)
  \]
  since $M_{[r]} = n^{r/2}\sigma_h$, $|[r]^c| = 0$ and $|\{[r]\}| = 1$. Hence, we can establish  \eqref{sub_Gauss_tail_general} by  the following claim  proven in \appref{remaining_pf_Bernstein_ineq_h_proj} about a sufficient condition for \eqref{sub_Gaussian_condition} to hold. 
  
  \begin{claim} \label{clm:claim2}
  There exists an absolute constant $c(r) > 0$ such that, if
  \[
x \leq c(r)  \sigma_h  \cdot n^{r/2},
  \]
  then the condition in \eqref{sub_Gaussian_condition} is true.  
  \end{claim}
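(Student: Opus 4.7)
The plan is to show that the function $(I, \cJ) \mapsto \big(x/(\cC_r \cK_r e M_I)\big)^{1/\alpha_{I, \cJ}}$, with $\alpha_{I, \cJ} := 2|I^c| + |\cJ|/2$, attains its minimum at $([r], \{[r]\})$ whenever $x$ is below a suitable threshold depending only on $r$. Note that $\alpha_{[r], \{[r]\}} = 1/2$ whereas $\alpha_{I, \cJ} \geq 1$ for every other pair (with the value $1$ attained at $I = [r]$, $|\cJ| = 2$). In particular $2\alpha_{I, \cJ} - 1 \geq 1 > 0$ for every competitor, so the desired inequality
\[
\Bigg(\frac{x}{\cC_r \cK_r e M_{[r]}}\Bigg)^{\!2} \;\leq\; \Bigg(\frac{x}{\cC_r \cK_r e M_I}\Bigg)^{\!1/\alpha_{I, \cJ}}
\]
can be rearranged (by raising to the $\alpha_{I, \cJ}$-th power and isolating $x$) to the equivalent scalar condition
\[
x \;\leq\; \cC_r \cK_r e \cdot \sigma_h n^{r/2} \cdot g_I(n)^{1/(2\alpha_{I, \cJ} - 1)},
\]
where $g_I(n) := M_{[r]}/M_I = n^{|I^c|/2}/\bigl(\log(2n^{|I^c|}+1)\bigr)^{|I^c|}$ follows from the definition of $M_I$ in \eqref{MIJ_def_2} together with $M_{[r]} = n^{r/2}\sigma_h$.

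It therefore suffices to produce $c(r) > 0$ satisfying $c(r) \leq \cC_r \cK_r e \cdot g_I(n)^{1/(2\alpha_{I, \cJ} - 1)}$ for every competitor $(I, \cJ)$ and every admissible $n > 2m$; since $r \leq m$, admissibility forces $n \geq 2r + 1$. When $|I^c| = 0$ (i.e.\ $I = [r]$ with $|\cJ| \geq 2$), one has $g_I \equiv 1$ and the requirement is simply $c(r) \leq \cC_r \cK_r e$. When $k := |I^c| \geq 1$, the function $n \mapsto g_I(n) = n^{k/2}/(\log(2n^k+1))^k$ is continuous and strictly positive on $[1, \infty)$ and tends to $\infty$ as $n \to \infty$, hence attains a strictly positive infimum $\gamma_{r, k}$ on the closed set $\{n \in \bN : n \geq 2r + 1\}$. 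Since the collection of pairs $(I, \cJ)$ is finite (its size depends only on $r$), the choice
\[
c(r) \;:=\; \cC_r \cK_r e \cdot \min\Bigg\{1,\;\; \min_{\substack{(I, \cJ) \neq ([r], \{[r]\}) \\ |I^c| \geq 1}} \gamma_{r, |I^c|}^{1/(2\alpha_{I, \cJ} - 1)}\Bigg\}
\]
is a minimum over finitely many strictly positive quantities and so is itself strictly positive; it satisfies all of the required inequalities simultaneously, proving the claim.

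The principal work lies in the careful bookkeeping of exponents to reduce the $\argmin$ statement to the family of scalar inequalities displayed above. The only remaining point of substance is the lower bound $\gamma_{r, k} > 0$, which is a routine compactness fact (a continuous, strictly positive function on a closed half-line that diverges at infinity). No deeper obstacle is anticipated; in particular the derivation sidesteps any case split between $x < \cC_r \cK_r e M_I$ and $x \geq \cC_r \cK_r e M_I$ because the rearrangement above is an unconditional equivalence for $x > 0$.
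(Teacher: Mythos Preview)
Your argument is correct. Both you and the paper begin identically, rearranging the $\argmin$ condition into the family of scalar thresholds
\[
x \;\leq\; \cC_r \cK_r e \cdot \sigma_h n^{r/2} \cdot \Bigl(\tfrac{M_{[r]}}{M_I}\Bigr)^{1/(2\alpha_{I,\cJ}-1)},
\]
one for each competitor $(I,\cJ)$. The difference lies in how the right-hand side is bounded below by a constant multiple of $\sigma_h n^{r/2}$. The paper carries out an explicit optimization: writing the threshold as $\cC_r\cK_r e\,\sigma_h\, n^{\mathtt f(|I^c|,|\cJ|)}/(\log(2n^{|I^c|}+1))^{\mathtt g(|I^c|,|\cJ|)}$, it minimizes $\mathtt f$ and maximizes $\mathtt g$ over competitors via partial derivatives, obtaining the explicit worst-case exponent $\mathtt f_{\min}=(r+1)^2/(2(r+2))=r/2+1/(2(r+2))$, and then appeals (implicitly) to the fact that $n^{1/(2(r+2))}$ beats any fixed power of $\log n$. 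Your route sidesteps all of this bookkeeping: you simply observe that $g_I(n)=n^{|I^c|/2}/(\log(2n^{|I^c|}+1))^{|I^c|}$ is continuous, strictly positive, and divergent on $[2r+1,\infty)$, hence uniformly bounded below, and then minimize over the finite set of competitors. Your argument is shorter and more transparent; the paper's buys an explicit quantitative threshold, which is not actually needed for the claim as stated.
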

\end{proof}

\begin{proof} [Proof of \thmref{Bernstein_ineq}]For any $I \subset [r]$,  we will use $\bE_{I, H}[\cdot]$ to denote expectation taken with respect to  $\{\bX^{(l)}_i\}_{1 \leq i \leq n, l \in I}$  under $H$. 
By \thmref{sharp_ustat_moment} and the canonical property in \eqref{canonical_prop_of_frakh}, for  the decoupled quantities defined in \eqref{frakh_def} and $p \geq 2$,
\begin{align} \label{moment_bdd_transparent}
\bEH\bigg[\Big|\sum_{{\bf i} \in [n]^r} h^{(r)}_{\bf i}\Big|^p \bigg] 
&\leq \cK_r^p \sum_{I \subset [r]} \sum_{\cJ \in \cP_I} p^{p(|I^c|+ |\cJ|/2)} \bE_{I^c, H}\bigg[\bigg(\max_{{\bf i}_{I^c}} \bE_{I, H} \bigg[\sum_{{\bf i}_I } (h^{(r)}_{\bf i})^2\bigg]  \bigg)^{p/2} \bigg].
\end{align}
 First, we make  the following claim whose proof can be found in \appref{remain_pf_bernstein_ineq}:
\begin{claim}\label{clm:claim1} There exists an absolute constant $C(r) >0$ depending only on $r$ such that,  for any $I \subset [r]$ and $p \geq 2$, 
\begin{equation*} 
\bigg\|\max_{{\bf i}_{I^c}} \bE_{I, H} \bigg[\sum_{{\bf i}_I } (h^{(r)}_{\bf i})^2\bigg]  \bigg\|_{p/2, H} \leq 
C(r) p^{2 |I^c|} \bigg(\log ( 2 n^{|I^c|} + 1) \bigg)^{2|I^c|}  n^{|I|} \sigma_h^2.
\end{equation*}
\end{claim}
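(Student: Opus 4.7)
The plan is to exploit that, for each fixed ${\bf i}_{I^c}$, the random variable $G_{{\bf i}_{I^c}} \equiv \bE_{I, H}\big[\sum_{{\bf i}_I}(h^{(r)}_{\bf i})^2\big]$ is a \emph{polynomial of degree at most $4|I^c|$ in the $|I^c|$ independent Gaussian vectors} $\{\bX^{(l)}_{i_l}\}_{l \in I^c}$. This structural observation is what will allow me to sharply control the moments of $G_{{\bf i}_{I^c}}$ via \lemref{hypercontraction} and then bound the maximum over $n^{|I^c|}$ such terms with only a logarithmic penalty. The degree count holds because the base kernel $h$ in \eqref{polynomial_kernel} is of degree $\leq 2$ in each of its $m$ arguments (each $\hat{\theta}_{uv}(\bx)$ being a quadratic form in $\bx$), so $\pi_r(h)$, and hence $h^{(r)}_{\bf i}$, is of degree $\leq 2$ in each of its $r$ arguments; $(h^{(r)}_{\bf i})^2$ is of degree $\leq 4$ per argument, and integrating out the $I$-indexed variables and summing over ${\bf i}_I$ preserves this degree in each of the remaining $|I^c|$ Gaussian vectors.

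The first step is an $L_{2,H}$ bound for a single $G_{{\bf i}_{I^c}}$. Starting from Cauchy--Schwarz in the form $(\sum_{{\bf i}_I}a_{{\bf i}_I})^2 \leq n^{|I|}\sum_{{\bf i}_I}a_{{\bf i}_I}^2$ together with Jensen $(\bE_{I,H}[Z])^2 \leq \bE_{I,H}[Z^2]$, I would obtain $\bE_H[G_{{\bf i}_{I^c}}^2] \leq n^{|I|}\sum_{{\bf i}_I}\bE_H[(h^{(r)}_{\bf i})^4]$. Since $h^{(r)}_{\bf i} = \pi_r(h)(\bX^{(1)}_{i_1}, \dots, \bX^{(r)}_{i_r})/r!$ is a Gaussian polynomial of degree $\leq 2r$, \lemref{hypercontraction} applied with $q_1 = 2$, $q_2 = 4$ gives $\bE_H[(h^{(r)}_{\bf i})^4] \leq C(r)\|h^{(r)}_{\bf i}\|_{2,H}^4$. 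Combining this with the standard Hoeffding variance identity $\sigma_h^2 = \sum_{s=1}^m \binom{m}{s}\|\pi_s(h)\|_{2,H}^2$ (an immediate consequence of the orthogonality of the summands in the Hoeffding decomposition \eqref{H_decomp}), which forces $\|\pi_r(h)\|_{2,H}^2 \leq \sigma_h^2$, delivers $\|G_{{\bf i}_{I^c}}\|_{2,H} \leq C(r) n^{|I|}\sigma_h^2$.

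Next I would apply \lemref{hypercontraction} a second time, this time to $G_{{\bf i}_{I^c}}$ itself, viewed as a Gaussian polynomial of degree $4|I^c|$, to obtain for any $q \geq 2$
\[
\|G_{{\bf i}_{I^c}}\|_{q,H} \leq C(r)\, q^{2|I^c|}\, \|G_{{\bf i}_{I^c}}\|_{2,H} \leq C(r)\, q^{2|I^c|}\, n^{|I|}\sigma_h^2.
\]
With $N \equiv n^{|I^c|}$, the elementary inequality $\|\max_j Y_j\|_q^q \leq \sum_j \|Y_j\|_q^q$ gives $\|\max_{{\bf i}_{I^c}}G_{{\bf i}_{I^c}}\|_q \leq N^{1/q}\max_{{\bf i}_{I^c}}\|G_{{\bf i}_{I^c}}\|_q$; choosing $q = \max(p/2, \log N)$ ensures $N^{1/q} \leq e$, and combining with the elementary inequality $\max(p/2, \log N) \leq (p/2)\log(2n^{|I^c|}+1)$ (which holds under $p \geq 2$ because $\log(2N+1) \geq 1$ and, when $p/2 < \log N$, $p \geq 2 \geq 2\log N/\log(2N+1)$) delivers the claimed bound. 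The corner case $I^c = \emptyset$ reduces to the deterministic bound from the $L_{2,H}$ step with $(\cdot)^0 = 1$.

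The main technical obstacle, in my view, lies in the $L_{2,H}$ step: securing the correct $\sigma_h^2$ scaling -- rather than a much cruder bound involving raw fourth moments of $h$ -- requires layering the hypercontractivity of the Gaussian polynomial $\pi_r(h)$ on top of the orthogonal variance decomposition inherent in the Hoeffding expansion \eqref{H_decomp}, and threading both through the Cauchy--Schwarz/Jensen maneuver. Once that sharp $L_{2,H}$ bound is in place, the second round of hypercontractivity and the max-of-sub-Weibull moment estimate are mechanical.
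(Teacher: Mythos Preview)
Your proof is correct and rests on the same two pillars as the paper's: (i) recognizing that $G_{{\bf i}_{I^c}}$ is a Gaussian polynomial of degree $4|I^c|$ so that hypercontractivity (\lemref{hypercontraction}) controls all its moments by its $L_2$ norm, and (ii) bounding that $L_2$ norm by $C(r)n^{|I|}\sigma_h^2$ via Jensen, a second application of hypercontractivity to $h^{(r)}_{\bf i}$, and the Hoeffding variance bound $\|\pi_r(h)\|_{2,H}^2 \leq \sigma_h^2$. The paper's \lemref{2nd_moment_bdd_hrr} reaches the same conclusion through the inclusion--exclusion formula for $\|\pi_r(h)\|_{2,H}^2$; your orthogonality argument is cleaner. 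Where you diverge is in handling the maximum over ${\bf i}_{I^c}$: the paper passes through the sub-Weibull/Orlicz framework (\lemsref{weibull_p_norm_relation}, \lemssref{max_ineq}, \lemssref{GaussPolyIsSubWeibull}) to translate between $\|\cdot\|_{p/2}$ and $\|\cdot\|_{\psi_{1/(2|I^c|)}}$ and then invokes the Orlicz maximal inequality, whereas you use the elementary $\|\max_j Y_j\|_q \leq N^{1/q}\max_j\|Y_j\|_q$ with the choice $q=\max(p/2,\log N)$ and monotonicity of $L_q$ norms. Your route is shorter and self-contained; the paper's route has the advantage of fitting into the sub-Weibull language used elsewhere in the appendix, but for this claim alone your argument is the more economical one.
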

%Using the shorthand $\pi_r(h)({\bf X}_{\bf i}) = \pi_r(h)({\bf X}_{i_1}, \dots, {\bf X}_{i_r})$ for 
%${\bf i} = (i_1, \dots, i_r) \in I_{n, r}$, 
In consideration of the form in \eqref{alt_form_for_nchooser_Unr}, 
by the standard \emph{decoupling inequality} for U-statistics \citep[Theorem 3.1.1, p.99]{de2012decoupling}, \eqref{moment_bdd_transparent} and \clmref{claim1}, there exists a special constant $\cC_r > 0$ depending only on $r$ such that \begin{align}
&\bEH\bigg[\Big|{n \choose r}  U_n^{(r)}  \Big|^p \bigg]  \notag \\
%&\leq (\cC_r \cK_r)^p \sum_{I \subset [r]} \sum_{\cJ \in \cP_I} p^{p(2|I^c|+ |\cJ|/2)} \bigg(\log ( 2 n^{|I^c|} + 1) \bigg)^{p|I^c|}
%n^{(|I| \cdot p)/2} \sigma_h^p \notag \\
&\leq (\cC_r \cK_r)^p \sum_{I \subset [r]} \sum_{\cJ \in \cP_I} p^{p(2|I^c|+ |\cJ|/2)}  M_I^p \text{ for any } p \geq 2, \label{k_moment_bdd_in_MIJ}
\end{align}
where we have defined
\begin{equation} \label{MIJ_def}
M_I \equiv \bigg(\log ( 2 n^{|I^c|} + 1) \bigg)^{|I^c|} 
n^{|I|/2}  \sigma_h \text{ for any } I \subset [r].
\end{equation}
Moreover, define
\[
(I_p, \cJ_p) = \argmax_{I \subset [r], \cJ \in \cP_I} p^{p(2|I^c|+ |\cJ|/2)}  M_I^p \text{ for }  p \geq 2.
\]
and 
\begin{equation} \label{px_def}
p_x = \min_{I \subset [r], \cJ \in \cP_I} \Bigg( \frac{x}{\cC_r \cdot \cK_r \cdot e \cdot M_I }  \Bigg)^{\frac{1}{2|I^c|+ |\cJ|/2}} \text{ for any } x \geq 0,
\end{equation}
where we remark that $p_x$ could be a number less than $2$.
By Markov's inequality and \eqref{k_moment_bdd_in_MIJ}, we get that
\begin{multline} \label{tail_ineq_for_px_geq_2}
\PH\Bigg(\Big|{n \choose r}  U_n^{(r)} \Big| > x\Bigg) \leq \frac{C(r)  \Big(\cC_r \cK_r p_x^{(2|I_{p_x}^c|+ |\cJ_{p_x}|/2)}  M_{I_{p_x}}\Big)^{p_x}}{x^{p_x}}
\leq C(r)  e^{- p_x} \\
 \text{ for any } x >0 \text{ such that } p_x \geq 2,
\end{multline}
where the first inequality comes from how $(\cI_{p_x}, \cJ_{p_x})$ is defined, and the second inequality uses the fact that 
\[
p_x \leq \bigg( \frac{x}{\cC_r \cdot \cK_r \cdot e \cdot M_{I_{p_x}} }  \bigg)^{\frac{1}{2|I_{p_x}^c|+ |\cJ_{p_x}|/2}}
\]
 by the definition in \eqref{px_def}. Since $\PH(|\sum_{{\bf i} \in I_{n, r}} \pi_r(h)({\bf X}_{\bf i})| > x)$ must be bounded by $e^2 e^{-p_x}$ if $p_x < 2$, one can further deduce from \eqref{tail_ineq_for_px_geq_2} that, for a constant $C(r) > 0$, 
\begin{equation*} \label{overall_tail_bdd}
\PH\Bigg(\Big|{n \choose r}  U_n^{(r)} \Big| > x\Bigg)  \leq  C(r)  e^{- p_x} 
%\\
% C(r) \exp\Bigg(- \min_{I \subset [r], \cJ \in \cP_I} \Bigg( \frac{x}{\cC_r \cdot \cK_r \cdot e \cdot M_I }  \Bigg)^{\frac{1}{2|I^c|+ |\cJ|/2}} 
%\Bigg)
 \text{ for all } x\geq 0,
\end{equation*}
which is exactly the bound stated by \thmref{Bernstein_ineq}.

\end{proof}

We shall make one remark before ending this section. In the proof of \thmref{Bernstein_ineq} above,  we have effectively exploited the polynomial structure of the projection kernel $\pi_r(h)$  to prove \clmref{claim1}  in \appref{remain_pf_bernstein_ineq}. 
Generally, developing such Bernstein inequalities for U-statistics with more general unbounded kernels is a problem of current interest; see \citet{chakrabortty2018tail} and \citet{bakhshizadeh2023exponential} for the related discussion.

\section{Discussion} \label{sec:discussion}

Under the Gaussian setting of the present paper, via an investigation into the information encoded by a key ratio of moments appearing in the standard Berry-Esseen bound for complete U-statistics as in \eqref{BEustat},  we have obtained a solid understanding of why the classical asymptotics of the Wald test cannot be useful near singularities. With the same understanding, our main result in  \secref{main_results} also pinpoints why our proposed incomplete U-statistic construction is a natural alternative for circumventing the issue by trading off just enough statistical power. 
We conclude by discussing several  open issues that are  expected to lead to much future research.

\subsection{Relaxation of Gaussian assumptions}

The standard Gaussian setting as in \eqref{iid_data_sample} serves as a good starting point for looking into the behavior of the Wald test in face of (near-)singularities, and the proofs of our theorems have suggested that the hypercontractive estimate (or Khintchine-type inequality) in \lemref{hypercontraction} plays a key role in explaining the singularity-agnostic behavior of $S_{\lfloor n/m \rfloor}$ and $U_{n,N}'$ with respect to the weak convergent speed. However, we have only adopted the Gaussian assumptions more for its neatness rather than absolute necessity, as similar Khintchine-type inequalities for log-concave distributions can also be found in the literature; see \citet{carbery2001distributional} and the references therein.

\subsection{Berry-Esseen bound for  one-dimensional Studentized incomplete U-statistics.}
While our B-E bound in \thmref{main} captures that the version of $U_{n, N}'$ normalized by the theoretical limiting variance $\sigma^2$ is singularity-agnostic with respect to both the limiting distribution and convergent speed, it would be ideal to be able to establish the same for the practical version normalized by the data-driven ``divide-and-conquer" \emph{Studentizer} $\hat{\sigma}^2$  recommended in \citet[Section 2.3]{sturma2022testing} and  \citet[Appendix A]{chen2019randomized}. Note that establishing B-E bounds for Studentized statistics is a highly non-trivial task that is the subject matter of self-normalized limit theory \citep{SN_limit_theory}, and it was not until  recently that optimal-rate B-E bounds  for Studentized \emph{complete} U-statistics of any kernel degree appeared in works by one of the present authors \citet{leung2023another, leung2023nonuniform}, under minimalistic moment assumptions. We note that the uniform B-E bound for Studentized U-statistics in \citet[Theorem 3.1]{leung2023another} also very much parallels the bound for standardized U-statistics in \eqref{BEustat}, in that an informative moment ratio like $\sigma_h/\sigma_g$ also shows up.

\subsection{Extension to multivariate or high-dimensional settings}

As has been mentioned  at the end of \secref{bkground}, even more challenging is to develop similar singularity-agnostic B-E bounds in the spirit of \thmref{main}, in the multivariate or high-dimensional settings where multiple polynomial constraints are handled at once by an incomplete U-statistic with a vector-valued kernel; after all, a statistical model such as the factor analysis is described by a multitude of such constraints like the vanishing tetrads in \eqref{tetrad_forms}. This endeavor is particularly well-motivated, as \citet{dufour2017wald} have shown that the Wald statistic for multiple polynomial constraints may  diverge in distribution, and hence a singularity-agnostic alternative is even more sought after with this in mind.

\section*{Acknowledgement}

We thank Professor Mathias Drton for some helpful discussion during the early stages of conceiving this project. The project is partially funded by the European Research Council (ERC) under the European Union's Horizon 2020 research and innovation programme (grant agreement No 883818). Nils Sturma acknowledges support by the Munich Data Science Institute at Technical University of Munich via the Linde/MDSI PhD Fellowship program.

\appendix

\section{Review on Sub-Weibull random variables} \label{app:subweibull_review_sec} 
We will invoke the theory of sub-Weibull random variables for the proofs in \appsref{additional_pf_main_sec} and \appssref{sec_claim_pf}, which can be found in a series of recent works  such as \citet{kuchibhotla2022moving, zhang2022sharper, wong2020lasso, vladimirova2020sub}. We  summarize the required results and refer the reader to these references for details. 

\begin{definition}[Sub-Weibull$(\alpha)$ random variable and its properties] For a given $\alpha > 0$, let $ \psi_{\alpha} (\cdot)$ be the function
\[
 \psi_{\alpha} (x)= \exp(x^\alpha) -1 \text{ for } x \geq 0. 
\]
For a real-valued random variable $Y$, consider its $\psi_{\alpha}$-Orlicz norm defined by
\[
\|Y\|_{\psi_\alpha} \equiv \inf \{\eta >0 : \bE[  \psi_{\alpha} (|Z|/\eta)] \leq 1\}; 
\]
then $Y$ is said to be sub-Weibull of order $\alpha$, or sub-Weibull$(\alpha)$,  if $\|Y\|_{\psi_\alpha} < \infty$.  
\end{definition}

The above definition comes from \citet[Definition 2.2]{kuchibhotla2022moving}.
Now we consider the following properties of sub-Weibull random variables:
\begin{lemma} [Relation with $p$-norms] \label{lem:weibull_p_norm_relation}
If $Y$ is sub-Weibull$(\alpha)$, then there exists absolute  constants $C(\alpha), c(\alpha) >0$ depending only on $\alpha$ such that
\begin{equation*} \label{equiv_subweibull_norms}
c(\alpha)  \sup_{ p \geq 1} p^{-1/\alpha} \|Y\|_p \leq \|Y\|_{\psi_\alpha} \leq C(\alpha) \sup_{ p \geq 1} p^{-1/\alpha} \|Y\|_p;
\end{equation*}
\end{lemma}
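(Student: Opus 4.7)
The plan is to prove the two inequalities separately via Taylor series expansion of $\psi_\alpha$ in one direction and tail integration in the other, in both cases reducing the analysis to Stirling's formula for $\Gamma$.

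For the upper bound, I would set $K \equiv \sup_{p \geq 1} p^{-1/\alpha} \|Y\|_p$ (assuming $K<\infty$; the inequality is trivial otherwise) and try to produce a constant $C(\alpha)$ such that $\eta = C(\alpha) K$ satisfies $\bE[\psi_\alpha(|Y|/\eta)] \leq 1$. Expanding
\[
\bE[\psi_\alpha(|Y|/\eta)] = \sum_{k=1}^\infty \frac{\bE[|Y|^{k\alpha}]}{k!\,\eta^{k\alpha}},
\]
I would split the sum at the threshold $k_0 = \lceil 1/\alpha \rceil$. For $k \geq k_0$, the hypothesis gives $\bE[|Y|^{k\alpha}] \leq K^{k\alpha}(k\alpha)^k$; combined with Stirling's bound $k! \geq (k/e)^k$, each such summand is majorized by $(e\alpha K^\alpha/\eta^\alpha)^k$, which becomes a convergent geometric series summable to $\leq 1/2$ once $\eta \geq C_1(\alpha) K$. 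For the finitely many $k < k_0$ (a nontrivial block only when $\alpha<1$), I would use Lyapunov's inequality $\|Y\|_{k\alpha} \leq \|Y\|_1 \vee \|Y\|_{k_0 \alpha}$ to bound these terms as well and absorb them into the constant by taking $\eta$ larger if needed.

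For the lower bound, I would take $\eta = \|Y\|_{\psi_\alpha}$, so that $\bE[\exp((|Y|/\eta)^\alpha)] \leq 2$, and apply Markov's inequality to obtain the tail bound $P(|Y| > t) \leq 2\exp(-(t/\eta)^\alpha)$ for all $t\geq 0$. Writing $\bE|Y|^p = p \int_0^\infty t^{p-1} P(|Y|>t)\,dt$ and substituting $s = (t/\eta)^\alpha$, I get
\[
\bE|Y|^p \leq 2\eta^p \,\frac{p}{\alpha}\, \Gamma(p/\alpha),
\]
and an application of Stirling ($\Gamma(x+1) \leq C\, x^x e^{-x} \sqrt{x}$) yields $\|Y\|_p \leq C(\alpha)\, \eta\, p^{1/\alpha}$ for all $p\geq 1$. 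Rearranging gives the desired inequality.

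The main obstacle I foresee is bookkeeping for small $\alpha$: in the upper-bound direction the expansion indices $k\alpha$ can be well below $1$, at which point the hypothesis $\|Y\|_{k\alpha} \leq K (k\alpha)^{1/\alpha}$ is not directly provided. Handling these finitely many ``small'' terms by monotonicity of $L^p$-norms (Lyapunov's inequality) and then folding the resulting multiplicative slack into $C(\alpha)$ is routine but is the one step where one has to be careful not to let constants blow up; everything else is a direct Stirling calculation.
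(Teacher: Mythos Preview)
Your proposal is correct; both directions follow the standard Taylor-expansion/Stirling and Markov/tail-integration arguments, and your handling of the small-$k$ block via Jensen/Lyapunov is the right fix (indeed for $k<k_0=\lceil 1/\alpha\rceil$ one has $k\alpha<1$, so $\bE[|Y|^{k\alpha}]\leq(\bE|Y|)^{k\alpha}\leq K^{k\alpha}$, and these finitely many terms are then absorbed into $C(\alpha)$).

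The paper itself does not supply a proof of this lemma at all: it simply records that the result is \citet[Lemma~5]{wong2020lasso} and moves on. Your write-up is therefore strictly more than what the paper provides, and the argument you give is essentially the one underlying that cited reference, so there is no methodological divergence to compare.
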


\begin{lemma} [Bernstein inequality for  independent sub-Weibull$(\alpha)$ random variables  for $\alpha \leq 1$]\label{lem:bernstein_sum_weibull}
If $Y_1, \dots, Y_n$ are independent and identically distributed mean-0 sub-Weibull$(\alpha)$ random variables with $\alpha \leq 1$, for some absolute constants $\cA_\alpha$ and $\cC_\alpha$ depending only on $\alpha$ and an absolute constant $\gamma$ (whose value is $\approx 1.78$),  one has the following Bernstein-type inequality that for $z \geq 0$, 
\begin{align*}
&P\Big( |\sum_{i=1}^n Y_i| > z \Big) \\
&\leq 2 \exp \Bigg\{- \Bigg(  
\frac{z^\alpha}{ ( 4e \cC_\alpha \gamma^{2/\alpha} \cA_\alpha \max_{1 \leq i \leq  n}\|Y_i\|_{ \psi_\alpha} )^{\alpha}}
 \wedge
  \frac{z^2}{ 16 e^2 \cC_\alpha^2 \sum_{i=1}^n \|Y_i\|^2_{\psi_\alpha}}\Bigg) \Bigg\} \\
% &=  
% \begin{cases} 
%   2 \exp \Big(-   \frac{z^\alpha}{ ( 4e\cC_\alpha  \gamma^{2/\alpha} \cA_\alpha \max_{1 \leq i \leq  n}\|Z_i\|_{ \psi_\alpha} )^{\alpha}}  \Big)
% & \text{if } 
% z >  
% 4 e \cC_\alpha \Big( \frac{\sum_{i=1}^n \|Z_i\|_{\psi_\alpha}^2}{ \gamma^2 \cA_\alpha^\alpha   \max_{1\leq i \leq n} \|Z_i\|_{\psi_\alpha}^\alpha  } \Big)^{\frac{1}{2 - \alpha}}
%  ;\\
%  2 \exp \Big( -   \frac{z^2}{ 16 e^2 \cC_\alpha^2 \sum_{i=1}^n \|Z_i\|^2_{\psi_\alpha}}\Big)       & 
%  \text{if } z \leq
%    4 e \cC_\alpha \Big( \frac{\sum_{i=1}^n \|Z_i\|_{\psi_\alpha}^2}{ \gamma^2 \cA_\alpha^\alpha   \max_{1\leq i \leq n} \|Z_i\|_{\psi_\alpha}^\alpha  } \Big)^{\frac{1}{2 - \alpha}}.
%  \end{cases}\\
   &=  
 \begin{cases} 
   2 \exp \Big(-   \frac{z^\alpha}{ ( 4e\cC_\alpha  \gamma^{2/\alpha} \cA_\alpha \|Y_1\|_{ \psi_\alpha} )^{\alpha}}  \Big)
 & \text{if } 
 z >  
 4 e \cC_\alpha \Big( \frac{n}{ \gamma^2 \cA_\alpha^\alpha   } \Big)^{\frac{1}{2 - \alpha}} \|Y_1\|_{\psi_\alpha}
  ;\\
  2 \exp \Big( -   \frac{z^2}{ 16 e^2 \cC_\alpha^2 n \|Y_1\|^2_{\psi_\alpha}}\Big)       & 
  \text{if } 
  z \leq
4 e \cC_\alpha \Big( \frac{n}{ \gamma^2 \cA_\alpha^\alpha   } \Big)^{\frac{1}{2 - \alpha}} \|Y_1\|_{\psi_\alpha}.
  \end{cases}
\end{align*}
In particular, for a sufficiently small $z$, the tail probability exhibits sub-Gaussian decay in $z$.
\end{lemma}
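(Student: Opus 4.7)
The plan is to derive the claimed Bernstein-type tail bound via the moment method, optimizing over the moment order $p$. Since $\alpha \leq 1$, the moment-generating function of the individual $Y_i$'s need not exist in any neighborhood of the origin, so a direct Chernoff bound is unavailable; instead, the appropriate tool is a Rosenthal-type moment inequality for sub-Weibull sums that captures two regimes: a Gaussian one for moderate deviations and a sub-Weibull one for large deviations.

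First I would invoke \lemref{weibull_p_norm_relation} to translate the Orlicz-norm hypothesis into the high-moment control $\|Y_i\|_p \leq \cA_\alpha\, p^{1/\alpha} \|Y_i\|_{\psi_\alpha}$ for all $p \geq 1$, with $\cA_\alpha$ depending only on $\alpha$. Next I would apply a Rosenthal-type moment inequality for independent, mean-zero real-valued random variables that is sharp enough to yield, for all $p \geq 2$,
\begin{equation*}
\|S_n\|_p \;\leq\; \cC_\alpha \Bigl[\,\sqrt{p}\,\Bigl(\textstyle\sum_{i=1}^n \|Y_i\|_{\psi_\alpha}^2\Bigr)^{\!1/2} \;+\; p^{1/\alpha}\, \max_{1 \leq i \leq n}\|Y_i\|_{\psi_\alpha}\,\Bigr],
\end{equation*}
where $S_n \equiv \sum_{i=1}^n Y_i$. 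Such a refinement of the classical Rosenthal inequality can be extracted from the sub-Weibull moment machinery developed in the references cited by the paper, in particular \citet{kuchibhotla2022moving, zhang2022sharper}. The constant $\gamma \approx 1.78$ eventually appearing is traceable to the sharp comparison between the $\psi_\alpha$-Orlicz norm and the supremum of scaled $L_p$ norms (a Stirling-type bookkeeping argument).

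With this moment estimate in hand, the tail bound follows from Markov's inequality $P(|S_n| > z) \leq (\|S_n\|_p/z)^p$ after optimizing $p$. If the $\sqrt{p}$-term dominates in the moment bound, the optimal choice is $p \sim z^2/(\cC_\alpha^2 \sum_i \|Y_i\|_{\psi_\alpha}^2)$, producing the sub-Gaussian exponent $-z^2/(16 e^2 \cC_\alpha^2 \sum_i \|Y_i\|_{\psi_\alpha}^2)$; if the $p^{1/\alpha}$-term dominates, the optimal choice is $p \sim (z/(c\max_i\|Y_i\|_{\psi_\alpha}))^\alpha$, producing the sub-Weibull exponent $-(z/(c\max_i\|Y_i\|_{\psi_\alpha}))^\alpha$. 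The transition between these regimes occurs when the two terms balance at the optimizing $p$, which under the i.i.d.\ assumption yields precisely the threshold $z \sim n^{1/(2-\alpha)}\|Y_1\|_{\psi_\alpha}$ displayed in the lemma's case split. The explicit constants $4e\cC_\alpha\gamma^{2/\alpha}\cA_\alpha$, $16 e^2 \cC_\alpha^2$, and the transition coefficient $4e\cC_\alpha\bigl(n/(\gamma^2 \cA_\alpha^\alpha)\bigr)^{1/(2-\alpha)}$ are obtained by careful tracking of the $p \mapsto p^p$ factor and the $\cA_\alpha, \cC_\alpha$ constants through the Markov step, using $(1+1/p)^p \leq e$ to absorb the $p$-dependent prefactor into an $e^{-p}$ tail.

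The \textbf{main obstacle} is the Rosenthal-type sub-Weibull moment inequality with $p^{1/\alpha}$ (not $p$) as the coefficient on $\max_i\|Y_i\|_{\psi_\alpha}$. The classical Rosenthal bound has a factor $p$ there, which is sharp only at $\alpha = 1$; producing the correct $p^{1/\alpha}$ scaling requires either (a) a delicate truncation at level $p^{1/\alpha}\|Y_i\|_{\psi_\alpha}$ followed by Bernstein on the bounded part and a union bound on the sub-Weibull tail, or (b) a direct invocation of the sharper moment inequalities developed in \citet{kuchibhotla2022moving, zhang2022sharper}. Once the correct moment scaling is in place, the remaining optimization over $p$ and the conversion $(\|S_n\|_p/z)^p \to \exp(-p)$ reduces to routine calculus.
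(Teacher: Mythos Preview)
The paper does not prove this lemma: immediately after stating \lemref{bernstein_sum_weibull}, the authors simply attribute it to \citet[Theorem 1(c)]{zhang2022sharper} (see the sentence following \lemref{max_ineq}). So there is no ``paper's own proof'' to compare against.

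Your moment-method sketch is the standard route by which such sub-Weibull Bernstein inequalities are established in the cited references \citep{kuchibhotla2022moving, zhang2022sharper}: obtain a Rosenthal-type bound on $\|S_n\|_p$ with the correct $p^{1/\alpha}$ scaling on the max term, then Markov and optimize in $p$. The outline is sound, and your identification of the $n^{1/(2-\alpha)}$ threshold from balancing the two regimes is correct. The one point worth flagging is that the precise constants $\cA_\alpha$, $\cC_\alpha$, and $\gamma \approx 1.78$ in the lemma's statement are specific to the formulation in \citet{zhang2022sharper}; reproducing them exactly requires following that paper's bookkeeping rather than a generic Rosenthal bound, so if exact constants matter you should invoke their Theorem~1(c) directly rather than redo the optimization.
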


\begin{lemma}[Maximal inequality for sub-Weibull random variables]\label{lem:max_ineq}
For $\alpha >0$, let $Y_1, \dots, Y_k$ be $k$ sub-Weibull$(\alpha)$ random variables, not necessarily independent. For an absolute constant $C(\alpha) >0$ depending only on $\alpha$, 
\[
\|\max_{1 \leq i \leq k} Y_i\|_{\psi_\alpha} \leq C(\alpha) \psi_\alpha^{-1} (2k) \max_{1 \leq i \leq k} \|Y_i\|_{\psi_\alpha}
\]
\end{lemma}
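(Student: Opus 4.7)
The plan is a standard three-step argument: first pass from the $\psi_\alpha$-norm bound of each $Y_i$ to a uniform tail estimate via Markov's inequality, then union-bound the maximum, and finally convert the resulting tail estimate back into a $\psi_\alpha$-norm by integration, calibrating the scaling factor carefully.

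Writing $M = \max_{1 \leq i \leq k} \|Y_i\|_{\psi_\alpha}$, the definition of $\|\cdot\|_{\psi_\alpha}$ combined with $\|Y_i\|_{\psi_\alpha} \leq M$ gives $\bE[\exp((|Y_i|/M)^\alpha)] \leq 2$, and a one-line Markov calculation yields $P(|Y_i| > t) \leq 2\exp(-(t/M)^\alpha)$ for all $t \geq 0$. A union bound then produces
\[
P\bigl(\max_{1 \leq i \leq k} |Y_i| > t\bigr) \;\leq\; 2k\, \exp\bigl(-(t/M)^\alpha\bigr), \qquad t \geq 0.
\]

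To turn this tail estimate back into an Orlicz-norm bound, I would use the layer-cake identity $\bE[e^X - 1] = \int_0^\infty e^u P(X > u)\,du$ for nonnegative $X$ applied to $X = (\max_i |Y_i|/\eta)^\alpha$, giving
\[
\bE\bigl[\psi_\alpha(\max_i |Y_i|/\eta)\bigr] \;=\; \int_0^\infty e^u\, P\bigl(\max_i |Y_i| > \eta u^{1/\alpha}\bigr)\,du.
\]
The critical step is to split the integral at $u^\star = (M/\eta)^\alpha \log(2k)$, the threshold at which the Markov--union tail bound becomes nontrivial; using $P \leq 1$ on $[0, u^\star]$ and the tail estimate above on $(u^\star, \infty)$, elementary calculus yields
\[
\bE\bigl[\psi_\alpha(\max_i |Y_i|/\eta)\bigr] \;\leq\; \frac{(\eta/M)^\alpha}{(\eta/M)^\alpha - 1}\, e^{u^\star} \;-\; 1.
\]

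Setting $\eta = c\,M\,\psi_\alpha^{-1}(2k) = c\,M\,(\log(1+2k))^{1/\alpha}$ makes $(\eta/M)^\alpha = c^\alpha \log(1+2k)$ and $e^{u^\star} \leq e^{1/c^\alpha}$, so the right-hand side becomes at most $e^{1/c^\alpha}/(1 - 1/(c^\alpha \log 3)) - 1$, which can be forced below $1$ by choosing $c = C(\alpha)$ sufficiently large depending only on $\alpha$. The defining property of $\|\cdot\|_{\psi_\alpha}$ then yields $\|\max_i |Y_i|\|_{\psi_\alpha} \leq \eta$, and since $|\max_i Y_i| \leq \max_i |Y_i|$ pointwise, the same bound transfers to the unsigned maximum in the statement. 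The main subtlety, and what one should guard against, is the split-point calibration: applying the Markov--union tail indiscriminately on all of $[0,\infty)$ only produces a polynomial $k^{1/\alpha}$ rate, whereas the logarithmic $\psi_\alpha^{-1}(2k) \asymp (\log k)^{1/\alpha}$ rate requires isolating the trivial contribution $e^{u^\star}$ from the exponentially decaying tail beyond $u^\star$.
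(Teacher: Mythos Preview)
Your proof is correct. It differs from the paper's own justification, which simply invokes \citet[Lemma~2.2.2]{vaart1996weak} and remarks that when $\alpha<1$ the function $\psi_\alpha$ fails to be convex, so one must retain the factor $\psi_\alpha^{-1}(2k)$ rather than simplify to $\psi_\alpha^{-1}(k)$. The van der Vaart--Wellner argument works at the level of the Orlicz function itself: it uses the monotonicity $\psi(\max_i|Y_i|/C)\le\sum_i\psi(|Y_i|/C)$ together with the multiplicative property $\psi(x)\psi(y)\le\psi(cxy)$ (and convexity, in the cleanest version) to convert the bound $\bE[\psi(\max_i|Y_i|/C)]\le k$ into an Orlicz-norm estimate. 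Your route instead passes through a uniform sub-Weibull tail bound, a union bound, and a carefully split layer-cake integral. The advantage of your argument is that it is fully self-contained and never needs convexity or the multiplicative inequality for $\psi_\alpha$, so no separate handling of the case $\alpha<1$ is required; the van der Vaart--Wellner approach, on the other hand, is stated for a broader class of Orlicz functions and makes the role of the structural conditions on $\psi$ more transparent.
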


 \lemsref{weibull_p_norm_relation} and \lemssref{bernstein_sum_weibull} respectively come from  \citet[Lemma 5]{wong2020lasso} and  \citet[Theorem 1$(c)$]{zhang2022sharper}. The maximal inequality in \lemref{max_ineq} can be shown by  standard arguments in the literature regarding Orlicz norms:

\begin{proof}[Proof of \lemref{max_ineq}]
This is  \citet[Lemma 2.2.2]{vaart1996weak}.  Strictly speaking, \citet[Lemma 2.2.2]{vaart1996weak} is only applicable to the case when $\alpha \geq 1$, in which case $\psi_\alpha$ is a convex function and the factor $\psi_\alpha^{-1} (2k)$ can be further simplified to   $\psi_\alpha^{-1} (k)$. However, when $\alpha < 1$, $\psi_\alpha$ is not convex, and one has to remain with the factor $\psi_\alpha^{-1} (2k)$ precisely because convexity was  used in the proof of  \citet[Lemma 2.2.2]{vaart1996weak} to simplify it to $\psi_\alpha^{-1} (k)$
\end{proof}

\begin{lemma} [Sub-Weibull-ness of Gaussian polynomial] \label{lem:GaussPolyIsSubWeibull}
Let $\bG =
 (g_1, \dots, g_r)^T \sim \cN_r(0, \Sigma)$ be an $r$-variate centered normal random vector, and $Q(\bG) = Q(g_1, \dots, g_r)$ be a polynomial of degree $k$ in the variables $g_1, \dots, g_r$.
 Then $Q(\bG)$ is sub-Weibull of order $2/k$.
\end{lemma}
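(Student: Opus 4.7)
The plan is to combine the hypercontractivity bound (\lemref{hypercontraction}) with the equivalence between the sub-Weibull norm and the growth rate of $L_p$-norms (\lemref{weibull_p_norm_relation}). By \lemref{weibull_p_norm_relation}, showing $Q(\bG)$ is sub-Weibull of order $\alpha = 2/k$ reduces to establishing
\[
\sup_{p \geq 1} p^{-k/2} \|Q(\bG)\|_p < \infty.
\]

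First I would apply \lemref{hypercontraction} with the choices $q_1 = 2$ and $q_2 = p$ for an arbitrary $p \geq 2$, which yields
\[
\|Q(\bG)\|_p \;\leq\; C(k) \Bigl(\tfrac{p-1}{1}\Bigr)^{k/2} \|Q(\bG)\|_2 \;\leq\; C(k)\, p^{k/2}\, \|Q(\bG)\|_2.
\]
For $1 \leq p < 2$ we may apply Jensen's (or monotonicity of $L_p$-norms) to control $\|Q(\bG)\|_p \leq \|Q(\bG)\|_2$, which is trivially bounded by a constant multiple of $p^{k/2} \|Q(\bG)\|_2$ in that range as well. Consequently,
\[
\sup_{p \geq 1} p^{-k/2} \|Q(\bG)\|_p \;\leq\; C(k)\,\|Q(\bG)\|_2,
\]
and $\|Q(\bG)\|_2$ is finite because $Q(\bG)$ is a polynomial in jointly Gaussian variables (all moments exist).

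Invoking the lower bound in \lemref{weibull_p_norm_relation}, the finiteness of this supremum is equivalent to $\|Q(\bG)\|_{\psi_{2/k}} < \infty$; quantitatively,
\[
\|Q(\bG)\|_{\psi_{2/k}} \;\leq\; C'(k)\,\|Q(\bG)\|_2,
\]
which is the conclusion. There is no real obstacle here: the argument is essentially a one-line consequence of hypercontractivity combined with the $p$-norm characterization of sub-Weibull-ness. The only subtlety worth flagging is the constraint $q_1 > 1$ in \lemref{hypercontraction}, which is why I start the chain at $q_1 = 2$ rather than $q_1 = 1$; the small $p \in [1,2)$ range is then handled separately by monotonicity of $L_p$-norms.
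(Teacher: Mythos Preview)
Your proof is correct and takes a genuinely different route from the paper's. The paper argues by decomposing $Q(\bG)$ into monomials: each Gaussian coordinate is sub-Weibull of order $2$; a product of $k$ (possibly dependent) such variables is sub-Weibull of order $2/k$ by \citet[Proposition~D.2]{kuchibhotla2022moving}; and a finite sum of sub-Weibull$(2/k)$ variables remains sub-Weibull$(2/k)$. Your argument instead combines hypercontractivity (\lemref{hypercontraction}) directly with the $L_p$-norm characterization of sub-Weibull-ness (\lemref{weibull_p_norm_relation}), which has the advantage of using only lemmas already stated in the paper and of yielding the quantitative estimate $\|Q(\bG)\|_{\psi_{2/k}} \leq C(k)\,\|Q(\bG)\|_2$ rather than a bound depending on the individual coefficients of $Q$ and on $\Sigma$.

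Two minor remarks on the writeup. First, the inequality you need from \lemref{weibull_p_norm_relation} is the \emph{upper} bound $\|Y\|_{\psi_\alpha} \leq C(\alpha)\sup_{p\geq 1}p^{-1/\alpha}\|Y\|_p$, not the lower one as you wrote; the displayed conclusion $\|Q(\bG)\|_{\psi_{2/k}} \leq C'(k)\|Q(\bG)\|_2$ is nonetheless the right one. Second, \lemref{weibull_p_norm_relation} is phrased in the paper with the hypothesis ``$Y$ is sub-Weibull$(\alpha)$'', so strictly speaking it presupposes what you are trying to prove; however, the equivalence is well known to hold unconditionally (the cited source gives it as a characterization), so your use is legitimate.
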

\begin{proof} [Proof of \lemref{GaussPolyIsSubWeibull}]
This is an easy consequence of
\begin{itemize}
\item the fact that a normal random variable is sub-Weibull of order $2$,
\item
 \citet[Proposition D.2]{kuchibhotla2022moving} on the sub-Weibull-ness of a product of possibly dependent sub-Weibull random variables, and 
 \item that a sum of sub-Weibull random variables each of order no more than $\alpha$ is still a sub-Weibull random variable of order $\alpha$.   
 \end{itemize}

\end{proof}

\section{Additional material for \secref{tale}} \label{app:misc_sec} 

This section collects a few miscellaneous facts mentioned in \secref{tale}. 

\subsection{About the variances}\label{app:misc_facts}
\begin{lemma}[Miscellaneous facts] \label{lem:facts} 
These facts are true for the two variances $\sigma_g^2$ and $\sigma_h^2$ defined in \secref{tale}:
\begin{enumerate}
\item $\sigma_g^2 = \frac{  (\nabla f (\Theta))^T V(\Theta) \nabla f (\Theta)}{m^2}$.
\item $\sigma_h > 0$ under the assumption \eqref{non_singular_assumption_Theta}.
\end{enumerate}

\end{lemma}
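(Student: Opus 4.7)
The plan for part $(i)$ is to recognize that the explicit expression for the projection kernel $g$ in \eqref{g_fn_structure}, once its summands are reorganized by the factor $\hat{\theta}_{uv}(\bx)$ they contain, collapses to a linear combination of the $\hat{\theta}_{uv}(\bx)$'s whose coefficients are exactly (a multiple of) the partial derivatives of $f$. Concretely, applying the product rule to the generic monomial $\theta_{u_1v_1}\cdots\theta_{u_rv_r}$ in the expression \eqref{form_of_f} of $f$ produces, for each pair $u\le v$, the identity
\[
\frac{\partial f}{\partial \theta_{uv}}(\Theta) = \sum_{r=1}^m\sum_{j=1}^r\sum_{\substack{1\le u_i\le v_i\le p \text{ for } i\in[r],\\(u_j,v_j)=(u,v)}} a_{u_1v_1,\ldots,u_rv_r}\,\theta_{u_1v_1}\cdots\theta_{u_{j-1}v_{j-1}}\theta_{u_{j+1}v_{j+1}}\cdots\theta_{u_rv_r},
\]
which is exactly the bracket multiplying $\hat{\theta}_{uv}(\bx)/m$ in $g(\bx)$ after switching the order of summation. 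Thus one arrives at the compact representation
\[
g(\bx) = a_0 + \frac{1}{m}\sum_{u\le v}\frac{\partial f}{\partial \theta_{uv}}(\Theta)\,\hat{\theta}_{uv}(\bx).
\]
Given this, $\sigma_g^2 = \var[g(\bX)]$ is the variance of the linear combination (the constant $a_0$ drops out), and hence equals $m^{-2}$ times a quadratic form in $\nabla f(\Theta)$ whose kernel is the covariance matrix of the vector of entries $\hat{\theta}_{uv}(\bX)$. It remains to identify that covariance matrix with $V(\Theta)$, which is a direct Isserlis calculation: $\bE[X_uX_vX_wX_z]-\theta_{uv}\theta_{wz}=\theta_{uw}\theta_{vz}+\theta_{uz}\theta_{vw}$ matches the entry formula \eqref{V_structure}, and the claim follows.

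For part $(ii)$, the plan has two short steps. First, I will argue that $h = h_f$ is a non-constant polynomial in the $pm$ coordinates of $(\bx_1,\ldots,\bx_m)$: if it were the constant polynomial $c$, then the unbiasedness relation $\bE_{\Theta}[h(\bX_1,\ldots,\bX_m)] = f(\Theta)$ would force $f(\Theta)=c$ simultaneously for every positive definite $\Theta$, and since the set of such $\Theta$ is open in $\bR^{\binom{p+1}{2}}$, the polynomial $f$ itself would have to be constant, contradicting the standing hypothesis. Second, under \eqref{non_singular_assumption_Theta} the block-diagonal covariance $\diag(\Theta,\ldots,\Theta)$ of $(\bX_1,\ldots,\bX_m)$ is strictly positive definite, so this Gaussian admits a density that is strictly positive everywhere on $\bR^{pm}$; consequently the zero set of any non-zero polynomial, being of Lebesgue measure zero, also has probability zero, so $h(\bX_1,\ldots,\bX_m)$ cannot be almost surely constant, yielding $\sigma_h>0$.

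I do not anticipate any serious obstacle. The only place that demands a little care is the index bookkeeping in $(i)$: the redundancy of the parametrization in \eqref{form_of_f}, where a pair $(u,v)$ is allowed to appear multiple times within one monomial, is precisely compensated by the inner sum over $j\in[r]$ in \eqref{g_fn_structure}, so the combinatorial accounting aligns exactly with the product-rule derivative of $f$ and no spurious multiplicities appear.
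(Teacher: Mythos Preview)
Your proposal is correct and follows essentially the same route as the paper. For part $(i)$, the paper introduces an auxiliary multiplicity counter $\frake^{uv}_{u_1v_1,\ldots,u_rv_r}$ to track repeated pairs, but both arguments reduce to the identity $g(\bx)=a_0+m^{-1}\sum_{u\le v}\partial_{\theta_{uv}}f(\Theta)\,\hat\theta_{uv}(\bx)$ followed by bilinearity of covariance and identification of $\mathrm{cov}(\hat\theta_{uv}(\bX),\hat\theta_{wz}(\bX))$ with $V(\Theta)_{uv,wz}$; your version simply skips the extra notation. For part $(ii)$, both proofs argue that $h$ is a non-constant polynomial whose zero set has Lebesgue (hence Gaussian) measure zero under \eqref{non_singular_assumption_Theta}; your unbiasedness-plus-open-set justification for the non-constancy of $h$ is in fact more explicit than the paper's one-line assertion.
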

\begin{proof} [Proof of \lemref{facts}$(i)$]
%\begin{equation*} \label{form_of_f}
%f(\Theta) = a_0 + \sum_{r=1}^m  \sum_{ \substack{ 1 \leq u_1,  v_1, \dots, u_r, v_r \leq p\\ 1 \leq u_i \leq v_i \leq p \text{ for } i \in [r]} }a_{u_1 v_1, \dots, u_r v_r}  \theta_{u_1 v_1} \dots \theta_{u_r v_r},
%\end{equation*}
Let's index the gradient vector $\nabla f(\Theta) = ({\nabla f(\Theta)}_{uv})_{ 1\leq u \leq v \leq p}$  by the upper diagonal coordinates in $\Theta$. For a given pair $1 \leq u \leq v \leq p$ and a given set of $r$ pairs $\{(u_1, v_1), \dots, (u_r, v_r)\}$ such that $1 \leq u_i \leq v_i \leq p$ for each $i \in [r]$, define 
\[
\frake^{uv}_{u_1v_1, \dots, u_r v_r} =   \big|\{(u_i, v_i): (u_i, v_i) = (u, v)\}\big|,
\]
which counts the number of pairs $(u_i, v_i)$ that are same as $(u, v)$, where we  remark that the collection $\{(u_1, v_1), \dots, (u_r, v_r)\}$ is a multiset because a given $(u_i, v_i)$ may appear in it more than once. Then, it is easy to see from \eqref{form_of_f} that
\begin{equation} \label{f_gradient_form_uv}
{\nabla f(\Theta)}_{uv} = \\
  \sum_{r=1}^{m}    \sum_{ \substack{ 1 \leq u_i \leq v_i \leq p \\  \text{ for } i \in [r]} }
 \frake^{uv}_{u_1v_1, \dots, u_r v_r}  a_{u_1v_1, \dots, u_r v_r} \theta_{uv}^{\frake^{uv}_{u_1v_1, \dots, u_r v_r}  -1 }\prod_{\substack{i \in [r]: \\(u_i, v_i) \neq (u, v) }} \theta_{u_iv_i},
\end{equation}
%\begin{multline} 
%\big(\nabla f(\Theta)\big)_{uv} = \\
%  \sum_{r=1}^{m}  \sum_{j =1}^r  \sum_{ \substack{ 1 \leq u_i \leq v_i \leq p \\  \text{ for } i \in [r]\backslash\{j\}} }a_{\{u_i v_i\}_{i=1}^{j-1}, uv, \{u_i v_i\}_{i=j+1}^r}  \theta_{u_1 v_1} \dots \theta_{u_{j-1} v_{j-1}} \theta_{u_{j+1} v_{j+1}} \dots \theta_{u_r v_r},
%\end{multline}
%where for given $k_1 \leq k_2$, $\{u_i v_i\}_{i=k_1}^{k_2}$ denotes  the list $\{u_{k_1}v_{k_1}, \dots, u_{k_2}v_{k_2}\}$, and  the inner sum $\sum_{j =1}^r  \sum_{ \substack{ 1 \leq u_i \leq v_i \leq p \\  \text{ for } i \in [r]\backslash\{j\}} } (\cdots)$ is understood to be  $a_{uv}$ when $r = 1$.
At the same time, from \eqref{g_fn_structure}, we  rewrite $g(\bX)$, with the data $\bX$ plugged in, as
\begin{align} 
&g(\bX)   \notag \\
&=  a_0 +
  \frac{1}{m}\sum_{r=1}^{m}   \;\ \sum_{ \substack{ 1 \leq u_i \leq v_i \leq p \\  \text{ for } i \in [r]} } \;\ \sum_{j =1}^r  a_{u_1 v_1, \dots, u_r v_r }   \hat{\theta}_{u_j v_j} (\bX) \prod_{i \in [r] : i \neq j}\theta_{u_i v_i} \notag\\
    &= a_0 +
  \frac{1}{m}\sum_{r=1}^{m}  \;\ \sum_{ \substack{ 1 \leq u_i \leq v_i \leq p \\  \text{ for } i \in [r]} } \;\ \sum_{j =1}^r     \Bigg\{a_{u_1 v_1, \dots, u_r v_r }   \theta_{u_jv_j}^{\frake^{u_jv_j}_{u_1v_1, \dots, u_r v_r}  -1 } \prod_{\substack{i \in [r] : \\ (u_i, v_i) \neq (u_j, v_j)}}\theta_{u_i v_i} \Bigg\}  \hat{\theta}_{u_j v_j} (\bX) \notag\\
  &= a_0 + 
  \frac{1}{m}\sum_{r=1}^{m}   \sum_{ \substack{ 1 \leq u_i \leq v_i \leq p \\  \text{ for } i \in [r]} } 
  \notag \\
  & 
  \qquad
  \sum_{\substack{1\leq u \leq v \leq p: \\(u, v) = (u_i, v_i)\\ \text{ for some } i \in [r]}}
 \Bigg\{ \frake^{uv}_{u_1v_1, \dots, u_r v_r} a_{u_1 v_1, \dots, u_r v_r }   \theta_{uv}^{\frake^{uv}_{u_1v_1, \dots, u_r v_r}  -1 } \prod_{\substack{i \in [r] : \\ (u_i, v_i) \neq (u, v)}}\theta_{u_i v_i} \Bigg\}   \hat{\theta}_{u v}(\bX), \label{last_form_for_gX}
\end{align}
where \eqref{last_form_for_gX} comes from the fact that the innermost sum 
\begin{equation} \label{innermost_sum_of_gX}
\sum_{j =1}^r     \Bigg\{a_{u_1 v_1, \dots, u_r v_r }   \theta_{u_jv_j}^{\frake^{u_jv_j}_{u_1v_1, \dots, u_r v_r}  -1 } \prod_{\substack{i \in [r] : \\ (u_i, v_i) \neq (u_j, v_j)}}\theta_{u_i v_i} \Bigg\}  \hat{\theta}_{u_j v_j} (\bX)
\end{equation}
can be re-expressed as
\[
  \sum_{\substack{1\leq u \leq v \leq p: \\(u, v) = (u_i, v_i)\\ \text{ for some } i \in [r]}}
 \Bigg\{ \frake^{uv}_{u_1v_1, \dots, u_r v_r} a_{u_1 v_1, \dots, u_r v_r }   \theta_{uv}^{\frake^{uv}_{u_1v_1, \dots, u_r v_r}  -1 } \prod_{\substack{i \in [r] : \\ (u_i, v_i) \neq (u, v)}}\theta_{u_i v_i} \Bigg\}   \hat{\theta}_{u v} (\bx),
\]
because in \eqref{innermost_sum_of_gX} the index $j$  essentially ``sweeps" through the $r$ pairs in the multiset $\{(u_1, v_1), \dots, (u_r, v_r)\}$. It is now easy to see that \lemref{facts}$(i)$ is true by  applying the  bilinearity  of the covariance operator to $\text{cov} (g(\bX), g(\bX))$ with the expression in \eqref{last_form_for_gX},  and compare it to $(\nabla f (\Theta))^T V(\Theta) \nabla f (\Theta)$ using the expression in \eqref{f_gradient_form_uv}.
\end{proof}

\begin{proof} [Proof of \lemref{facts}$(ii)$]
Since $f$ is not a constant polynomial, considered as a function in $\bR^{pm}$ the kernel $h$ in \eqref{polynomial_kernel} is also a non-constant polynomial. Hence, the \emph{algebraic set}
\[
\{(\bx_1, \dots, \bx_m) : h(\bx_1, \dots, \bx_m)  = 0\}
\]
must be a proper subset of $\bR^{pm}$ \citep[Proposition 5, Chapter 1, p.3]{CLO2015}, which implies it has a dimension strictly less than $p m$  \citep[Corollary 3.4.5]{RR1991} and is hence of measure zero in $\bR^{pm}$. Given that $(\bX_1, \dots, \bX_m)$ has a non-degenerate joint distribution under \eqref{non_singular_assumption_Theta}, it must be  that  $\var_{H}[h(\bX_1, \dots, \bX_m)] > 0$ since $P_{H}( h(\bX_1, \dots, \bX_m) = 0) = 0$ and $\bE_{H}[h] = 0$.
\end{proof}

We  now give more discussion about our assumption in \eqref{non_singular_assumption_Theta}. As briefly touched upon in the main text, it readily follows from a result of \citet[Proposition 8.2]{eatonmultistat} that the invertibility of $\Theta$ implies that the matrix $V(\Theta)$ is also invertible; in fact, an implication in the opposite direction is also true.  As such, under \eqref{non_singular_assumption_Theta}, the singularity status of $H$ can be defined in terms of the zero'ness of $\nabla f(\Theta)$ because the limiting variance 
\begin{equation}\label{limiting_var_of_wald}
(\nabla f(\Theta))^T V(\Theta) \nabla f(\Theta)
\end{equation}
of $\sqrt{n}f(\hat{\Theta})$
is zero if and only if $\nabla f(\Theta) = 0$. If we relax \eqref{non_singular_assumption_Theta} so that $\Theta$ can become non-invertible, then the singularity of $H$  has to be defined directly in terms of the zeroness of  \eqref{limiting_var_of_wald}, taking into account the eigenspace corresponding to the zero eigenvalues of $V(\Theta)$; moreover, certain polynomials $f$ may also have to be excluded from consideration, because their corresponding kernel $h$ in \eqref{polynomial_kernel} may end up having zero variance $\sigma_h^2$ under $H$.  The Berry-Esseen-type theory developed in this work would not be affected, but we have opted to assume \eqref{non_singular_assumption_Theta}  to streamline the exposition.

% because of it follows that That $\Theta$ is non-singular also necessarily implies the $V(\Theta)$ must be non-singular; this follows from the fact that when $n \geq p$, the Wishart-distributed random sum $\sum_{i=1}^n \bX_i \bX_i^T$, which has a covariance matrix $n V(\Theta)$ as a ${p \choose 2}$-random vector, has a non-degenerate density on $\bR^{p +1\choose 2}$ if and only if $\Theta$ is non-singular .
% 

\subsection{The leading term of $T_f$ in \eqref{wald_stat_standardized} is $U_n$.} \label{app:leading_term_of_Wf}
In light of \lemref{facts}$(ii)$, it suffices to show that $f(\hat{\Theta}) = U_n + o_p(n^{-1/2})$.
By simple enumerations, 
%  \[
% \frac{{n - r \choose m - r} (m-r)! r!}{ {n \choose m} m!} 
% = \frac{\frac{(n - r)! r!}{(n - m)!}}{\frac{n! m!}{ (n - m)! m!}}
% = {n \choose r}^{-1}
% \]
one can first rewrite  $U_n$ as
\begin{equation} \label{U_n_explicit_form}
U_n = a_0 + \sum_{r=1}^m  \sum_{  1 \leq u_i \leq v_i \leq p \text{ for } i \in [r] }a_{u_1v_1, \dots, u_r v_r}  U_{u_1v_1, \dots, u_r v_r},
\end{equation}
where each 
\begin{equation}\label{each_u_stat_uv1tor}
U_{u_1v_1, \dots, u_r v_r} =  {n \choose r}^{-1} \sum_{(i_1, \dots, i_r) \in I_{n, r}} h_{u_1v_1, \dots, u_r v_r} (\bX_{i_1}, \dots, \bX_{i_r})
\end{equation}
is a U-statistic of degree $r$ with the symmetric kernel $h_{u_1v_1, \dots, u_r v_r} : (\bR^p)^r \rightarrow \bR$ defined by
\[
h_{u_1v_1, \dots, u_r v_r}  (\bx_1, \dots \bx_r)= \frac{1}{r!} \sum_{\pi \in \cS_r}  \hat{\theta}_{u_1 v_1}({\bf x}_{\pi_1}) \dots \hat{\theta}_{u_r v_r } ({\bf x}_{\pi_r}),
\]
for $\cS_r$ being the set of all permutations of $(1, \dots, r)$.
Moreover, letting  $\hat{\theta}_{uv} = n^{-1} \sum_{i=1}^n \hat{\theta}_{uv}(\bX_i)$, one can write $f(\hat{\Theta})$ as 
\begin{equation}  \label{fhatTheta_form}
f(\hat{\Theta}) = a_0 + \sum_{r=1}^m  \sum_{ 1 \leq u_i \leq v_i \leq p \text{ for } i \in [r] }a_{u_1 v_1, \dots, u_r   v_r}  \hattheta_{u_1 v_1} \cdots \hattheta_{u_r v_r}.
%&= a_0 + \sum_{r=1}^m  \sum_{1 \leq u_i \leq v_i \leq p \text{ for } i \in [r] }a_{u_1 v_1, \dots, u_r   v_r} f_{u_1  v_1, \dots, u_r v_r} (\hat{\Theta}) , 
\end{equation}
%where
%\[
%f_{u_1v_1, \dots, u_r v_r} (\hat{\Theta}) = \frac{\sum_{i_1 =1}^n \cdots \sum_{i_r = 1}^n \hattheta_{u_1 v_1}(\bX_{i_1}) \dots \hattheta_{u_r v_r}(\bX_{i_r}) }{n^r}.
%\]
Comparing \eqref{U_n_explicit_form} and \eqref{fhatTheta_form}, it further suffices to show that 
\begin{equation} \label{leading_term_boils_down}
\hattheta_{u_1 v_1} \cdots \hattheta_{u_r v_r} = U_{u_1v_1, \dots, u_r v_r} + o_p(n^{-1/2}). 
\end{equation}

Further analyzing $\hattheta_{u_1 v_1} \cdots \hattheta_{u_r v_r}$ gives
\begin{align*}
&\hattheta_{u_1 v_1} \cdots \hattheta_{u_r v_r}   \\
&= \frac{\sum_{i_1 =1}^n \cdots \sum_{i_r = 1}^n \hattheta_{u_1 v_1}(\bX_{i_1}) \cdots \hattheta_{u_r v_r}(\bX_{i_r}) }{n^r} \\
&= 
 \frac{ \sum_{1 \leq i_1 \neq \dots \neq i_r \leq n} \hattheta_{u_1 v_1}(\bX_{i_1}) \cdots \hattheta_{u_r v_r}(\bX_{i_r}) }{n^r} + 
 \frac{\{ \dots \text{ a sum of $n^r - {n \choose r} r! $ many terms} \dots \}}{n^r} \\
 &= \frac{r!\sum_{(i_1, \dots, i_r) \in I_{n, r}} h_{u_1v_1, \dots, u_r v_r}  (\bX_{i_1}, \dots \bX_{i_r})}{n^r} +   \frac{\{ \dots \text{ a sum of $O(n^{r-1})$ many terms} \dots \}}{n^r}. 
\end{align*}
Compare the last expression with \eqref{each_u_stat_uv1tor}, one can readily see that 
\begin{align*}
\hattheta_{u_1 v_1} \cdots \hattheta_{u_r v_r} - U_{u_1v_1, \dots, u_r v_r}&= \frac{n (n-1) \cdots (n-r +1) - n^r}{n^r} U_{u_1v_1, \dots, u_r v_r} + O_p(n^{-1}) \\
&= O(n^{-1}) U_{u_1v_1, \dots, u_r v_r} + O_p(n^{-1}),
\end{align*}
which implies \eqref{leading_term_boils_down} because $U_{u_1v_1, \dots, u_r v_r}  = O_p(1)$.

\section{Additional proofs for \secref{main_results}} \label{app:additional_pf_main_sec}

\subsection{Proof of \lemref{BE_bdd_Bn}, the B-E bound for $B_n$} \label{app:BE_bdd_Bn_pf_sec}

To simplify notation,  for any  ${\bf i} =  (i_1, \dots, i_m) \in I_{n,m}$, we will use  the shorthand ${\bf X}_{\bf i} = ({\bf X}_{i_1}, \dots, {\bf X}_{i_m})$, $Z_{\bf i} = Z_{(i_1, \dots, i_m)}$ and $h({\bf X}_{\bf i}) = h({\bf X}_{i_1}, \dots, {\bf X}_{i_m})$. Moreover, we will let $p = p_{n, N}$. It suffices to show that for any $\eta \in (3/7, 1/2)$, 
\begin{multline} \label{showing_goal_for_Bn_BE_bdd}
\sup_{z' \in \bR}\Bigg| P_H\Bigg(  \frac{ \sqrt{N} B_n }{\sigma_h } \leq  z'\Bigg) -  \Phi(z') \Bigg| \leq 
  \frac{C_1}{\sqrt{N}} \frac{\sqrt{1 - 3p_{n, N} + 3p_{n, N}^2}}{1 - p_{n, N}} 
  \Bigg( \frac{\sqrt{1 +n^{- \eta}}}{1 - n^{ -\eta}} \Bigg)  \\
+ \frac{C_2 \sqrt{m}}{\sqrt{n} (1 - n^{-\eta})} + 6 \exp \Big(  -  \frac{  n^{1 -2 \eta}}{C_3(m) }\Big)  + \frac{ C_4(m) }{ n^{2(1- \eta)}}.
\end{multline}

 We shall first define   two new U-statistics, 
\[
U_{h^2} =  {n \choose m}^{-1} \sum_{{\bf i} \in I_{n,m}} h^2 ({\bf X}_{\bf i})  \text{ and } U_{h^4} = {n \choose m}^{-1} \sum_{{\bf i} \in I_{n,m}} h^4({\bf X}_{\bf i}),
\]
with the non-negative symmetric kernel $h^2$ and $h^4$ derived from $h$. %Since 
%\[
%\bE[|Z_{\bf i} - p_{n,N}|^3] = p_{n,N} (1- p_{n, N})(2p_{n, N}^2 - 2p_{n, N} +1),
%\]
Since
\begin{multline*}
\sup_{z' \in \bR}\Bigg| \PH\Bigg(  \frac{ \sqrt{N} B_n }{\sigma_h } \leq  z'\Bigg) -  \Phi(z') \Bigg| \\
 =  \sup_{z'\in \bR}  \Bigg| \bEH\Bigg[ \bE\Bigg[ I \Bigg(  \frac{ \sqrt{N} B_n }{\sigma_h } \leq  z'\Bigg) -  \Phi(z')  \Bigg| \{{\bf X}_{\bf i}\}_{{\bf i} \in I_{n, m}} \Bigg] \Bigg]\Bigg| \\
\leq   \sup_{z'\in \bR}  \bEH \Bigg[ \Bigg| P\Bigg(  \frac{ \sqrt{N} B_n }{\sigma_h } \leq   z'  \Bigg| \{{\bf X}_{\bf i}\}_{{\bf i} \in I_{n, m}}\Bigg) -  \Phi(z') \Bigg|  \Bigg],
\end{multline*} 
using   the triangular inequality  we  can further get
\begin{align}
\sup_{z' \in \bR}\Bigg| \PH\Bigg(  \frac{ \sqrt{N} B_n }{\sigma_h } \leq  z'\Bigg) -  \Phi(z') \Bigg|  \leq E_1 + E_2, \label{ep1_bdd_by_E1_E2}
\end{align}
 where 
\[
E_1 \equiv \sup_{z' \in \bR} \bEH 
\Bigg[ \Bigg|P\Bigg( \frac{  \sqrt{N} B_n}{\sqrt{U_{h^2}}}   \leq  \frac{\sigma_h z'}{\sqrt{U_{h^2} }} \Bigg| \{{\bf X}_{\bf i}\}_{{\bf i} \in I_{n, m}} \Bigg) - \Phi \Bigg( \frac{\sigma_h z'}{\sqrt{ U_{h^2}}}\Bigg) \Bigg| \Bigg] 
\]
and
\[
E_2 \equiv \sup_{z' \in \bR} \bEH \Bigg[ \Bigg|  \Phi \Bigg( \frac{\sigma_h z'}{\sqrt{ U_{h^2}}}\Bigg) -  \Phi (  z' ) \Bigg|   \Bigg].
\]
Hence, we will bound $E_1$ and $E_2$ separately:

\subsubsection{Bounding $E_1$} \label{app:E1bdd}
We use $\|\cdot\|_{\psi_\alpha, H}$ to denote a sub-Weibull norm defined with expectation taken under $H$.
 By defining $\tau_h \equiv \bE_H[h^4]$, we write the Hoeffding decomposition \citep[p.137]{de2012decoupling} of $U_{h^2}$ and $U_{h^4}$,
\begin{equation} \label{H_decomp_h2_h3}
U_{h^2} - \sigma_h^2= \sum_{r=1}^m {m \choose r} U_{h^2}^{(r)} \text{ and } \quad
  U_{h^4} - \tau_h= \sum_{r=1}^m {m \choose r} U_{h^4}^{(r)},
\end{equation}
where for each $r$, the projection U-statistics look like
\begin{multline*}
U_{h^2}^{(r)} \equiv {n \choose r}^{-1} \sum_{(i_1, \dots, i_r) \in I_{n, r}}  \pi_r (h^2) \Big({\bf X}_{i_1}, \dots, {\bf X}_{i_r}\Big) \text{ and } \\
U_{h^4}^{(r)} \equiv {n \choose r}^{-1} \sum_{(i_1, \dots, i_r) \in I_{n, r}}  \pi_r (h^4) \Big({\bf X}_{i_1}, \dots, {\bf X}_{i_r}\Big);
\end{multline*}
recall the generic Hoeffding projection kernels  defined in \eqref{projection_fns}. We will form  tail probability estimates for $U_{h^2}$ and $U_{h^4}$ via their projections and shall first consider the case when $r = 1$;
%, where both $U_{h^2}^{(1)}$ and $U_{h^4}^{(1)}$ are averages of $n$ independent and identically distributed sub-Weibull random variables with mean zero. 
 in particular,  we first write
\[
U_{h^2}^{(1)} = \frac{1}{n} \sum_{i=1}^n \pi_1 (h^2) (\bX_i) , 
\]
which is an average of the independent and identically distributed 
\begin{equation} \label{1st_proj_h_sq}
\pi_1 (h^2) (\bX_i) =  \bEH[h^2({\bf X}_i, {\bf X}_1', \dots,  {\bf X}_{m-1}' )| {\bf X}_i]  - \sigma_h^2, \quad  i =1, \dots n,
\end{equation}
where ${\bf X}_1', \dots,  {\bf X}_{m-1}'$ are independent copies of ${\bf X}_1$. From \eqref{1st_proj_h_sq}, it is easy to see that
$
\|\pi_1 (h^2)\|_{1, H} \leq 2 \sigma_h^2
$, which implies the tail probability bound
\begin{equation} \label{first_tail_prob_bdd_for_U_h21}
\PH\Bigg( |U_{h^2}^{(1)}| > \frac{ \sigma_h^2}{m^2 n^\eta}\Bigg) \leq \PH\Bigg(\Big|\sum_{i=1}^n \pi_1 (h^2) (\bX_i)\Big| > \frac{ n^{1 - \eta}\|\pi_1 (h^2)\|_{1, H}}{2 m^2 }\Bigg). 
\end{equation}
From \eqref{1st_proj_h_sq} again and the structure of $h$ in \eqref{polynomial_kernel}, one can also see that each $\pi_1 (h^2) (\bX_i)$
is a  Gaussian polynomial of degree $4$ and hence sub-Weibull$(1/2)$ by \lemref{GaussPolyIsSubWeibull}; hence, given that $n^{1 - \eta} < n^{\frac{1}{2 - 1/2}} = n^{2/3}$ as $\eta > 1/3$, as well as the fact that 
$
c \|\pi_1 (h^2) \|_{1, H} \leq \|\pi_1 (h^2) \|_{\psi_{1/2}, H}
$
 for a small enough absolute constant $c$ (depending on the sub-Weibull order $1/2$) via \lemref{weibull_p_norm_relation},  one can further apply the  sub-Gaussian part of the Bernstein's inequality for sub-Weibull random variables in \lemref{bernstein_sum_weibull} to the right hand side of \eqref{first_tail_prob_bdd_for_U_h21}, and further get that for $C (m) >0$, 
\begin{equation} \label{tail_prob_bdd_for_U_h21}
\PH\Bigg( |U_{h^2}^{(1)}| > \frac{ \sigma_h^2}{m^2 n^\eta}\Bigg) 
\leq 2 \exp \Big(  -  \frac{   n^{1 -2 \eta}}{C (m)  }\Big).
\end{equation}
% Lastly, since 
% \begin{align*}
% \|\pi_1 (h^2) \|_{\psi_{1/2}} &\leq  C\sup_{ p \geq 1} p^{-2} \|\pi_1 (h^2)\|_p \text{ by \lemref{weibull_p_norm_relation}} \\
% &\leq C \max \Bigg(\|\pi_1 (h^2)\|_2, \sup_{p \geq 2} p^{-2} \|\pi_1 (h^2)\|_p\Bigg) \\
% &\leq C \|\pi_1 (h^2)\|_2 \text{ by \lemref{hypercontraction} (with $d = 4$)} \\
% &\leq  2 \cdot C \cdot \sqrt{\tau_h} \text{ by the structure of $\pi_1 (h^2)$ in \eqref{1st_proj_h_sq}}, 
% \end{align*}
%from \eqref{second_tail_prob_bdd_for_U_h21} one can further obtain the bound
%\begin{equation} \label{tail_prob_bdd_for_U_h21}
%P\Bigg( |U_{h^2}^{(1)}| > \frac{ \sigma_h^2}{m^2 n^\eta}\Bigg)  \leq 2 \exp \Big(  -  \frac{ \sigma_h^4 n^{1 -2 \eta}}{C(m) \tau_h }\Big).
%\end{equation}
Similarly, using that each $\pi_1 (h^4)(\bX_i)$  is a Gaussian polynomial of degree $8$ and hence sub-Weibull$(1/4)$ by \lemref{GaussPolyIsSubWeibull}, as well as that $n^{1 - \eta} < n^{\frac{1}{2 - 1/4}} = n^{4/7}$ (as $\eta > 3/7$) and 
$
\|\pi_1 (h^2) \|_{1, H} \leq C\|\pi_1 (h^2) \|_{\psi_{1/2}, H}
$, one can derive in a completly analogous manner that
\begin{equation} \label{tail_prob_bdd_for_U_h41}
P_H\Bigg( |U_{h^4}^{(1)}| > \frac{\tau_h}{m^2 n^\eta}\Bigg)\leq 2 \exp \Big(  -  \frac{  n^{1 - 2\eta} }{C(m) }\Big).
\end{equation}
Now, using the Hoeffding decomposition for $U_{h^2}$ in \eqref{H_decomp_h2_h3}  and the tail bound for $U_{h^2}^{(1)}$ in \eqref{tail_prob_bdd_for_U_h21},  by the union bound we get that

\begin{align}
P_H\Big(|U_{h^2} - \sigma_h^2| >  \frac{ \sigma_h^2}{ n^\eta}\Big) 
&\leq  \sum_{r = 1}^m P_H \Bigg( |U_{h^2}^{(r)}| > {m \choose r}^{-1}\frac{ \sigma_h^2}{m n^\eta} \Bigg)  \notag\\
 &\leq 2 \exp \Big(  -  \frac{ n^{1 -2 \eta}}{C(m)  }\Big)  +  \sum_{r = 2}^ m \frac{ \Big(m n^{\eta} {m \choose r}\Big)^2\bE_H \Big[ (U_{h^2}^{(r)})^2 \Big]}{\sigma_h^4} \notag\\
 &\leq 2 \exp \Big(  -  \frac{ n^{1 -2 \eta}}{C_1(m)}\Big)  + C_2(m)\sum_{r = 2}^ m \frac{ \tau_h }{ n^{r - 2\eta}\sigma_h^4} \label{tail_prob_bdd_for_U_h2},
\end{align}
where the last inequality comes from standard second-moment bound for mean 0 U-statistics \citep[Theorem 2.1.3, p.72]{korolyuk2013theory}. Similarly, using the Hoeffding decomposition for $U_{h^4}$ in \eqref{H_decomp_h2_h3}  and the tail bound for $U_{h^4}^{(1)}$ in \eqref{tail_prob_bdd_for_U_h41},
\begin{equation}
P_H\Big(|U_{h^4} - \tau_h| >  \frac{ \tau_h}{ n^\eta}\Big) \leq 2 \exp \Big(  -  \frac{  n^{1 - 2\eta} }{C_1(m) }\Big) + C_2(m) \sum_{r = 2}^ m \frac{ \bE_H[h^8] }{ n^{r - 2\eta}\tau_h^2}. \label{tail_prob_bdd_for_U_h4}
\end{equation}

Define the events
$
\cE_1  \equiv \Bigg\{|U_{h^2} - \sigma_h^2| >  \frac{ \sigma_h^2}{ n^\eta}\Bigg\}$ and $
\cE_2  \equiv \Bigg\{|U_{h^4} - \tau_h| >  \frac{ \tau_h}{ n^\eta}\Bigg\}
$.
First, \emph{conditional on} the observed data $\{{\bf X}_i \}_{i \in [n]}$,
\begin{align*}
\var\Bigg(\sqrt{N} B_n \Bigg| \{{\bf X}_i\}_{i \in [n]}\Bigg)
 &= 
\var\Bigg( {n \choose m}^{-1/2}  \sum_{{\bf i} \in I_{n,m}   } \frac{(Z_{\bf i} - p)h({\bf X}_{\bf i})}{\sqrt{p(1 - p)}}\Bigg| \{{\bf X}_i\}_{ i \in [n]}\Bigg) \\
& = 
 {n \choose m}^{-1} \sum_{{\bf i} \in I_{n,m}} h^2 ({\bf X}_{\bf i}) \equiv  U_{h^2};
\end{align*}
hence, 
by the  classical B-E bound \citep[Theorem 3.6, p.54]{chen2010normal}, for any $z' \in \bR$, we have
\begin{align}
&\Bigg| P\Bigg(\frac{\sqrt{N} B_n}{ \sqrt{U_{h^2}} } \leq \frac{\sigma_h z'}{ \sqrt{ U_{h^2} }}\Bigg| \{{\bf X}_i \}_{ i \in [n]} \Bigg)- 
\Phi \Bigg( \frac{ \sigma_h z'}{\sqrt{U_{h^2}}}\Bigg) \Bigg| \notag \\
&\leq C \sum_{\boldi \in I_{n, m}} \bE \Bigg[ \frac{ |Z_\boldi - p|^3 |h(\bX_\boldi)|^3}{ (U_{h^2}{n \choose m}  p (1- p))^{3/2} } \bigg|   \{\bX_i\}_{ i \in [n]} \Bigg]  \notag \\
&\leq C \sum_{\boldi \in I_{n, m}}
\sqrt{ \frac{\bE[  |Z_\boldi - p|^4 ]  h^4(\bX_\boldi)}{ \Big(U_{h^2}{n \choose m}  p (1- p)\Big)^2 }}
\sqrt{ \frac{\bE[  |Z_\boldi - p|^2 ]  h^2(\bX_\boldi)}{ U_{h^2}{n \choose m}  p (1- p) }} \text{ by Cauchy's inequality }  \notag \\
&\leq C
\sqrt{\sum_{\boldi \in I_{n, m}} \frac{\bE[  |Z_\boldi - p|^4 ]  h^4(\bX_\boldi)}{ \Big(U_{h^2}{n \choose m}  p (1- p)\Big)^2 }}
\underbrace{\sqrt{\sum_{\boldi \in I_{n, m}} \frac{\bE[  |Z_\boldi - p|^2 ]  h^2(\bX_\boldi)}{ U_{h^2}{n \choose m}  p (1- p) }} }_{= 1}\text{ by Cauchy's inequality } \notag \\
&= C \sqrt{ \frac{ (1 - 3p + 3p^2) U_{h^4} }{U_{h^2}^2 {n \choose m} p(1- p)}}  = C \frac{\sqrt{1 - 3p +3 p^2}}{\sqrt{N (1- p)}}
\frac{\sqrt{U_{h^4}}}{U_{h^2}} \label{classical_BE_bdd_Bn_conditional}.
\end{align}
Next, using $\Omega$ to denote the underlying probability space, by the definition of $E_1$ and  \eqref{classical_BE_bdd_Bn_conditional},  we then write 
\begin{align}
E_1 &\leq C \bEH \Bigg[ \frac{\sqrt{1 - 3p +3 p^2}}{\sqrt{N (1- p)}}
\frac{\sqrt{U_{h^4}}}{U_{h^2}} I( \Omega \backslash (\cE_1 \cup \cE_2))
 \Bigg] + \PH(\cE_1 \cup \cE_2) \notag\\
 &\leq  \frac{C_1}{\sqrt{N}} \frac{\sqrt{1 - 3p + 3p^2}}{1 - p} 
  \Bigg( \frac{\sqrt{1 +n^{- \eta}}}{1 - n^{ -\eta}} \Bigg)  
  + 4 \exp \Big( - \frac{n^{1- 2 \eta}}{C_2(m)}\Big) + \frac{C_3 (m)}{n^{2 (1 - \eta)}}, \label{E1_ineq}
\end{align}
where the last inequality  uses \eqref{tail_prob_bdd_for_U_h2} and \eqref{tail_prob_bdd_for_U_h4}, as well as the hypercontractive property that the ratios $\tau_h/\sigma_h^4$ and $\bEH[h^8]/\tau_h^2$ can both be bounded an absolute constant (\lemref{hypercontraction}).

\subsubsection{Bounding $E_2$} \label{app:E2bdd}
Let $\cE_1$  be defined as in \appref{E1bdd}. Using Taylor's expansion around $z'$,  we get that 
\begin{align}
&\Bigg|  \Phi \Bigg( \frac{\sigma_h z'}{\sqrt{ U_{h^2}}}\Bigg) -  \Phi( z') \Bigg| \cdot I(\Omega \backslash \cE_1) \notag\\
&\leq \Bigg\{  \sup_{z' \in \bR} |z'| \phi \Bigg(\frac{|z'|}{\sqrt{1 +   n^{-\eta}}}\Bigg) \Bigg\}
\Bigg|\frac{\sqrt{U_{h^2}} - \sigma_h}{\sqrt{U_{h^2}}}\Bigg| \cdot I(\Omega \backslash \cE_1) \notag\\
&\leq C \Bigg|\frac{\sqrt{U_{h^2}} - \sigma_h}{\sqrt{U_{h^2}}}\Bigg| \cdot I(\Omega \backslash \cE_1) \text{ by $ \sup_{z' \in \bR} |z'| \phi \Bigg(\frac{|z'|}{\sqrt{1 +   n^{-\eta}}}\Bigg) \leq C$ } \notag \\
&\leq C \Bigg|\frac{\sqrt{U_{h^2}} - \sigma_h}{\sqrt{U_{h^2}}}\Bigg| \frac{\sqrt{U_{h_2}} + \sigma_h}{\sqrt{U_{h_2}}} \cdot I(\Omega \backslash \cE_1)  \text{ by  $U_{h^2} \geq 0$ almost surely } \notag \\
&= C \frac{|U_{h^2} - \sigma_h^2|}{U_{h^2}}  \cdot I(\Omega \backslash \cE_1) \leq \frac{C |U_{h^2} - \sigma_h^2|}{\sigma_h^2(1 - n^{ -\eta})} \text{ by the definition of } \cE_1. \label{E2_ineq_primer}
\end{align}
From the inequality in  \eqref{E2_ineq_primer}, using the definition of $E_2$ and \eqref{tail_prob_bdd_for_U_h2},   we have 
\begin{align} 
E_2 &\leq C  \frac{ \bEH[  |U_{h^2} - \sigma_h^2|  ]}{\sigma_h^2(1 - n^{ -\eta})}  + 
2 \exp \Big(  -  \frac{ n^{1 -2 \eta}}{C_1(m)  }\Big)  +
 C_2(m)\sum_{r = 2}^ m \frac{ \tau_h }{ n^{r - 2\eta}\sigma_h^4}  \notag\\
&\leq C \frac{ \sqrt{m\tau_h}}{\sqrt{n} \sigma_h^2(1 - n^{-\eta})} + 
2 \exp \Big(  -  \frac{ n^{1 -2 \eta}}{C_1(m) }\Big)  +
 C_2(m)\sum_{r = 2}^ m \frac{ \tau_h }{ n^{r - 2\eta}\sigma_h^4},  \notag
\end{align}
where the last inequality uses the bound
\[
\bE_H[|U_{h^2} - \sigma^2_h|] \leq \sqrt{\text{var}_H(U_{h^2})} \leq  \sqrt{\frac{m}{n}\bEH[ (h^2 - \sigma_h^2)^2]} \leq \sqrt{\frac{m \tau_h}{n}};
\]
see \citet[Lemma 5.2.1.A$(i)$, p.183]{serfling1980}. Finally, using the fact that the ratio $\frac{\tau_h}{\sigma_h^4}$ can be bounded by an absolute constant by \lemref{hypercontraction}, we further get
\begin{equation}
E_2 \leq  \frac{C\sqrt{m}}{\sqrt{n} (1 - n^{-\eta})} + 2 \exp \Big(  -  \frac{  n^{1 -2 \eta}}{C_1(m) }\Big)  + \frac{ C_2(m) }{ n^{2(1- \eta)}}.
\label{E2_ineq}
\end{equation}

\subsubsection{Overall bound on  $\sup_{z' \in \bR}\Big| \PH\Big(  \frac{ \sqrt{N} B_n }{\sigma_h } \leq  z'\Big) -  \Phi(z') \Big| $} 
 Combining \eqref{ep1_bdd_by_E1_E2}, \eqref{E1_ineq} and \eqref{E2_ineq}, we have shown \eqref{showing_goal_for_Bn_BE_bdd}.
%\begin{multline}
%\sup_{z' \in \bR}\Bigg| \PH\Bigg(  \frac{ \sqrt{N} B_n }{\sigma_h } \leq  z'\Bigg) -  \Phi(z') \Bigg|  \leq 
%  \frac{C_1}{\sqrt{N}} \frac{\sqrt{1 - 3p + 3p^2}}{1 - p} 
%  \Bigg( \frac{\sqrt{1 +n^{- \eta}}}{1 - n^{ -\eta}} \Bigg)  \\
%+ \frac{C_2 \sqrt{m}}{\sqrt{n} (1 - n^{-\eta})} + 6 \exp \Big(  -  \frac{  n^{1 -2 \eta}}{C_3(m) }\Big)  + \frac{ C_4(m) }{ n^{2(1- \eta)}} \text{ for }\eta \in (3/7, 1/2).
%\end{multline}

\subsection{Proof of \lemref{nonunif_BE}, the nonuniform B-E bound} \label{app:nonunif_BE_pf_sec}
We will first state some useful technical results, and the proof will follow after.

\begin{lemma}[Exponential randomized concentration inequality  for a sum of  censored random variables] \label{lem:modified_RCI_bdd}
Let $\xi_1, \dots, \xi_n$ be independent random variables  with mean zero and finite second moments, and for each $i =1, \dots, n$, define 
\[
\bar{\xi}_i = \xi_i I(|\xi_i| \leq 1) + 1 I(\xi_i >1) - 1  I(\xi_i <-1),
\]
an upper-and-lower censored version of $\xi_i$; moreover, let $S =\sum_{i=1}^n \xi_i $ and $\bar{S} = \sum_{i=1}^n \bar{\xi}_i$ be their corresponding sums, and $\Delta_1$ and $\Delta_2$ be two random variables on the same probability space. Let $\delta = \frac{\beta_2 + \beta_3}{4}$ for $\beta_2 \equiv \sum_{i=1}^n \bE[\xi_i^2 I(|\xi_i|>1)]$ and $\beta_3 \equiv \sum_{i=1}^n \bE[\xi_i^3 I(|\xi_i|\leq1)]$, and assume $
\sum_{i=1}^n \bE[\xi_i^2] = 1$. Then it is true that
\begin{align*}
&\bE[e^{ \frac{\barS}{2}} I(\Delta_1 \leq \barS \leq \Delta_2)] \leq 
\left( \mathbb{E}\left[e^{ \bar{S}}\right] \right)^{\frac{1}{2}}\exp\left( - \frac{1}{256 \cdot\delta^2}\right) \\
&+ 8e^{\frac{\delta}{2}} \Biggl\{ 2\sum_{i =1}^n \mathbb{E} [ |\barxi_i|e^{ \frac{\barS^{(i)} }{2} } (|\Delta_1 - \Delta_1^{(i)}| + |\Delta_2 - \Delta_2^{(i)}|)] \\
& + 
\mathbb{E}[|\barS|e^{ \frac{\bar{S}}{2}}(|\Delta_2 - \Delta_1| + 2 \delta)]\\
& +   \sum_{i=1}^n  \Big|\bE[ \barxi_i]\Big| \bE[e^{\frac{ \barS^{(i)}}{2} }(|\Delta_2^{(i)} - \Delta_1^{(i)}| + 2 \delta)]\Biggr\} ,
\end{align*}
where $\Delta_1^{(i)}$ and $\Delta_2^{(i)}$ are any random variables on the same probability space such that $\xi_i$ and $(\Delta_1^{(i)}, \Delta_2^{(i)},  S^{(i)}, \barS^{(i)})$ are independent, where $S^{(i)} \equiv S - \xi_i$ and $\barS^{(i)} \equiv \barS - \barxi_i$. 
 \end{lemma}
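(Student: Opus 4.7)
My plan is to prove \lemref{modified_RCI_bdd} by adapting the randomized concentration machinery in Stein's method (\citealp{shao2010icm, leung2023another, leung2023nonuniform}) to the present setting, in which the indicator window $[\Delta_1,\Delta_2]$ is random with potentially different leave-one-out surrogates $\Delta_j^{(i)}$, and the whole expectation is weighted by the exponential factor $e^{\bar{S}/2}$. The proof splits naturally into a tail part and a bulk part; the first produces the $(\bE[e^{\bar{S}}])^{1/2}\exp(-1/(256\delta^2))$ summand, and the second produces the three types of remainder terms in the curly braces.

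First, I would choose a threshold $t^{\star}$ of order $1/\delta$ and write
\[
\bE\bigl[e^{\bar{S}/2}\, I(\Delta_1\leq \bar{S}\leq \Delta_2)\bigr]
= \bE\bigl[e^{\bar{S}/2}\, I(\cdots)\, I(\bar{S}\leq -t^{\star})\bigr]
+ \bE\bigl[e^{\bar{S}/2}\, I(\cdots)\, I(\bar{S}> -t^{\star})\bigr].
\]
For the first summand I apply Cauchy--Schwarz, getting $(\bE[e^{\bar{S}}])^{1/2}\, P(\bar{S}\leq -t^{\star})^{1/2}$; since $|\bar{\xi}_i|\leq 1$ almost surely, a Hoeffding/Bernstein estimate for sums of bounded independent variables gives $P(\bar{S}\leq -t^{\star})\leq \exp(-c\, t^{\star 2})$, and calibrating $t^{\star}\asymp 1/\delta$ yields the announced $\exp(-1/(256\delta^2))$.

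For the bulk part, I would set up the Stein-type leave-one-out identity
\[
\bE[\bar{S}\, F(\bar{S})] = \sum_{i=1}^n \bE\bigl[\bar{\xi}_i\bigl(F(\bar{S}) - F(\bar{S}^{(i)})\bigr)\bigr]
+ \sum_{i=1}^n \bE[\bar{\xi}_i]\, \bE\bigl[F(\bar{S}^{(i)})\bigr],
\]
where independence of $\bar{\xi}_i$ and $\bar{S}^{(i)}$ is crucial. The plan is to pick $F$ so that $\bE[\bar{S}\, F(\bar{S})]$ dominates the target expectation modulo a Gaussian-Stein correction, namely the primitive-type function $F(t;\Delta_1,\Delta_2)=\int_{\Delta_1}^{t\wedge\Delta_2} e^{u/2}\,du\cdot I(t\geq\Delta_1)$; a short manipulation using Stein's linear-operator identity for the exponential weight then reduces $\bE[e^{\bar{S}/2} I(\Delta_1\leq\bar{S}\leq\Delta_2)]$ to $\bE[\bar{S}\, F(\bar{S})]$ up to an $e^{\delta/2}$ factor and the tail contribution above. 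Since $\Delta_1,\Delta_2$ are correlated with the $\xi_i$'s, inside each leave-one-out slot I replace $\Delta_j$ by the independent copy $\Delta_j^{(i)}$ and absorb the discrepancy using the triangle inequality, which is exactly what generates the terms $\bE[|\bar{\xi}_i| e^{\bar{S}^{(i)}/2}(|\Delta_1-\Delta_1^{(i)}|+|\Delta_2-\Delta_2^{(i)}|)]$. The "same-endpoint" differences $F(\bar{S})-F(\bar{S}^{(i)})$ with frozen $\Delta_j$ are controlled by a mean-value estimate yielding the middle term $\bE[|\bar{S}|e^{\bar{S}/2}(|\Delta_2-\Delta_1|+2\delta)]$, where the $2\delta$ accounts for the mismatch between $S$ and its censored version $\bar{S}$ via the budget $\beta_2+\beta_3$. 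The last group of terms comes directly from the $\sum_i \bE[\bar{\xi}_i]\,\bE[F(\bar{S}^{(i)})]$ piece, after substituting the same $F$ with $\Delta_j^{(i)}$ in place of $\Delta_j$.

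The main obstacle, as I see it, is orchestrating the integration-by-parts step that converts $\bE[e^{\bar{S}/2} I(\Delta_1\leq\bar{S}\leq\Delta_2)]$ into a leave-one-out-friendly form while $\bar{S},\Delta_1,\Delta_2$ are all random and entangled with the $\xi_i$'s: the test function $F$ must be chosen so that, after the endpoint swap $\Delta_j\mapsto \Delta_j^{(i)}$, the residual terms organize into exactly the three groups in the displayed bound, and so that the exponential weight $e^{\bar{S}/2}$ (or $e^{\bar{S}^{(i)}/2}$) only incurs the harmless $e^{\delta/2}$ penalty. The censoring $|\bar{\xi}_i|\leq 1$ together with the budget $\delta=(\beta_2+\beta_3)/4$ is precisely what allows this penalty, and the associated Bernstein-type tail bound, to coexist in the same estimate.
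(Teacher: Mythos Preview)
The paper does not actually prove \lemref{modified_RCI_bdd}: immediately after stating it together with \lemref{exp_xi_bi_bdd} and \lemref{bennett_censored}, the authors write that these ``are small technical results for proving U-statistic B-E bounds that can be found in \citet[Appendix~A]{leung2023nonuniform}.'' So there is no in-paper argument to compare your proposal against; the paper simply imports the result.

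That said, your outline is broadly consistent with how such exponential randomized concentration inequalities are established in the Stein's-method literature you cite (\citealp{shao2010icm, leung2023another, leung2023nonuniform}): a tail/bulk split at a threshold of order $1/\delta$, Cauchy--Schwarz plus a Bernstein-type bound on $P(\bar S\le -t^\star)$ for the tail piece, and a leave-one-out Stein identity applied to a primitive-type test function for the bulk. Two places deserve more care than your sketch gives them. First, the $\bar\xi_i$ are bounded but \emph{not} mean zero, so the tail bound $P(\bar S\le -t^\star)\le \exp(-c\,t^{\star 2})$ cannot be read off from Hoeffding directly; one route is Bernstein with variance proxy $\sum_i\bE[\bar\xi_i^2]\le 1$ together with $|\bE[\bar S]|\le \beta_2\le 4\delta$ from \lemref{exp_xi_bi_bdd}, which is what ultimately produces the constant $256$. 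Second, the passage you describe as ``a short manipulation using Stein's linear-operator identity'' is where most of the work lies: one has to choose $F$ so that $F'(t)=e^{t/2}I(\Delta_1\le t\le \Delta_2)$ and then control $F(\bar S)-F(\bar S^{(i)})$ both through the endpoint swap $\Delta_j\mapsto\Delta_j^{(i)}$ and through the increment $\bar\xi_i$, with the smoothing window widened by $\delta$; the factor $e^{\delta/2}$ arises because $e^{\bar S/2}\le e^{1/2}e^{\bar S^{(i)}/2}$, and the $2\delta$ inside the second and third braces comes from that widening rather than from any $S$ versus $\bar S$ mismatch as you suggest. None of this invalidates your plan, but the bookkeeping is heavier than your sketch indicates, and since the paper defers entirely to \citet{leung2023nonuniform}, that is where the detailed argument would have to be checked.
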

 
 \begin{lemma}[Bound on expectation of $\bar{\xi}_i$] \label{lem:exp_xi_bi_bdd} Let $\bar{\xi}_i = \xi_i I(|\xi_i| \leq 1) + 1 I(\xi_i >1) - 1  I(\xi_i <-1)$ with $\bE[\xi_i] = 0$. Then
\[
  \left|\bE[ \barxi_i]\right| \leq   \bE[ \xi_i^2I(|\xi_i| > 1) ]  \leq \bE[|\xi_i|^3] \wedge\bE[ \xi_i^2 ]
\]
\end{lemma}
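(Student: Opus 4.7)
The plan is to reduce everything to an estimate on the truncation remainder $\bar{\xi}_i - \xi_i$, which is supported on the event $\{|\xi_i| > 1\}$. Since $\bE[\xi_i] = 0$, we may rewrite
\[
\bE[\bar{\xi}_i] \;=\; \bE[\bar{\xi}_i - \xi_i] \;=\; \bE\bigl[(1 - \xi_i)\,I(\xi_i > 1)\bigr] + \bE\bigl[(-1 - \xi_i)\,I(\xi_i < -1)\bigr],
\]
because on $\{|\xi_i|\leq 1\}$ the two variables agree.

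From here, I would apply the triangle inequality and observe that on $\{\xi_i > 1\}$ one has $|1 - \xi_i| = \xi_i - 1 \leq \xi_i^2$, while on $\{\xi_i < -1\}$ one has $|-1 - \xi_i| = -\xi_i - 1 \leq \xi_i^2$. Combining these two pointwise bounds yields
\[
|\bE[\bar{\xi}_i]| \;\leq\; \bE\bigl[\xi_i^2 I(\xi_i > 1)\bigr] + \bE\bigl[\xi_i^2 I(\xi_i < -1)\bigr] \;=\; \bE\bigl[\xi_i^2 I(|\xi_i| > 1)\bigr],
\]
which is the first inequality claimed.

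The second inequality is immediate from two trivial observations: $\xi_i^2 I(|\xi_i|>1) \leq \xi_i^2$ pointwise, and on $\{|\xi_i|>1\}$ we have $\xi_i^2 \leq |\xi_i|^3$ so that $\bE[\xi_i^2 I(|\xi_i|>1)] \leq \bE[|\xi_i|^3 I(|\xi_i|>1)] \leq \bE[|\xi_i|^3]$. Taking the minimum gives the stated bound. There is no real obstacle here; the only subtlety worth flagging is that the two terms in the decomposition of $\bE[\bar{\xi}_i]$ have opposite signs, so one might be tempted to exploit cancellation, but since we only need an upper bound on $|\bE[\bar{\xi}_i]|$ the triangle inequality suffices.
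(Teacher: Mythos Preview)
Your proof is correct. The paper does not actually supply its own proof of this lemma; it cites \citet[Appendix A]{leung2023nonuniform} as the source. Your argument---writing $\bE[\bar{\xi}_i]=\bE[\bar{\xi}_i-\xi_i]$, bounding $|\bar{\xi}_i-\xi_i|\le \xi_i^2$ on $\{|\xi_i|>1\}$, then using $\xi_i^2 I(|\xi_i|>1)\le \xi_i^2\wedge|\xi_i|^3$---is precisely the natural one and matches the standard derivation.
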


 \begin{lemma} [Bennett's inequality for a sum of censored random variables]  \label{lem:bennett_censored} 
Let $\xi_1, \dots, \xi_n$ be independent random variables with $\bE[\xi_i] = 0$ for all $i = 1, \dots, n$ and $\sum_{i=1}^n \bE[\xi_i^2] \leq 1$, and define $\barxi_i = \xi_i I(|\xi_i| \leq 1) + 1 I(\xi_i >1) - 1  I(\xi_i <-1)$. 
For any $t >0$ and $\barS =  \sum_{i=1}^n \barxi_i$, we have
\[
\bE[e^{t \barS}] \leq  \exp\left( e^{2t}/4 - 1/4 + t/2\right).
\]
\end{lemma}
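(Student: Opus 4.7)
The plan is to reduce the bound to a per-summand MGF estimate via independence, then recover the claimed expression by an elementary calculus comparison. By independence of the $\barxi_i$ we have $\bE[e^{t\barS}] = \prod_{i=1}^n \bE[e^{t\barxi_i}]$, so it suffices to control each factor uniformly in the (possibly nonzero) mean of $\barxi_i$.

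For a fixed $i$, the truncation $|\barxi_i| \leq 1$ together with a Taylor expansion of the exponential yields the pointwise estimate
\[
e^{t\barxi_i} \leq 1 + t\barxi_i + \barxi_i^2 (e^t - 1 - t);
\]
this is because $(t\barxi_i)^k \leq t^k \barxi_i^2$ for every $k \geq 2$ when $t \geq 0$ (even powers are dominated by $\barxi_i^2$ since $|\barxi_i| \leq 1$, while odd powers with $\barxi_i < 0$ are nonpositive, hence bounded by the nonnegative $t^k \barxi_i^2$). Replacing $\barxi_i^2$ by $\xi_i^2$ on the right (which is valid since $|\barxi_i| \leq |\xi_i|$ pointwise) and taking expectation gives $\bE[e^{t\barxi_i}] \leq 1 + t\bE[\barxi_i] + \bE[\xi_i^2](e^t - 1 - t)$.

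The central step is to absorb the mean term via \lemref{exp_xi_bi_bdd}, which supplies $|\bE[\barxi_i]| \leq \bE[\xi_i^2]$; combined with $t \geq 0$, this yields $\bE[e^{t\barxi_i}] \leq 1 + \bE[\xi_i^2](e^t - 1) \leq \exp(\bE[\xi_i^2](e^t - 1))$. Multiplying across $i$ and invoking $\sum_{i=1}^n \bE[\xi_i^2] \leq 1$ produces the intermediate bound $\bE[e^{t\barS}] \leq \exp(e^t - 1)$, which is actually sharper than the target.

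Finally, converting this into the stated inequality reduces to verifying the deterministic fact $e^t - 1 \leq e^{2t}/4 - 1/4 + t/2$ for $t \geq 0$. Setting $g(t) = e^{2t} - 4e^t + 3 + 2t$, one has $g(0) = g'(0) = 0$ and $g''(t) = 4e^t(e^t - 1) \geq 0$ on $[0,\infty)$, so $g \geq 0$ and the inequality follows. I do not anticipate any genuine obstacle; the only mildly delicate point is the indeterminate sign of $\bE[\barxi_i]$, which is neutralized uniformly by the absolute-value estimate of \lemref{exp_xi_bi_bdd}.
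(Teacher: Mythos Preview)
Your proof is correct. The paper itself does not reproduce a proof of this lemma; it simply cites \citet[Appendix A]{leung2023nonuniform} as the source, so there is no in-paper argument to compare against. Your route---the standard Bennett-type per-factor estimate $e^{t\barxi_i}\le 1+t\barxi_i+\barxi_i^2(e^t-1-t)$, followed by $\barxi_i^2\le\xi_i^2$, the mean bound $|\bE[\barxi_i]|\le\bE[\xi_i^2]$ from \lemref{exp_xi_bi_bdd}, and then the elementary calculus check---is exactly how such inequalities are typically proved, and every step is justified. Your intermediate bound $\bE[e^{t\barS}]\le\exp(e^t-1)$ is indeed strictly sharper than the stated conclusion, so the final comparison $e^t-1\le e^{2t}/4-1/4+t/2$ is only needed to match the lemma's specific form (which is presumably tailored to how it is invoked downstream in \citet{leung2023nonuniform}).
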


\begin{lemma} \label{lem:special_bound}
Let $\xi_1, \dots, \xi_n$ be independent mean zero random variables satisfying $\sum_{i=1}^n \bE[\xi_i^2] = 1$ and let $S = \sum_{i  = 1} \xi_i$. Then for $z \geq 0$ and $p \geq 2$, 
\begin{multline*}
P\Big(S \geq z, \max_{1  \leq i \leq n} |\xi_i| > 1\Big) \leq\\
 2 \sum_{1 \leq i \leq n} P\Bigg( |\xi_i| > \frac{z}{2p} \Bigg) + e^p( 1 + z^2/(4p) )^{-p} \sum_{i=1}^n \bE[\xi_i^2 I(|\xi_i|>1)]
\end{multline*}

\end{lemma}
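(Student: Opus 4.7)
The plan is to decompose the event $\{S \geq z,\ \max_i |\xi_i| > 1\}$ by the size of the largest summand, using the threshold $M := z/(2p)$. First I would write
\[
P\bigl(S \geq z,\ \max_i |\xi_i| > 1\bigr) \leq P\bigl(\max_i |\xi_i| > M\bigr) + P\bigl(S \geq z,\ 1 < \max_i |\xi_i| \leq M\bigr);
\]
the first term is bounded by $\sum_i P(|\xi_i| > z/(2p))$ via a union bound, contributing to the first term of the claim. If $M \leq 1$ the second event is empty, so I may assume $M > 1$.

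For the second probability, a union bound over the index $i$ for which $|\xi_i| > 1$ gives $\sum_i P\bigl(S \geq z,\ 1 < |\xi_i| \leq M,\ \max_{j \neq i}|\xi_j| \leq M\bigr)$. On each summand's event, $\xi_i \leq M$ forces $S^{(i)} := S - \xi_i \geq z(1 - 1/(2p))$. Independence of $\xi_i$ from $\{\xi_j\}_{j \neq i}$ and the elementary bound $I(|\xi_i|>1) \leq \xi_i^2 I(|\xi_i|>1)$ (valid since $\xi_i^2 > 1$ on $\{|\xi_i|>1\}$) then bound each summand by $\bE[\xi_i^2 I(|\xi_i|>1)] \cdot P\bigl(S^{(i)} \geq z(1 - 1/(2p)),\ \max_{j \neq i}|\xi_j| \leq M\bigr)$. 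The core remaining task is to establish
\[
P\bigl(S^{(i)} \geq z(1 - 1/(2p)),\ \max_{j \neq i}|\xi_j| \leq M\bigr) \leq e^p \bigl(1 + z^2/(4p)\bigr)^{-p}.
\]
On this event $S^{(i)}$ agrees with $\sum_{j \neq i} \tilde\xi_j$ where $\tilde\xi_j := \xi_j I(|\xi_j| \leq M)$, a sum of independent random variables bounded in absolute value by $M$. I would use an exponential Markov / Chernoff bound $e^{-\lambda z(1-1/(2p))} \prod_{j \neq i} \bE[e^{\lambda \tilde\xi_j}]$, estimate each moment generating function by the Bennett-type inequality $\bE[e^{\lambda \tilde\xi_j}] \leq \exp\bigl(\lambda \bE[\tilde\xi_j] + \bE[\tilde\xi_j^2]\varphi(\lambda M)\bigr)$ with $\varphi(x) = (e^x - 1 - x)/x^2$, and choose a Bennett-type $\lambda$ of order $M^{-1}\log(1 + zM/2)$ to produce the $(1 + z^2/(4p))^{-p}$ decay, with the prefactor $e^p$ absorbing the $\varphi(\lambda M)$ correction and the truncation centering.

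\textbf{The main obstacle} is the precise calibration of this final Chernoff step. Producing exactly the constants $1/(4p)$ inside the power and $e^p$ in front (as opposed to qualitatively similar but numerically different constants) requires fine-tuning the choice of $\lambda$ and carefully invoking $\sum_j \bE[\tilde\xi_j^2] \leq 1$. Moreover, the mean correction $\lambda \sum_j |\bE[\tilde\xi_j]|$, which is non-zero because truncation breaks the mean-zero property and satisfies $|\bE[\tilde\xi_j]| \leq \bE[\xi_j^2 I(|\xi_j|>M)]/M$, must be re-expressed via Markov as an additional $\sum_j P(|\xi_j|>M)$ contribution so that it can be folded into the first term on the right-hand side; this re-bundling is what likely accounts for the extra factor of $2$ there, and managing it without loss is the key technical point.
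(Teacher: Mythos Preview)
Your approach is correct and coincides with the proof of \citet[Lemma~8.3]{chen2010normal}, which the paper cites in lieu of a self-contained argument. One refinement to your closing paragraph: the truncation mean need not be rerouted through an extra $\sum_j P(|\xi_j|>M)$ term. With $\lambda = M^{-1}\log\bigl(1+z^2/(4p)\bigr)$ and your own estimate $|\bE[\tilde\xi_j]| \le \bE[\xi_j^2]/M$, the per-summand bound becomes $\bE[e^{\lambda\tilde\xi_j}] \le 1 + \bE[\xi_j^2](e^{\lambda M}-1)/M^2$; using $\sum_{j\neq i}\bE[\xi_j^2]\le 1$ the Chernoff exponent is then $p-(2p-1)\log\bigl(1+z^2/(4p)\bigr) \le p-p\log\bigl(1+z^2/(4p)\bigr)$, which is exactly the stated bound, so the factor $2$ in the first term is slack rather than a byproduct of the centering.
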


 \lemsref{modified_RCI_bdd}, \lemssref{exp_xi_bi_bdd} and \lemssref{bennett_censored} are small technical results for proving U-statistic B-E bounds that can be found  in \citet[Appendix A]{leung2023nonuniform}. \lemref{special_bound} comes from  \citep[Lemma  8.3, p.238]{chen2010normal}; 
while it was originally stated for any $z \geq 2$ and the tail probability being bounded  is $P\Big(S \geq z, \max_{1  \leq i \leq n} \xi_i > 1\Big)$ instead, the current version still stands with an essentially identical proof.

\begin{proof}[Proof of \lemref{nonunif_BE}] 
With the uniform B-E bound in \eqref{BEustat}, we can without loss of generality, we assume $|x| \geq 1$; we can actually further assume $x \geq 1$, because one can alternatively consider $-h$ for negative value of $x$.
With the U-statistics constructed with the Hoeffding projected kernels in \eqref{H_proj_u_stat}, we first let 
\begin{multline}\label{xi_Delta_def}
 \xi_i \equiv \frac{g({\bf X}_i)}{\sqrt{n}\sigma_g} \text{ and } S \equiv \sum_{i=1}^n \xi_i, \text{ as well as } \\
 \Delta \equiv {n -1 \choose m -1}^{-1} \sum_{1 \leq i_1 < \dots < i_m \leq n} \frac{\bar{h}_m(X_{i_1}, X_{i_2}, \dots, X_{i_m}) }{\sqrt{n}\sigma_g} = \frac{\sqrt{n}}{ m \sigma_g} \sum_{r=2}^m {m \choose r} U_n^{(r)},
\end{multline}
where
$
\bar{h}_m(x_1 \dots, x_k) \equiv h(x_1 \dots, x_m) - \sum_{i=1}^m g(x_i)$,
so  by the Hoeffding decomposition in \eqref{H_decomp} we can write
\[
 \frac{\sqrt{n}}{m \sigma_g} U_n =  S+ \Delta.
\]
 Moreover, for each $i =1, \dots, n$, we let
\[
\Delta^{(i)} =  {n -1 \choose m -1}^{-1} \sum_{\substack{1 \leq i_1 < \dots < i_m \leq n\\ i_l \neq i \text{ for } l = 1, \dots, m}} \frac{\bar{h}_m(X_{i_1}, X_{i_2}, \dots, X_{i_m}) }{\sqrt{n} \sigma_g}
\]
be the ``leave-one-out" version of $\Delta$ that omits all terms involving ${\bf X}_i$. 
 In addition, we define the censored versions of  some of the above variables:
 \[
\barxi_i =  \xi_i I(|\xi_i| \leq 1) + 1 I(\xi_i >1) - 1  I(\xi_i <-1), \qquad \barS = \sum_{i=1}^n  \barxi_i ,
 \]
\begin{equation*} \label{Delta_censored}
\bar{\Delta}_x = \Delta I\left(|\Delta| \leq  \frac{x +1}{3} \right) +  \frac{x +1}{3} I\left(\Delta >   \frac{x +1}{3}\right) -  \frac{x +1}{3}I\left(\Delta < -  \frac{x +1}{3}\right)
\end{equation*} 
and
\begin{equation*} \label{Delta_censored_i}
\bar{\Delta}_x^{(i)} = \Delta^{(i)} I\left(| \Delta^{(i)}| \leq  \frac{x +1}{3} \right) +  \frac{x +1}{3} I\left( \Delta^{(i)} >   \frac{x +1}{3}\right) -  \frac{x +1}{3}I\left( \Delta^{(i)} < -  \frac{x +1}{3}\right).
\end{equation*} 
Since 
\[
 -  P_H(x - |\bar{\Delta}_x| \leq S \leq x  ) \leq P_H(S + \bar{\Delta}_x \leq  x) - P_H(S \leq x)  \leq P_H(x \leq S \leq x + |\bar{\Delta}_x|),
 \]
we can write
\begin{align}
&\Bigg|P_H\Big( \frac{\sqrt{n}}{m \sigma_g} U_n  \leq x \Big) -\Phi(x)\Bigg| \notag \\
 &\leq \Big|P_H(S + \bar{\Delta}_x \leq   x) - \Phi(x) \Big| + P_H\left(|\Delta| >  \frac{x +1}{3} \right) \notag \\
&\leq  \Big|P_H(S \leq   x) - \Phi(x) \Big| + P_H\left(|\Delta| >  \frac{x +1}{3} \right) +   P_H(x - |\bar{\Delta}_x| \leq S \leq x  )\vee P_H(x \leq S \leq x + |\bar{\Delta}_x|) \notag\\
&\leq \Big|P_H(S  \leq   x) - \Phi(z) \Big| + P_H\left(|\Delta| >  \frac{x +1}{3} \right) +  \notag \\
 &\hspace{2cm}P_H(S \geq x/3,  \max_{1 \leq i \leq n} |\xi_i| > 1)  + P_H(x - |\bar{\Delta}_x| \leq \barS \leq x  )\vee P_H(x \leq \barS \leq x + |\bar{\Delta}_x|) .
\label{chopping_Delta}
\end{align}
Next, we develop bounds for the various terms in \eqref{chopping_Delta}.
\subsubsection{Bound on $|P_H(S  \leq   x) - \Phi(x) | $} \label{sec:1st_little_bdd_nonunif} By the classical B-E bound for a sum of independent random variables \citep[Theorem 3.6, p.54]{chen2010normal}, we have
\[
|P_H(S  \leq   x) - \Phi(x) | \leq 9.4 \sum_{i=1}^n \bE[|\xi_i|^3]
% \leq
% \frac{9.4  \bE[g({\bf X_1})^3]}{\sqrt{n} \sigma_g^3}
\]

\subsubsection{Bound on $P_H(x - |\bar{\Delta}_x| \leq \barS \leq x  )\vee P_H(x \leq \barS \leq x + |\bar{\Delta}_x|)$} \label{sec:2nd_little_bdd_nonunif} 
We will first bound 
$
P_H(x - |\bar{\Delta}_x| \leq \barS \leq x  )$.
 Applying  \lemref{modified_RCI_bdd} along with the choice  $\Delta_1 =  x - |\bar{\Delta}_x|$, $\Delta_1^{(i)} =  x - |\bar{\Delta}^{(i)}_x|$, $\Delta_2 = \Delta_2^{(i)} = x$, by the fact that $x - |\bar{\Delta}_1| \geq x/3$, we get that
\begin{align}
&P_H(x - |\bar{\Delta}_x| \leq \barS \leq x  ) \leq 
e^{-x/6} \left( \bEH\left[e^{ \barS }\right] \right)^{1/2}\exp\left( - \frac{1}{ 16 (\sum_{i=1}^n \bEH[|\xi_i|^3])^2}\right) \notag\\
&+8 e^{ 1/8 - x/6} 
\Biggl\{  2\sum_{i =1}^n \bEH \Big[|\barxi_i | e^{\barS^{(i)}/2 }  \Big|\bar{\Delta}_x - \bar{\Delta}_x^{(i)}\Big|\Big]  \notag \\
& \hspace{3cm}+
\bEH\Big[|\barS| e^{ \barS/2 } \Big(|\bar{\Delta}_x| + \frac{\sum_{i=1}^n \bE[|\xi_i|^3]}{2} \Big)\Big]  \notag \\
& \hspace{4cm} + \sum_{i=1}^n \Big|\bEH[\bar{\xi}_i]\Big| \bEH\Bigg[ e^{ \barS^{(i)}/2}\Bigg(  |\bar{\Delta}_x^{(i)}|+ \frac{1}{2}\Bigg)\Bigg] 
 \Biggr\} \label{first_app_randomized_ineq},
\end{align}
where we have also used the fact that $\beta_2 + \beta_3 \leq \min (1 , \sum_{i=1}^n \bEH[|\xi_i|^3])$, for $\beta_2 \equiv \sum_{i=1}^n \bEH[\xi_i^2 I(|\xi_i|>1)]$ and $\beta_3 \equiv \sum_{i=1}^n \bEH[\xi_i^3 I(|\xi_i|\leq1)]$. 
Next, we  control the different terms figuring in \eqref{first_app_randomized_ineq}:
\begin{enumerate}
\item  
 $\bEH[e^{ \barS }] \leq C$  by Bennett's inequality (\lemref{bennett_censored});
 \item By Cauchy's inequality, 
\begin{align*}
&\bEH \Big[|\barxi_i | e^{\barS^{(i)}/2 }  \Big|\bar{\Delta}_x - \bar{\Delta}_x^{(i)}\Big|\Big] \\
 &\leq \| |\barxi_i | e^{\barS^{(i)}/2 }\|_{2,H} \|{\Delta} - {\Delta}^{(i)}\|_{2, H}\\
& = \|\barxi_i \|_{2, H} \|e^{\barS^{(i)}/2 }\|_{2, H} \|{\Delta} - {\Delta}^{(i)}\|_{2, H} \text{ by independence}\\
&\leq C   \|\barxi_i \|_{2, H} \|{\Delta} - {\Delta}^{(i)}\|_{2, H} \text{  by Bennett's inequality (\lemref{bennett_censored})} \\
&\leq  C   \|\xi_i \|_{2, H} \|{\Delta} - {\Delta}^{(i)}\|_{2, H} 
\end{align*}

\item Since $\barS/2 \leq e^{|\barS/2|} \leq e^{\barS/2} + e^{- \barS/2}$,
\begin{align*}
 &\bEH\Big[|\barS| e^{ \barS/2 } \Big(|\bar{\Delta}_x| + \frac{\sum_{i=1}^n \bEH[|\xi_i|^3]}{2} \Big)\Big]\\
  &\leq 2  \bEH\Big[ (1 + e^{\barS})\Big(|\bar{\Delta}_x| +  \frac{\sum_{i=1}^n \bEH[|\xi_i|^3]}{2} \Big)\Big] \\
 &\leq C (\|\Delta\|_{2, H} +  \sum_{i=1}^n \bEH[|\xi_i|^3] )  \text{  by Bennett's inequality (\lemref{bennett_censored})}
\end{align*}
\item Since $x \geq 1$, 
\begin{align*}
&\sum_{i=1}^n \Big|\bEH[\bar{\xi}_i]\Big| \bEH\Bigg[ e^{ \barS^{(i)}/2}\Bigg(  |\bar{\Delta}_x^{(i)}|+ \frac{1}{2}\Bigg)\Bigg] \\
&\leq  \sum_{i=1}^n |\bEH[\bar{\xi}_i]|  (\| \Delta^{(i)}\|_{2, H} + C) \text{  by Bennett's inequality (\lemref{bennett_censored})} \\
&\leq  \max_{1 \leq i \leq n}\| \Delta^{(i)}\|_{2, H} + C \sum_{i=1}^n \bEH[|\xi_i|^3] \text{ by \lemref{exp_xi_bi_bdd}}
\end{align*}
\end{enumerate}
With these bounds we can continue from \eqref{first_app_randomized_ineq} and get
\begin{equation*}
P_H(x - |\bar{\Delta}_x| \leq \barS \leq x  ) \leq C e^{-x/6} \Bigg(\sum_{i=1}^n \bEH[|\xi_i|^3] + \sum_{i=1}^n \|\xi_i\|_{2, H} \|\Delta  - \Delta^{(i)}\|_{2,H} + \|\Delta\|_{2, H} + \max_{1 \leq i \leq n} \|\Delta^{(i)}\|_{2, H}\Bigg).
\end{equation*}
With a similar and analogous argument,  one can also prove the same bound for $P(x \leq \barS \leq x +  |\bar{\Delta}_x|   )$, and hence we conclude
\begin{multline*}
P(x - |\bar{\Delta}_x| \leq \barS \leq x  ) \vee P(x \leq \barS \leq x +  |\bar{\Delta}_x|   ) \\
\leq C e^{-x/6} \Big(\sum_{i=1}^n \bEH[|\xi_i|^3] + \sum_{i=1}^n \|\xi_i\|_{2, H} \|\Delta  - \Delta^{(i)}\|_{2, H} + \|\Delta\|_{2, H} + \max_{1 \leq i \leq n} \|\Delta^{(i)}\|_{2, H}\Big).
\end{multline*}

\subsubsection{Bound on $P_H(S \geq x/3,  \max_{1 \leq i \leq n} |\xi_i| > 1)  $} \label{sec:3rd_little_bdd_nonunif} 
By \lemref{special_bound}, since $ \sum_{i=1}^n \bEH[\xi_i^2 I(|\xi_i|>1)]
 \leq \sum_{i=1}^n \bEH[\xi_i^3 ]$ and using the fact that $x \geq 1$, we have 
\[
P_H\Big(S \geq x/3,  \max_{1 \leq i \leq n} |\xi_i| > 1\Big)   \leq C \sum_{i=1}^n \bEH[|\xi_i|^3].
\]
Lastly, with the 3 bounds  in \secsref{1st_little_bdd_nonunif},  \secssref{2nd_little_bdd_nonunif} and \secssref{3rd_little_bdd_nonunif}, from \eqref{chopping_Delta} we get 
\begin{multline}\label{pre_lem_nonunif}
\Bigg|P_H\Big( \frac{\sqrt{n}}{m \sigma_g} U_n  \leq x \Big) -\Phi(x)\Bigg| \leq P_H\left(|\Delta| >  \frac{x +1}{3} \right) + C_1\sum_{i=1}^n \bE_H[|\xi_i|^3] + \\
\hspace{1.5cm} C_2 e^{-x/6} \Big( \sum_{i=1}^n \|\xi_i\|_{2, H} \|\Delta  - \Delta^{(i)}\|_{2, H} + \|\Delta\|_{2, H} + \max_{1 \leq i \leq n} \|\Delta^{(i)}\|_{2, H}\Big).
\end{multline}
By routine U-statistic calculations, one can show that
\[
 \|\Delta  - \Delta^{(i)}\|_{2, H} \leq \frac{\sqrt{2}(m-1) \sigma_h}{ \sqrt{nm(n-m+1)}\sigma_g}, 
 \quad \|\Delta\|_{2, H} \leq \frac{(m-1) \sigma_h}{ \sqrt{m (n-m+1)}\sigma_g}
\]
and
\[
\max_{1 \leq i \leq n} \|\Delta^{(i)}\|_{2, H} \leq 
\frac{\sqrt{n-1}}{\sqrt{n}}\frac{{n -2 \choose m -1}}{{n -1 \choose m -1}}\frac{ (m-1)\sigma_h}{ \sqrt{m(n-m)} \sigma_g} = 
%\frac{\sqrt{n-1}}{\sqrt{n}} \frac{n-m}{n-1} \frac{ (m-1)\sigma_h}{ \sqrt{m(n-m)} \sigma_g}=
 \frac{\sqrt{n-m}}{\sqrt{n-1}} \frac{ (m-1)\sigma_h}{ \sqrt{nm} \sigma_g}
\]
see  \citet[Lemma  10,1, p.262]{chen2010normal} for instance. Moreover, recall the definition of $\xi_i$ in \eqref{xi_Delta_def}, we have 
\[
\|\xi\|_{2, H} = n^{-1/2} \quad \text{ and } \quad \bEH[|\xi_i|^3] = \frac{\bEH[|g|^3]}{n^{3/2}\sigma_g^3}.
\]
Plugging all these bounds and equalities into \eqref{pre_lem_nonunif} we have proved \lemref{nonunif_BE}.
 
\end{proof}

\subsection{Proof of \thmref{main}$(ii)$, B-E bound for $U_{n, N}'$ when $\sigma_g = 0$} \label{app:pf_degenerate_case}

% It suffices to prove the following B-E bound for  $\sqrt{n}W_n/\sigma$, which, together with the general bound in \eqref{general_BE_bdd_for_rescaled_icu_stat}, can readily establish \thmref{main}$(ii)$:
%\begin{multline} \label{BE_bdd_W_degenerate}
% \Bigg| P\Big(\frac{\sqrt{n} W_n}{\sigma} \leq z\Big) - \Phi(z) \Bigg| \leq \frac{C_1(m)}{\sqrt{n}}+   \frac{C_2(m) \sqrt{N}(\log (2n^m +1 ))^{3m}}{n} \\ \frac{C_3}{\sqrt{N}} \frac{\sqrt{1 - 3p_{n, N} + 3p_{n,N}^2}}{1 - p_{n, N}} 
% + 6 \exp \Big(  -  \frac{  n^{1 -2 \eta}}{C_4(m) }\Big)  + C_5 \sqrt{\frac{p_{n, N}}{1 - p_{n, N}}}
%\end{multline}
%To prove \eqref{BE_bdd_W_degenerate}, 
It suffices to assume $z \geq 0$, otherwise one can replace the kernel $h$ with $-h$. Moreover, note that $\sigma = \sqrt{\alpha_n} \sigma_h$ because $\sigma_g = 0$ by singularity.
%Let $\alpha_n = n/N$; from \eqref{CK_decomposition}, for any $z \in \bR$ we have
%\begin{align*}
%P(\sqrt{n} W_n \leq z ) & = E\left[ P\left( \sqrt{N} B_n \leq \frac{z}{\sqrt{ \alpha_n (1- p_{n, N})}} -  \sqrt{\frac{N}{1 - p}} U_n  \Bigg| \{{\bf X}_{\bf i}\}_{{\bf i} \in I_{n, m}}\right)\right].\\
%&\leq P\left( Y_B \leq \frac{z}{\sqrt{\alpha (1- p)}}- \sqrt{\frac{N}{1 - p}} U_n \right) + \epsilon_1\\
%&= P\left( Y_B \leq \frac{z}{\sqrt{\alpha (1- p)}}- \sqrt{\frac{N}{1 - p}} A_n \right) + \epsilon_1\\
%&= P\left( \sqrt{n} A_n \leq z - \sqrt{ \alpha (1- p) } Y_B \right)+ \epsilon_1\\
%&= E\left[P\left( \sqrt{n} A_n \leq z - \sqrt{ \alpha  (1- p) } Y_B |Y_B  \right) \right] + \epsilon_1\\
%& \leq  P\left( Y_A\leq z - \sqrt{\alpha (1- p) } Y_B \right) + \epsilon_1 + \epsilon_2
%\end{align*}
 From \eqref{Wn_decomp} we can write the chain of  inequalities
\begin{align}
&P_H\Big(\frac{\sqrt{n} W_n}{\sigma} \leq z\Big) = P_H\Bigg(\frac{\sqrt{N} W_n}{  \sigma_h}  \leq z\Bigg) \notag \\
&\leq  P_H(\sqrt{N} |U_n| > \sigma_h t)+ P_H\Bigg(  \sqrt{N} B_n  \leq   \frac{\sigma_h (z + t)}{\sqrt{1 - p_{n, N}}} \Bigg) \notag\\
&\leq P_H(\sqrt{N} |U_n| > \sigma_h t) + \sup_{z' \in \bR}| P_H(  \sqrt{N} B_n \sigma_h^{-1} \leq  z' ) -  \Phi(z')| + \Phi\left(  \frac{z + t}{\sqrt{1 - p_{n, N}}} \right) \notag\\
 &\leq \sup_{z' \in \bR}| P_H(  \sqrt{N} B_n \sigma_h^{-1} \leq  z' ) -  \Phi(z')| +  \Phi\left(  \frac{z }{\sqrt{1 - p_{n, N}}} \right) + \epsilon_1(t)\notag\\
 &\leq  \sup_{z' \in \bR}| P_H(  \sqrt{N} B_n \sigma_h^{-1} \leq  z' ) -  \Phi(z')| + \Phi(z)+  \epsilon_1(t) + \epsilon_2  \text{ for any } t \geq 0, \label{1st_side_of_BE_bdd}
\end{align}
where the error terms are defined as 
\begin{align*}
&\epsilon_1(t) = P_H(\sqrt{N} |U_n| > \sigma_h t) + \frac{t}{\sqrt{(2\pi)(1 - p_{n, N})}},  \\
%&\epsilon_1 =
%  \sup_{z'\in \bR}  \Bigg| P\Bigg(  \frac{ \sqrt{N} B_n }{\sigma_h } \leq   \frac{z'}{\sqrt{1 - p_{n, N}}} \Bigg) -  \Phi\Bigg(\frac{z'}{\sqrt{1 - p_{n, N}}}\Bigg) \Bigg|  \text{ and } \\
& \epsilon_2 = \Bigg|\Phi\left(  \frac{z }{\sqrt{1 - p_{n, N}}} \right) - \Phi(z)\Bigg|,  
\end{align*}
with the dependence on $t$ made explicit. By a reverse argument, one can analogously show that 
\begin{equation*}
P_H\Bigg(\frac{\sqrt{n} W_n}{ \sqrt{\alpha_n} \sigma_h}  \leq z \Bigg) 
%P\Bigg(  \sqrt{N} B_n  \leq   \frac{\sigma_h (z - t)}{\sqrt{1 - p_{n, N}}} \Bigg)  - \epsilon_0(t) \geq  \Phi\Bigg(\frac{z- t}{\sqrt{1 - p_{n, N}}}\Bigg) - \epsilon_0(t) - \epsilon_1\\
%\geq \Phi\Bigg(\frac{z}{\sqrt{1 - p_{n, N}}}\Bigg) - \epsilon_0(t) - \epsilon_1 - \frac{t}{\sqrt{(2 \pi)(1 - p_{n,N})}} \\
\geq  \Phi (z) - \sup_{z' \in \bR}| P_H(  \sqrt{N} B_n \sigma_h^{-1} \leq  z' ) -  \Phi(z')|  - \epsilon_1(t) - \epsilon_2  ;
\end{equation*}
%\[
%\Phi\left( z \right) - \epsilon_0 (t) - \epsilon_1  - \epsilon_2 \leq  P(\frac{\sqrt{n} W_n}{ \sqrt{\alpha_n} \sigma_h}  \leq z) (to be amended);
%\] 
together with \eqref{1st_side_of_BE_bdd} it implies
\begin{equation} \label{BE_bdd_for_Wn_in_t}
\Bigg|P_H\Bigg(\frac{\sqrt{n} W_n}{ \sqrt{\alpha_n} \sigma_h}  \leq z\Bigg)  - \Phi(z) \Bigg| \leq \sup_{z' \in \bR}| P_H(  \sqrt{N} B_n \sigma_h^{-1} \leq  z' ) -  \Phi(z')|   + \epsilon_1(t) + \epsilon_2 \text{ for any } t \geq 0. 
\end{equation}
Combining \eqref{general_BE_bdd_for_rescaled_icu_stat},  \lemref{BE_bdd_Bn} and \eqref{BE_bdd_for_Wn_in_t}, to finish proving  \thmref{main}$(ii)$, it suffices to let $\mathfrak{R} = \epsilon_1(t_m) + \epsilon_2$ for
\[
t_m= C(m)\frac{\sqrt{N}}{n^{d/2}}(\log (2n^m +1 ))^{3m},
\]
and form a bound for $\mathfrak{R} $ by  bounding $\epsilon_1(t_m)$ and $\epsilon_2$ separately:
\subsubsection{Bound on $\epsilon_1(t_m)  $}   
By the  decomposition \eqref{H_decomp},  \lemref{Bernstein_ineq_h_proj}$(i)$ implies that
\begin{equation} \label{ep0t_bound}
P_H(\sqrt{N} |U_n| > \sigma_h t_m) 
\leq\sum_{r = d}^m  P_H\Bigg(\sqrt{n}|U_n^{(r)}|> \frac{\sqrt{n}\sigma_h t_m }{{m \choose r}(m-d+1)\sqrt{N}}\Bigg) 
\leq \frac{C_1(m)}{n}
%\leq  C_1(m) \exp\Bigg(-  \frac{c(m) t^2 n^d}{ N}   \Bigg)   \text{ for  } 0 \leq   t \leq C_2(m) N^{1/2}    n^{-d/2}
\end{equation}
for a large enough constant $C(m)$ in the definition of $t_m$.
Hence, we have
\begin{equation} \label{bound_on_ep0_tm}
\epsilon_1 (t_m)  \leq  \frac{C_1(m)}{n}+  \frac{C_2(m) \sqrt{N}(\log (2n^m +1 ))^{3m}}{n^{d/2}\sqrt{1 - p_{n, N}}} \leq  \frac{C(m) \sqrt{N}(\log (2n^m +1 ))^{3m}}{n \sqrt{1 - p_{n, N}}},
\end{equation}
where we have used that $d \geq 2$ by degeneracy.
\subsubsection{Bound on $\epsilon_2$}
By  Taylor expansion around $z$ and the assumption that $z \geq 0$, 
\begin{equation} \label{ep2_ultimate_bdd}
\epsilon_2 \leq     \phi(z) z  \left( \frac{1- \sqrt{1 - p_{n, N}}}{\sqrt{1 - p_{n, N}}}\right) \leq C \frac{p_{n, N}}{1 - p_{n,N} + \sqrt{1 - p_{n, n}}} \leq C \frac{p_{n, N}}{1 - p_{n, N}},
\end{equation}
where we've used $\sup_{z \geq 0}z \phi(z) \leq C$  and multiplying both the numerator and denominator by $1 + \sqrt{1 - p_{n, N}}$ in the second last inequality.

Finally,  collecting \eqref{bound_on_ep0_tm} and \eqref{ep2_ultimate_bdd}, we get that 
\[
\mathfrak{R} = \epsilon_1(t_m) + \epsilon_2 \leq \frac{C(m) \sqrt{N}(\log (2n^m +1 ))^{3m}}{n \sqrt{1 - p_{n, N}}} +    \frac{Cp_{n, N}}{1- p_{n, N}}
\]
as claimed by \thmref{main}$(ii)$.

 \section{Additional proofs for \secref{Bernstein_pf}} \label{app:sec_claim_pf}
 
 \subsection{Proof of \thmref{sharp_ustat_moment}, sharp moment inequality for canonical generalized decoupled U-statistics} \label{app:pf_moment_ineq_decouple_u_stat}
Directly from \citet[Theorem 6, p.21]{adamczak2006moment}, we have the inequality 
 \[
 \bE\bigg[\Big|\sum_{{\bf i} \in [n]^r} \kappa_{\bf i}\Big|^p \bigg] \leq \cK_r^p \sum_{I \subset [r]} \sum_{\cJ \in \cP_I} p^{p(|I^c|+ |\cJ|/2)}
\bE_{I^c}\Big[ \max_{{\bf i}_{I^c}} \| (\kappa_{\bf i})_{{\bf i}_I }\|^p_{\cJ} \Big];
 \]
hence, to prove \thmref{sharp_ustat_moment}, it remains to show that for a given $I \subset [r]$ and  partition $\cJ \in \cP_I$, 
\begin{equation}\label{op_norm_bdd}  
\| (\kappa_{\bf i})_{{\bf i}_I }\|_{\cJ}\leq   \sqrt{ \bE_I \Bigg[\sum_{{\bf i}_I } \kappa_{\bf i}^2\Bigg]}.
\end{equation}
For $k = |\cJ|$,  let  $( f_{{\bf i}_{J_1}}^{(1)}  ( \cdot )  )_{{\bf i}_{J_1} \in [n]^{|J_1|}}, \dots,
( f_{{\bf i}_{J_k}}^{(k)}  ( \cdot )  )_{{\bf i}_{J_k} \in [n]^{|J_k|}}$ be sequences of functions such that 
%\[
%\Big( f_{{\bf i}_{J_1}}^{(1)}  \Big( (Y^{(l)}_{i_l})_{l \in J_1}\Big)  \Big)_{{\bf i}_{J_1} \in [n]^{|J_1|}}, \dots, \Big( f_{{\bf i}_{J_1}}^{(k)}  \Big( (Y^{(l)}_{i_l})_{l \in J_k}\Big)  \Big)_{{\bf i}_{J_k} \in [n]^{|J_k|}}
%\]
% be such that
\begin{equation}\label{Hibert_norm_less_than_1}
\bE 
\Bigg[
\sum_{{\bf i}_{J_j} \in [n]^{|J_j|}} \Big|  f_{{\bf i}_{J_j}}^{(j)}  \Big( (Y^{(l)}_{i_l})_{l \in J_j}\Big)
 \Big|^2
\Bigg] \leq 1 \text{ for each } j = 1, \dots, k.
\end{equation}
One can first rewrite
\begin{align}
 &\bE_I\Bigg[  \sum_{{\bf i}_I}
\kappa_{\bf i}
\prod_{j=1}^k  f_{{\bf i}_{J_j}}^{(j)}  \Big( (Y^{(l)}_{i_l})_{l \in J_j}\Big)
\Bigg] \notag\\
&= \bE_{J_1}  \Bigg[ \sum_{{\bf i}_{J_1} }   f_{{\bf i}_{J_1}}^{(1)}  \Big( Y^{(J_1)}_{{\bf i}_{J_1}}\Big)  \cdots \bE_{J_{k-1}} \Bigg[ \sum_{{\bf i}_{J_{k-1}} }   f_{{\bf i}_{J_{k-1}}}^{(k-1)}  \Big( Y^{(J_{k-1})}_{{\bf i}_{J_{k-1}}}\Big)  \bE_{J_k} \Bigg[ \sum_{{\bf i}_{J_k} }   f_{{\bf i}_{J_k}}^{(k)}  \Big( Y^{(J_k)}_{{\bf i}_{J_k}}\Big)
 \kappa_{\bf i} \Bigg] \Bigg] \cdots \Bigg] \notag \\
 &= \bE_{J_1}  \Bigg[ \sum_{{\bf i}_{J_1} }   f_{{\bf i}_{J_1}}^{(1)}  \Big( Y^{(J_1)}_{{\bf i}_{J_1}}\Big)   \cdots
\bE_{J_t \cup \cdots \cup J_k}
\Bigg[
\sum_{{\bf i}_{J_t \cup \cdots \cup J_k}}
\kappa_{\bf i}
\prod_{j=t}^k  f_{{\bf i}_{J_j}}^{(j)}  \Big( Y^{(J_j)}_{{\bf i}_{J_j}}\Big)
\Bigg] \cdots
 \Bigg] \text{ for any } 1 < t  \leq k \label{alternative_layered_sum}
\end{align}
%which implies
%\begin{align*}
% &\Bigg|\sum_{{\bf i}_I}\bE_I\Bigg[ 
%h_{\bf i} (Y_{i_1}^{(1)}, \dots, Y_{i_m}^{(m)})
%\prod_{j=1}^{\text{deg}(\cJ)}  f_{{\bf i}_{J_j}}^{(j)}  \Big( (Y^{(l)}_{i_l})_{l \in J_j}\Big)
%\Bigg] \Bigg|\\
%&\leq \bE_{J_1}  \Bigg[ \hspace{0.5mm}\Bigg| \sum_{{\bf i}_{J_1} }   f_{{\bf i}_{J_1}}^{(1)}  \Big( Y^{(J_1)}_{{\bf i}_{J_1}}\Big)  \cdots \bE_{J_{k-1}} \Bigg[\hspace{0.5mm} \Bigg| \sum_{{\bf i}_{J_{k-1}} }   f_{{\bf i}_{J_{k-1}}}^{(k-1)}  \Big( Y^{(J_{k-1})}_{{\bf i}_{J_{k-1}}}\Big) 
% \bE_{J_k} \Bigg[ \hspace{0.5mm}\Bigg|\sum_{{\bf i}_{J_k} }   f_{{\bf i}_{J_k}}^{(k)}  \Big( Y^{(J_k)}_{{\bf i}_{J_k}}\Big)
% \frakh_{\bf i} \Bigg|\hspace{0.5mm}
%  \Bigg]
%   \Bigg|\hspace{0.5mm} \Bigg] \cdots \Bigg| \hspace{0.5mm}\Bigg]
%\end{align*}
Then, we will form bounds for  the layers of expected values taken with respect to $\bE_{J_1}, \dots, \bE_{J_k}$ in a backward inductive manner. First, by Cauchy's inequality and \eqref{Hibert_norm_less_than_1},
\[
 \Bigg| \bE_{J_k} \Bigg[\sum_{{\bf i}_{J_k} }   f_{{\bf i}_{J_k}}^{(k)}  \Big( Y^{(J_k)}_{{\bf i}_{J_k}}\Big)
 \kappa_{\bf i}   \Bigg] \Bigg| \leq  \sqrt{\bE_{J_k} [\sum_{{\bf i}_{J_k} } \kappa_{\bf i}^2]} \sqrt{ \bE_{J_k}\big[ \sum_{{\bf i}_{J_k} }  |f_{{\bf i}_{J_k}}^{(k)}  ( Y^{(J_k)}_{{\bf i}_{J_k}})|^2] } \leq \sqrt{ \bE_{J_k} [\sum_{{\bf i}_{J_k} } \kappa_{\bf i}^2]}.
 \]
Now, assume that for a given $1< t \leq k$, one has shown that
\begin{equation} \label{induction_hypothesis}
\Bigg|\bE_{J_t \cup \cdots \cup J_k}
\Bigg[
\sum_{{\bf i}_{J_t \cup \cdots \cup J_k}}
\kappa_{\bf i}
\prod_{j=t}^k  f_{{\bf i}_{J_j}}^{(j)}  \Big( 
Y^{(J_j)}_{{\bf i}_{J_j}}
\Big)
\Bigg]\Bigg|
 \leq 
 \sqrt{ \bE_{J_t \cup \cdots \cup J_k} \Bigg[\sum_{{\bf i}_{J_t \cup \cdots \cup J_k} } \kappa_{\bf i}^2\Bigg]}. 
\end{equation}
Since 
\begin{multline*}
\bE_{J_{t-1} \cup \cdots \cup J_k}
\Bigg[
\sum_{{\bf i}_{J_{t-1} \cup \cdots \cup J_k}}
\kappa_{\bf i}
\prod_{j=t-1}^k  f_{{\bf i}_{J_j}}^{(j)}  \Big(
Y^{(J_j)}_{{\bf i}_{J_j}}
 \Big)
\Bigg]\\
= 
\bE_{J_{t-1}}
\Bigg[ \sum_{{\bf i}_{J_{t-1}}} 
\Bigg\{
 f_{{\bf i}_{J_{t-1}}}^{(t-1)}  
\Big( Y^{(J_{t-1})}_{{\bf i}_{J_{t-1}}}\Big)
\bE_{J_t \cup \cdots \cup J_k}
\Bigg[
\sum_{{\bf i}_{J_t \cup \cdots \cup J_k}}
\kappa_{\bf i}
\prod_{j=t}^k 
 f_{{\bf i}_{J_j}}^{(j)}  
\Big( Y^{(J_j)}_{{\bf i}_{J_j}}\Big)
\Bigg] \Bigg\}
\Bigg],
\end{multline*}
by applying Cauchy's inequality with \eqref{induction_hypothesis} and \eqref{Hibert_norm_less_than_1}, we get
\begin{equation} \label{induction_proved}
\Bigg|\bE_{J_{t-1} \cup \cdots \cup J_k}
\Bigg[
\sum_{{\bf i}_{J_{t-1} \cup \cdots \cup J_k}}
\kappa_{\bf i}
\prod_{j=t-1}^k  f_{{\bf i}_{J_j}}^{(j)}  \Big(
Y^{(J_j)}_{{\bf i}_{J_j}}
 \Big)
\Bigg]\Bigg| 
%\leq \sqrt{\bE\Bigg[
%\sum_{{\bf i}_{J_{t-1}}}\Big| f_{{\bf i}_{J_{t-1}}}^{(t-1)}  
%\Big( Y^{(J_{t-1})}_{{\bf i}_{J_{t-1}}}\Big)\Big|^2
%\Bigg]}
% \sqrt{ \bE_{J_{t-1} \cup \cdots \cup J_k} \Bigg[\sum_{{\bf i}_{J_{t-1} \cup \cdots \cup J_k} } \frakh_{\bf i}^2\Bigg]} \\
 \leq
 \sqrt{ \bE_{J_{t-1} \cup \cdots \cup J_k} \Bigg[\sum_{{\bf i}_{J_{t-1} \cup \cdots \cup J_k} } \kappa_{\bf i}^2\Bigg]}.
\end{equation}
Since the induction is proved in \eqref{induction_proved}, by expression \eqref{alternative_layered_sum} one have shown 
\[
\Bigg|\bE_I\Bigg[  \sum_{{\bf i}_I}
\kappa_{\bf i}
\prod_{j=1}^k  f_{{\bf i}_{J_j}}^{(j)}  \Big( (Y^{(l)}_{i_l})_{l \in J_j}\Big)
\Bigg]  \Bigg| \leq  \sqrt{ \bE_I \Bigg[\sum_{{\bf i}_I } \kappa_{\bf i}^2\Bigg]},
\]
which implies
\eqref{op_norm_bdd} in light of \defref{opnorm}.

\subsection{Remaining proof for \lemref{Bernstein_ineq_h_proj}} \label{app:remaining_pf_Bernstein_ineq_h_proj}

It remains to prove \clmref{claim2}. In  light of how the $M_I$'s are defined in \eqref{MIJ_def_2}, \eqref{sub_Gaussian_condition} precisely happens when, for any pair $I \subset [r]$ and $\cJ \in \cP_I$ such that $(I, \cJ) \neq ([r], \{[r]\})$, 
\begin{align}
x &\leq \cC_r  \cK_r e \Bigg[ \frac{ M_{[r]}^2}{M_{I}^{1/(2 |I^c| + |\cJ|/2)}} \Bigg]^{\frac{1}{2 - (2 |I^c| + |\cJ|/2)^{-1}}}\notag\\
 &= \cC_r \cK_r  e  \cdot  \sigma_h  \cdot \frac{n^{ \frac{ (2 r + \frac{1}{2}) |I^c|  + \frac{r}{2} |\cJ| - \frac{r}{2}}{ 4 |I^c| + |\cJ| -1 } }}{ \Big(\log (2 n^{|I^c|} +1) \Big)^\frac{|I^c|}{4 |I^c| + |\cJ| - 1} } \label{x_less_than_quantity},
  \end{align}
  where we have used the fact that 
  \[
 \Big(n^{r - \frac{|I|}{2(2 |I^c| + |\cJ|/2)}} \Big)^{\frac{1}{2 - (2 |I^c| + |\cJ|/2)^{-1}}}
= n^{ \frac{ (2 r + \frac{1}{2}) |I^c|  + \frac{r}{2} |\cJ| - \frac{r}{2}}{ 4 |I^c| + |\cJ| -1 } }.
\]
In particular, the right hand side on \eqref{x_less_than_quantity} is always equal to
\begin{equation} \label{max_x_bdd_when_I_is_1_to_m}
\cC_r \cdot \cK_r \cdot e \cdot  \sigma_h  \cdot n^{r/2} \text{ when } I = [r],
\end{equation}
because $|I^c| = 0$ in this case. 
We claim that the right hand side on \eqref{x_less_than_quantity} can be no smaller than
\begin{equation} \label{max_x_bdd_when_I_is_not_1_to_m}
\cC_r \cdot \cK_r \cdot e \cdot   \sigma_h  \cdot \frac{n^{\frac{(r+1)^2}{2(r+2)}}}{(\log (2n^r +1))^{\frac{r}{4r - 1}}} \text{ for any } I \text{ other than } [r];
\end{equation}
if this claim is true, because $n^{\frac{(r+1)^2}{2(r+2)}} / n^{\frac{r}{2}} = n^{\frac{1}{2(r+2)}}$ for all $r \geq 1$, considering \eqref{x_less_than_quantity}  and the upper limit for $x$ in \eqref{max_x_bdd_when_I_is_1_to_m} when $I = [r]$, we will have proven  \clmref{claim2}. Hence, it suffices to establish the claimed lower bound  in \eqref{max_x_bdd_when_I_is_not_1_to_m} for the right hand side of \eqref{x_less_than_quantity}, when $I \neq [r]$. 

First we define the functions 
\[
\mathtt{f}( |I^c| , |\cJ| ) =  \frac{ (2 r + \frac{1}{2}) |I^c|  + \frac{r}{2} |\cJ| - \frac{r}{2}}{ 4 |I^c| + |\cJ| -1 } ,
\]
and 
\[
\mathtt{g}(|I^c|, |\cJ|) = \frac{|I^c|}{4 |I^c| + |\cJ| - 1},
\]
so that the general expression in \eqref{x_less_than_quantity} can be written as 
\begin{equation} \label{x_less_than_quantity_exp}
\cC_r \cK_r  e  \cdot \sigma_h \cdot \frac{n^{\mathtt{f}( |I^c| , |\cJ| )}}{(\log (2 n^{|I^c|} +1))^{\mathtt{g}(|I^c|, |\cJ|)}}.
\end{equation}
Next we will respectively find the \emph{minimum attainable value} of $\mathtt{f}( |I^c| , |\cJ| )$ and the \emph{maximum attainable value} of $\mathtt{g}( |I^c| , |\cJ| )$ for $I \neq [r]$ and $\cJ \in \cP_I$:

\begin{enumerate}
\item Minimum attainable value of $\mathtt{f}( |I^c| , |\cJ| )$: By taking partial derivatives, we have 
\[
\frac{\partial \mathtt{f}}{\partial |I^c|} = \frac{\frac{1}{2}( |\cJ|- 1) }{( 4 |I^c| + |\cJ| -1 )^2} \geq 0
\]
and
\[
\frac{\partial \mathtt{f} }{\partial |\cJ|} = \frac{ - \frac{1}{2} |I^c| }{( 4 |I^c| + |\cJ| -1 )^2} \leq 0.
\]
Hence, for $\mathtt{f}$ to attain its minimum value, $|I^c|$ should be as small as possible at $|I^c| = 1$ and $|\cJ|$ should be as large as possible at $|\cJ| = r-1$, which gives 
\[
\mathtt{f}(1, r-1) = \frac{2 r + \frac{1}{2} + \frac{r}{2}(r-1) - \frac{r}{2}}{ 4 + r-2} = \frac{\frac{r^2}{2} + r  + \frac{1}{2}}{2 +r}
=  \frac{(r+1)^2}{2(r+2)}.
\]

\item Maximum attainable value of $\mathtt{g}( |I^c| , |\cJ| )$: For any non-empty $I^c \neq [r]$ (or equivalently, $I \neq \emptyset$), the largest $\mathtt{g}( |I^c| , |\cJ| )$ can be is $\frac{|I^c|}{4 |I^c|} = \frac{1}{4}$ since the smallest $|\cJ|$ can be is $1$. When $I^c = [r]$, then $|\cJ| = |\emptyset|$ must be $0$, in which case we have 
\[
\mathtt{g}(r, 0) =  \frac{r}{4r - 1},
\]
which is larger than $1/4$ and hence the maximum attainable value of $\mathtt{g}( |I^c| , |\cJ| )$.
%By taking partial derivatives, we have 
%\[
%\frac{\partial \mathtt{g} }{\partial |I^c|} = \frac{4 |I^c| + |\cJ| - 1 - 4 |I^c|}{(4 |I^c| + |\cJ| - 1)^2}
%= \frac{|\cJ| - 1}{(4 |I^c| + |\cJ|- 1)^2} \geq 0.
%\]
%Hence,  $\mathtt{g}$ attains its maximum value when $I^c= [r]$ and $\cJ  = \emptyset$, which gives
%\[
%\mathtt{g}([r], 0) =  \frac{r}{4r - 1}.
%\]
\end{enumerate}
In light of the expression in \eqref{x_less_than_quantity_exp} and considering the minimum and maximum attainable values for $\mathtt{f}$  and $\mathtt{g}$ respectively when $I \neq [r]$ and $\cJ \in \cP_I$ above, we have established the lower bound in \eqref{max_x_bdd_when_I_is_not_1_to_m}.

\subsection{Remaining proof for \thmref{Bernstein_ineq}} \label{app:remain_pf_bernstein_ineq}
It remains to prove \clmsref{claim1}, which shall use the following lemmas: 

\begin{lemma} [{Sub-Weibull order for $\bE_{I, H} [ (h^{(r)}_{\bf i})^2 ]$}] \label{lem:subWeibull_our_kernel}
For a given $I \subset [r]$ and ${\bf i} = (i_1, \dots, i_r)$ such that $i_1 \neq \dots \neq i_r$, as a variable $\bE_{I, H} [ (h^{(r)}_{\bf i})^2 ]$ is
\begin{enumerate}
\item   a  polynomial with maximum possible degree $4(r - |I|)$ in jointly normal random variables, and 
 \item is sub-Weibull of maximum order $\alpha= \frac{1}{2(r- |I|)}$.
\end{enumerate}
\end{lemma}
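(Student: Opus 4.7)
The plan is to track the polynomial degree through the construction $h \rightsquigarrow \pi_r(h) \rightsquigarrow (h^{(r)}_{\bf i})^2 \rightsquigarrow \bE_{I,H}[(h^{(r)}_{\bf i})^2]$, noting that each $\hat{\theta}_{uv}(\bx) = x_u x_v$ contributes degree exactly $2$ in the single vector argument $\bx$, and then invoke \lemref{GaussPolyIsSubWeibull}.

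\textbf{Degree bookkeeping.} Inspecting \eqref{breve_H}, every summand of $\breve{h}_f(\bx_1, \dots, \bx_m)$ is a product of the form $a_{u_1v_1,\dots,u_rv_r}\,\hat{\theta}_{u_1v_1}(\bx_1)\cdots\hat{\theta}_{u_rv_r}(\bx_r)$ with $r \leq m$; in particular each vector argument $\bx_j$ appears with degree at most $2$. Averaging over permutations in \eqref{polynomial_kernel} preserves this per-argument degree bound, so $h(\bx_1, \dots, \bx_m)$ is a polynomial of degree at most $2$ in each of its $m$ arguments. The Hoeffding projection $\pi_r(h)$ in \eqref{projection_fns} is obtained by integrating out $\bx_{r+1},\dots,\bx_m$ against $P_H$ and taking signed differences in $\bx_1, \dots, \bx_r$; both operations are linear and only touch the coefficients, so $\pi_r(h)(\bx_1,\dots,\bx_r)$ remains a polynomial of degree at most $2$ in each of $\bx_1, \dots, \bx_r$, hence of total degree at most $2r$.

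\textbf{Squaring and partial integration.} Substituting the independent copies as in \eqref{frakh_def}, $h^{(r)}_{\bf i}$ is a polynomial of degree at most $2$ in each of the $r$ independent Gaussian vectors $\bX^{(1)}_{i_1}, \dots, \bX^{(r)}_{i_r}$, so $(h^{(r)}_{\bf i})^2$ has degree at most $4$ in each such vector, and total degree at most $4r$. Integrating against $\bE_{I,H}[\cdot]$ removes the variables indexed by $l \in I$ and leaves a polynomial in the remaining vectors $\{\bX^{(l)}_{i_l}\}_{l \in I^c}$; the degree in each surviving vector is still at most $4$, so the resulting polynomial has total degree at most $4|I^c| = 4(r - |I|)$. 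This gives part $(i)$.

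\textbf{Sub-Weibull conclusion.} Collecting the entries of $\{\bX^{(l)}_{i_l}\}_{l \in I^c}$ into a single jointly centered Gaussian vector (their joint distribution is Gaussian because the underlying copies are independent Gaussians), $\bE_{I,H}[(h^{(r)}_{\bf i})^2]$ is a Gaussian polynomial of degree at most $4(r - |I|)$. Applying \lemref{GaussPolyIsSubWeibull}, it is therefore sub-Weibull of order $\tfrac{2}{4(r - |I|)} = \tfrac{1}{2(r - |I|)}$, which is part $(ii)$. The main subtlety, and the only place to be careful, is the per-argument degree bound for $\pi_r(h)$: the signed-measure expansion of $\pi_r$ introduces $2^r$ terms, but each is an expectation of $h$ against independent Gaussians in the integrated-out coordinates, and such an expectation cannot inflate the degree in the remaining coordinates beyond what $h$ already carries.
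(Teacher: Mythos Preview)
Your proof is correct and follows essentially the same approach as the paper's: both track the per-argument degree of $h$ (at most $2$) through the Hoeffding projection, square to get degree at most $4$ per surviving argument, integrate out the $I$-indexed copies to arrive at total degree $4(r-|I|)$, and then invoke \lemref{GaussPolyIsSubWeibull}. Your version is slightly more explicit about \emph{why} the projection and partial integration cannot inflate degrees (linearity in the coefficients), whereas the paper instead exhibits the explicit highest-degree monomial forms, but the substance is the same.
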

\begin{proof} [Proof of \lemref{subWeibull_our_kernel}]
For $(i)$, in light of the definition of a projection kernel in \eqref{projection_fns} and the structure of $h$ defined by \eqref{breve_H} and  \eqref{polynomial_kernel}, $\pi_r (h)$ must be a polynomial in the input  $\bx_1, \dots, \bx_r$ whose possible highest degree terms stemming from $\delta_{{\bf x}_1}  \times \cdots \times \delta_{{\bf x}_r}   \times P_{\Hf}^{m-r} h$ are of the form
\[
 \hat{\theta}_{u_1 v_1}({\bf x}_1) \dots \hat{\theta}_{u_r v_r } ({\bf x}_r)
\]
when the coefficients are ignored.  Hence, $(h^{(r)}_{\bf i})^2$ is also a polynomial in the decoupled data $\bX_{i_1}^{(1)}, \dots, \bX_{i_r}^{(r)}$, with possible highest degree terms (modulo coefficient) of the form 
\[
 \hat{\theta}_{u_1 v_1}(\bX_{i_1}^{(1)}) \dots \hat{\theta}_{u_r v_r } (\bX_{i_r}^{(r)})  \hat{\theta}_{u_{r+1}v_{r+1}}(\bX_{i_1}^{(1)}) \dots \hat{\theta}_{u_{2r} v_{2r} } (\bX_{i_r}^{(r)}),
\]
which implies that  $\bE_{I, H} [ (h^{(r)}_{\bf i})^2 ]$  is also a polynomial  with possible highest degree terms (modulo coefficient) of the form
\[
\prod_{l \in I^c}  \hat{\theta}_{u_l v_l}(\bX_{i_l}^{(l)})   \hat{\theta}_{u_{l+r}v_{l+r}}(\bX_{i_l}^{(l)});
\]
the latter form implies $(i)$.  $(ii)$ is a consequence of $(i)$ in light of \lemref{GaussPolyIsSubWeibull}.
\end{proof}
\begin{lemma} [Bound on the second moment of $h^{(r)}_{[r]}$ under $H$] \label{lem:2nd_moment_bdd_hrr}
Under $H$, for a constant $C(r) > 0$,  the random quantity $h^{(r)}_{[r]} = \frac{\pi_r(h)(\bX_1^{(1)}, \dots, \bX_r^{(r)})}{r!} $ defined in \eqref{frakh_def} satisfy the  second moment bound: 
$
\bEH[(h^{(r)}_{[r]})^2] \leq C(r) \sigma_h^2 .
$
\end{lemma}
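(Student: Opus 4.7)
The plan is to leverage the orthogonality of Hoeffding projections under the $L^2(P_H)$ inner product, which yields the desired bound essentially for free.

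\textbf{Step 1: Reduction to the non-decoupled version.} Because $\{\bX_i^{(l)}\}_{1\leq i\leq n, 1\leq l \leq r}$ are independent copies of the i.i.d. sample in \eqref{iid_data_sample}, the random vector $(\bX_1^{(1)}, \dots, \bX_r^{(r)})$ has the same joint distribution under $H$ as $(\bX_1, \dots, \bX_r)$. Hence, by the definition in \eqref{frakh_def},
\[
\bEH[(h^{(r)}_{[r]})^2] = \frac{1}{(r!)^2}\, \bEH\bigl[\pi_r(h)(\bX_1, \dots, \bX_r)^2\bigr],
\]
so it suffices to bound the second moment on the right-hand side.

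\textbf{Step 2: Invoke the Hoeffding decomposition of the kernel $h$ itself.} Under $H$, the kernel satisfies $\bEH[h(\bX_1, \dots, \bX_m)] = f(\Theta) = 0$. The standard Hoeffding decomposition at the kernel level gives
\[
h(\bx_1, \dots, \bx_m) = \sum_{r=1}^m \sum_{1\leq i_1 < \cdots < i_r \leq m} \pi_r(h)(\bx_{i_1}, \dots, \bx_{i_r}).
\]
By the canonical property \eqref{canonical_property_hoeffding_proj}, the summands corresponding to distinct subsets of $[m]$ are mutually orthogonal in $L^2(P_H^m)$ (any index appearing in one summand but not another forces integration to zero). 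Therefore,
\[
\sigma_h^2 = \bEH[h^2] = \sum_{r=1}^m \binom{m}{r} \bEH\bigl[\pi_r(h)(\bX_1, \dots, \bX_r)^2\bigr].
\]

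\textbf{Step 3: Extract the bound.} Since every term in the above sum is non-negative, for each $r \in [m]$,
\[
\bEH\bigl[\pi_r(h)(\bX_1, \dots, \bX_r)^2\bigr] \leq \frac{\sigma_h^2}{\binom{m}{r}} \leq \sigma_h^2.
\]
Combining this with Step 1 yields
\[
\bEH[(h^{(r)}_{[r]})^2] \leq \frac{\sigma_h^2}{(r!)^2 \binom{m}{r}} \leq \frac{\sigma_h^2}{(r!)^2},
\]
so the conclusion holds with $C(r) = 1/(r!)^2$ (which depends only on $r$, given that $r \leq m$ makes $\binom{m}{r} \geq 1$).

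There is really no obstacle here: the only subtlety is recognizing that decoupling does not affect the marginal distribution of a single row of indices, which allows one to directly invoke the well-known orthogonality of the Hoeffding projections. Unlike \clmref{claim1}, which requires exploiting the polynomial/hypercontractive structure to handle the \emph{maximum} over ${\bf i}_{I^c}$ of a conditional second moment, here we need only a plain expectation bound on a single decoupled term.
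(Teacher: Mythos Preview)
Your proof is correct and, in fact, slightly cleaner than the paper's. The paper proceeds by recalling the explicit alternating-sum expression
\[
\bEH\bigl[\pi_r(h)(\bX_1,\dots,\bX_r)^2\bigr] = \zeta_r - \binom{r}{1}\zeta_{r-1} + \binom{r}{2}\zeta_{r-2} - \cdots + (-1)^{r-1}\binom{r}{r-1}\zeta_1,
\]
where $\zeta_k = \bEH[g_k^2]$ for $g_k(\bx_1,\dots,\bx_k)=\bEH[h(\bx_1,\dots,\bx_k,\bX_{k+1},\dots,\bX_m)]$, and then bounds the right-hand side using the monotonicity $0\leq\zeta_1\leq\cdots\leq\zeta_m=\sigma_h^2$; this yields a constant of order $2^r/(r!)^2$. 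You instead invoke the kernel-level Hoeffding decomposition and the orthogonality of its components directly, obtaining the identity $\sigma_h^2=\sum_{r=1}^m\binom{m}{r}\bEH[\pi_r(h)^2]$ and hence the sharper constant $1/\bigl((r!)^2\binom{m}{r}\bigr)\leq 1/(r!)^2$. Both arguments are standard U-statistic facts (the paper cites \citet{korolyuk2013theory}); your route is more transparent because it sidesteps the alternating signs altogether, while the paper's formula is the one more commonly stated in textbooks.
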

\begin{proof} [Proof of \lemref{2nd_moment_bdd_hrr}]
For each $k \in [m]$, let $\zeta_k \equiv \bEH[g_k^2]$ for $g_k(\bx_1, \dots, \bx_k) \equiv \bEH[h(\bx_1, \dots, \bx_k, \bX_{k+1}, \dots, \bX_m)]$. 
By classical U-statistic theory \citep[p.31-32]{korolyuk2013theory}, it is known that 
\[
\bEH\Big[\Big(\pi_r(h)(\bX_1, \dots, \bX_r)\Big)^2\Big]  = \zeta_r - {r \choose 1} \zeta_{r-1} + {r \choose 2} \zeta_{r-2}  + \cdots +(-1)^{r-1} {r \choose r-1} \zeta_1,
\]
as well as 
$
0 \leq \zeta_1 \leq \zeta_2 \leq \cdots \leq \zeta_m = \sigma_h^2$; apparently, these two facts imply $\bEH[(h^{(r)}_{[r]})^2] \leq C(r) \sigma_h^2$.

\end{proof}

We now begin to prove \clmref{claim1}. Note that the claim  is trivial if $I = [r]$, because in that case, $|I^c| = r - |I| = 0$ and using the definition in \eqref{frakh_def} as well as \lemref{2nd_moment_bdd_hrr},
\begin{multline*}
\bigg\|\max_{{\bf i}_{I^c}} \bE_{I, H} \bigg[\sum_{{\bf i}_I } (h^{(r)}_{\bf i})^2\bigg]  \bigg\|_{p/2, H} 
= \bigg\| \bEH \bigg[\sum_{{\bf i} } (h_{\bf i}^{(r)})^2\bigg]  \bigg\|_{p/2, H}  
= \bEH \bigg[\sum_{{\bf i} } (h^{(r)}_{\bf i})^2\bigg] \\
\leq n^r \bEH[(h^{(r)}_{[r]})^2] \leq C(r)  n^r \sigma_h^2 .
\end{multline*} 
For any  $I \subsetneq [r]$, i.e. $I$ is a proper subset of $[r]$, we have 
\begin{align}
 &\bigg\|\max_{{\bf i}_{I^c}} \bE_{I, H} \bigg[ \sum_{{\bf i}_I } (h^{(r)}_{\bf i})^2 \bigg]  \bigg\|_{p/2, H}  \notag\\
  &\underset{(i)}{\leq} C(r) p^{2 (r - |I|)}  \bigg\|\max_{{\bf i}_{I^c}} \bE_{I, H} \bigg[\sum_{{\bf i}_I } (h^{(r)}_{\bf i})^2\bigg]  \Bigg\|_{\psi_{\frac{1}{2(r- |I|)}}, H} \notag\\
 &\underset{(ii)}{\leq} C(r) p^{2 (r - |I|)} \psi_{\frac{1}{2(r- |I|)}}^{-1} \Bigg( 2 n^{r - |I|} \Bigg) \max_{{\bf i}_{I^c}} \Bigg\| \bE_{I,  H} \bigg[\sum_{{\bf i}_I } (h^{(r)}_{\bf i})^2\bigg] \Bigg\|_{\psi_{\frac{1}{2(r- |I|)}}, H} \notag \\
 &\underset{(iii)}{=}  C(r) p^{2 (r - |I|)} \bigg(\log ( 2 n^{r - |I|} + 1) \bigg)^{2(r - |I|)} 
 \Bigg\| \bE_{I,  H} \bigg[\sum_{{\bf i}_I } (h^{(r)}_{\bf i})^2\bigg] \Bigg\|_{\psi_{\frac{1}{2(r- |I|)}}, H}, \label{idunno}
\end{align}
where $\|\cdot\|_{\psi_\alpha, H}$ means a sub-Weibull norm defined by an expectation under $H$; the inequalities are explained as follows:
\begin{enumerate}
\item  \lemsref{max_ineq} and \lemssref{subWeibull_our_kernel}$(ii)$ imply $\max_{{\bf i}_{I^c}} \bE_{I, H} \bigg[ \sum_{{\bf i}_I } (h^{(r)}_{\bf i})^2 \bigg]$ is sub-Weibull of order $\frac{1}{2(r- |I|)}$. The inequality then follows from the first inequality in \lemref{weibull_p_norm_relation} and that a constant depending on $\frac{1}{2(r- |I|)}$ can be bounded by a further constant depending only on $r$ because $0 \leq |I| < r$;
%the first inequality comes from  \eqref{equiv_subweibull_norms} and \lemref{subWeibull_our_kernel}; note that the constant $C(r)$ comes from the fact a constant depending on $\frac{1}{2(r- |I|)}$ can be bounded by a further constant depending only on $r$ because $0 \leq |I| < r$;
\item  The maximal inequality \lemref{max_ineq}; 
\item The explicit expression $\psi^{-1}_{\frac{1}{2 (r - |I|)}} (\cdot) = (\log(\cdot + 1) )^{2 (r - |I|)}$.
\end{enumerate}
 By the second inequality in \lemref{weibull_p_norm_relation},
\begin{align}
 & \Bigg\| \bE_{I,  H} \bigg[\sum_{{\bf i}_I } (h^{(r)}_{\bf i})^2\bigg] \Bigg\|_{\psi_{\frac{1}{2(r- |I|)}}, H} \notag \\
  &\leq C(r) \sup_{p \geq 1}  p^{- 2(r - |I|)}\Bigg\| \bE_{I,  H} \bigg[\sum_{{\bf i}_I } (h^{(r)}_{\bf i})^2\bigg] \Bigg\|_{p, H} \notag \\
  &\leq C(r) \Bigg\{  \sup_{p \geq 2} \Bigg( p^{- 2(r - |I|)}  \sum_{{\bf i}_I } \Bigg\| \bE_{I, H} \bigg[(h^{(r)}_{\bf i})^2\bigg] \Bigg\|_{p, H} \Bigg) +  
  \sup_{1 \leq p \leq 2} \Bigg(p^{-2(r - |I|)}\sum_{{\bf i}_I } \Bigg\| \bE_{I, H} \bigg[(h^{(r)}_{\bf i})^2\bigg] \Bigg\|_{p, H}  \Bigg)\Bigg\} \notag\\
  &\leq  C(r) \Bigg\{  \sup_{p \geq 2} \Bigg( p^{- 2(r - |I|)}  \sum_{{\bf i}_I } \Bigg\| \bE_{I, H} \bigg[(h^{(r)}_{\bf i})^2\bigg] \Bigg\|_{p, H} \Bigg) +  \sum_{{\bf i}_I }  \Bigg\| \bE_{I, H} \bigg[(h^{(r)}_{\bf i})^2\bigg] \Bigg\|_{2, H} \Bigg\} \label{first_bdd_on_subweibull_norm}. 
\end{align}
Now for any $p \geq 2$, by  \lemref{hypercontraction} and \lemref{subWeibull_our_kernel}$(i)$, 
\begin{align}
\Bigg\| \bE_{I, H} \bigg[(h^{(r)}_{\bf i})^2\bigg] \Bigg\|_{p, H}
& \leq C(r) \times \big(p - 1\big)^{2(r - |I|)} \times \bigg\|\bEIH \big[(h^{(r)}_{\bf i})^2\big]\bigg\|_{2, H} \notag \\
& = C(r) \times  \big(p - 1\big)^{2(r - |I|)} \times \sqrt{  \bEH[ (\bEIH [(h^{(r)}_{\bf i})^2])^2]} \notag\\
&\leq C(r) p^{2(r - |I|)}  \sqrt{\bEH[ (h^{(r)}_{\bf i})^4] }  \text{ by Jensen's inequality} \notag\\
& = C(r) p^{2(r - |I|)} \|h^{(r)}_{\bf i}\|_{4, H}^2 \notag\\
& \leq C(r) 3^{2r} p^{2(r- |I|)} \|h^{(r)}_{\bf i}\|_{2, H}^2 \notag\\
& \text{ by \lemref{hypercontraction} ($h^{(r)}_{\bf i}$ is a degree-$2r$ polynomial  in normal random variables)} \notag\\
&= C(r) p^{2(r- |I|)} \bEH \big[ (h^{(r)}_{\bf i})^2 \big]  \label{pgeq2}.
\end{align}
%\[
%\|Q({\bf g})  \|_q \leq C(d) \bigg(\frac{q - 1}{p -1}\bigg)^{d/2} \| Q({\bf g}) \|_p. 
%\]
Moreover,
\begin{align}
\Bigg\| \bEIH \bigg[(h^{(r)}_{\bf i})^2 \bigg] \Bigg\|_{2, H}
& \leq  (\bEH[ (h^{(r)}_{\bf i})^4] )^{1/2} = \|h^{(r)}_{\bf i} \|_4^2  \text{ by Jensen's inequality } \notag\\
&  \ \leq C(r)\bE[(h^{(r)}_{\bf i})^2  ] \text{ by \lemref{hypercontraction}}  \label{1leqpleq2}.
\end{align}
Combining \eqref{first_bdd_on_subweibull_norm} with \eqref{pgeq2} and \eqref{1leqpleq2}, we get from \lemref{2nd_moment_bdd_hrr} that, 
\begin{equation} \label{second_bdd_on_subweibull_norm}
\Bigg\| \bE_{I,  H} \bigg[\sum_{{\bf i}_I } (h^{(r)}_{\bf i})^2\bigg] \Bigg\|_{\psi_{\frac{1}{2(r- |I|)}}, H} 
 \leq C(r) \sum_{{\bf i}_I} \bEH \big[ (h^{(r)}_{\bf i})^2 \big] \leq C(r) n^{|I|} \sigma_h^2,
\end{equation}
which implies \clmref{claim1} for $I \subsetneq [r]$ 
from \eqref{idunno}.

\bibliographystyle{plainnat}
\bibliography{refined_BE_ic_ustat_bib}

\end{document}